\title{Exploring Algorithmic Fairness in \\ Robust Graph Covering Problems}
\author{
  Aida Rahmattalabi, Phebe Vayanos, Anthony Fulginiti, \\ Eric Rice, Bryan Wilder, 
  Amulya Yadav, Milind Tambe}
\newcommand{\lb}{\boldsymbol{\ell}}
\newtheoremstyle{thm-sf}{}{}{\itshape}{}{\bfseries}{.}{ }{}
\theoremstyle{thm-sf}
\newtheorem{assumption}{Assumption}
\newtheorem{definition}{Definition}
\newtheorem{theorem}{Theorem}
\newtheorem{lemma}{Lemma}
\newtheorem{proposition}{Proposition}
\newcommand{\cmmnt}[1]{}
\newcommand{\newpv}[1]{{\color{black} #1}}
\newcommand{\newar}[1]{{\color{black} #1}}
\definecolor{Gray}{gray}{0.85}
\definecolor{LightCyan}{rgb}{0.88,1,1}
\newcolumntype{a}{>{\columncolor{Gray}}c}
\newcolumntype{b}{>{\columncolor{white}}c}
\DeclareMathOperator{\EX}{\mathbb{E}}
\date{}
\begin{document}

\maketitle
\begin{abstract}
Fueled by algorithmic advances, AI algorithms are increasingly being deployed in settings subject to unanticipated challenges with complex social effects. {Motivated by real-world deployment of AI driven}, social-network based suicide prevention and landslide risk management interventions, this paper focuses on robust graph covering problems subject to group fairness constraints. We show that, in the absence of fairness constraints, state-of-the-art algorithms for the robust {graph} covering problem result in biased node coverage: they tend to discriminate individuals (nodes) based on membership in traditionally marginalized groups. To mitigate this issue, we propose a novel formulation of the robust graph covering problem with group fairness constraints and a tractable approximation scheme applicable to real-world instances. We provide a formal analysis of the price of group fairness (PoF) for this problem, where we show that uncertainty can lead to greater PoF. We demonstrate the effectiveness of our approach on several real-world social networks. Our method yields competitive node coverage while significantly improving group fairness relative to state-of-the-art methods.
\end{abstract}

\section{Introduction}

\textbf{Motivation.} This paper considers the problem of selecting a subset of nodes (which we refer to as `monitors') in a graph that can `cover' their adjacent nodes. We are mainly motivated by settings where monitors are subject to failure and we seek to maximize worst-case node coverage. We refer to this problem as the \emph{robust graph covering}. This problem finds applications in several critical real-world domains, especially in the context of optimizing social interventions on vulnerable populations. Consider for example the problem of designing \emph{Gatekeeper training interventions for suicide prevention}, wherein a small number of individuals can be trained to identify warning signs of suicide among their peers~\cite{isaac2009gatekeeper}. A similar problem arises in the context of \emph{disaster risk management in remote communities} wherein a moderate number of individuals are recruited in advance and trained to watch out for others in case of natural hazards (e.g., in the event of a landslide~\cite{landslide}). Previous research has shown that social intervention programs of this sort hold great promise \cite{isaac2009gatekeeper,landslide}. Unfortunately, in these real-world domains, intervention agencies often have very limited resources, e.g., moderate number of social workers to conduct the intervention, small amount of funding to cover the cost of training. This makes it essential to target the right set of monitors to cover a maximum number of nodes in the network. Further, in these interventions, the performance and availability of individuals (monitors) is \emph{unknown} and \emph{unpredictable}. At the same time, robustness is desired to guarantee high coverage even in worst-case settings to make the approach suitable for deployment in the open world.

Robust graph covering problems similar to the one we consider here have been studied in the literature, for example, see~\cite{bogunovic2017robust,tzoumas2017resilient}. Yet, a major consideration distinguishes our problem from previous work: namely, the need for fairness. Indeed, when deploying interventions in the open world (especially in sensitive domains impacting life and death like the ones that motivate this work), care must be taken to ensure that algorithms do not discriminate among people with respect to protected characteristics such as race, ethnicity, disability, etc. In other words, we need to ensure that independently of their group, individuals have a high chance of being covered, a notion we refer to as \textit{group fairness}. 

\begin{table}[t] 
  \small
  \centering
  \begin{tabular}{r*{7}{c}}
    \toprule
    \centering
    \multirow{2}{*}{Network Name} & \multirow{2}{*}{Network Size} &  \multicolumn{5}{c}{Worst-case coverage of individuals by racial group (\%)} \\
    \cmidrule(lr){3-7}
     &  & White & Black & Hispanic & Mixed  & Other  \cmmnt{& Coverage} \\
    \midrule
    \texttt{SPY1} & 95 & 70 & \textbf{36} & -- & 86 & 94 \\
    \texttt{SPY2} & 117 & 78 & -- & \textbf{42} & 76 & 67 \\
    \texttt{SPY3} & 118 & 88 & -- & \textbf{33} & 95 & 69 \\
    \texttt{MFP1} & 165  & 96 & 77 & 69 & 73 & \textbf{28} \cmmnt{& 105} \\
    \texttt{MFP2} & 182  & \textbf{44} & 85 & 70 & 77 & 72 \cmmnt{&126} \\
  \bottomrule \\
  \end{tabular}
  \caption{Racial discrimination in node coverage resulting from applying the algorithm in \cite{tzoumas2017resilient} on real-world social networks from two homeless drop-in centers in Los Angeles, CA~\cite{barman2016sociometric}, when 1/3 of nodes (individuals) can be selected as monitors, out of which at most 10\% will fail. The numbers correspond to the worst-case percentage of covered nodes across all monitor availability scenarios.}
  \label{table:DC-Greedy-Discrimination}
\end{table}

To motivate our approach, consider deploying in the open world a state-of-the art algorithm for robust graph covering (which does not incorporate fairness considerations). Specifically, we apply the solutions provided by the algorithm from \cite{tzoumas2017resilient} on five real-world social networks. 
The results are summarized in Table~\ref{table:DC-Greedy-Discrimination} where, for each network, we report its size and the worst-case coverage by racial group. In all instances, there is significant disparity in coverage across racial groups. As an example, in network \texttt{SPY1} 36\% of Black individuals are covered in the worst-case compared to 70\% (resp.\ 86\%) of White (resp.\ Mixed race) individuals. Thus, when maximizing coverage without fairness, (near-)optimal interventions end up mirroring any differences in degree of connectedness of different groups. In particular, well-connected groups at the center of the network are more likely to be covered (protected). Motivated by the desire to support those that are the less well off, we employ ideas from \emph{maximin fairness} to improve coverage of those groups that are least likely to be protected.

\textbf{Proposed Approach and Contributions.} We investigate the \emph{robust graph covering problem with fairness constraints}. Formally, given a social network, where each node belongs to a group, we consider the problem of selecting a subset of~$I$ nodes (monitors), when at most~$J$ of them may fail. When a node is chosen as a monitor and does not fail, all of its neighbors are said to be `covered' and we use the term `coverage' to refer to the total number of covered nodes. Our objective is to maximize worst-case coverage when any $J$ nodes may fail, while ensuring fairness in coverage across groups. We adopt maximin fairness from the Rawlsian theory of justice~\cite{rawls2009theory} as our fairness criterion: we aim to maximize the utility of the groups that are worse-off. To the best of our knowledge, ours is the first paper enforcing fairness constraints in the context of graph covering subject to node failure. 

We make the following contributions: \emph{(i)} We achieve maximin group fairness by incorporating constraints inside a robust optimization model, wherein we require that at least a fraction~$W$ of each group is covered, in the worst-case; \emph{(ii)} We propose a novel two-stage robust optimization formulation of the problem for which near-optimal conservative approximations can be obtained as a moderately-sized mixed-integer linear program (MILP). By leveraging the decomposable structure of the resulting MILP, we propose a Benders' decomposition algorithm augmented with symmetry breaking to solve practical problem sizes; \emph{(iii)} We present the first study of price of group fairness (PoF), i.e., the loss in coverage due to fairness constraints in the graph covering problem subject to node failure. We provide upper bounds on the PoF for Stochastic Block Model networks, a widely studied model of networks with community structure; \emph{(iv)}~Finally, we demonstrate the effectiveness of our approach on several real-world social networks of homeless youth. Our method yields competitive node coverage while significantly improving group fairness relative to state-of-the-art methods. 

\textbf{Related Work.} Our paper relates to three streams of literature which we review.

\textit{Algorithmic Fairness.} With increase in deployments of AI, OR, and ML algorithms for decision and policy-making in the open world has come increased interest in algorithmic fairness. A large portion of this literature is focused on resource allocation systems, see e.g.,~\cite{bertsimas2011price,kleinberg1999fairness,NIPS2014_5588}. Group fairness in particular has been studied in the context of resource allocation problems~\cite{conitzer2019group,segal2018democratic,suksompong2018approximate}. 
A nascent stream of work proposes to impose fairness by means of constraints in an optimization problem, an approach we also follow. This is for example proposed in~\cite{aghaei2019learning}, and in~\cite{benabbou2018diversity,elzayn2019fair}, and in~\cite{ahmed2017diverse} for machine learning, resource allocation, and matching problems, respectively. Several authors have studied the price of fairness. In~\cite{bertsimas2011price}, the authors provide bounds for maximin fair optimization problems. Their approach is restricted to convex and compact utility sets. In~\cite{bei2019price}, the authors study price of fairness for indivisible goods with additive utility functions. In our graph covering problem, this property does not hold. Several authors have investigated notions of fairness under uncertainty, see e.g,~\cite{bateni2016fair,fish2019gaps,miyagishima2019fair,NIPS2014_5588}. These papers all assume full distributional information about the uncertain parameters and {cannot be employed} in our setting where limited data is available about node availability. Motivated by data scarcity, we take a robust optimization approach to model uncertainty which does not require distributional information. This problem is highly intractable due to the combinatorial nature of both the decision and uncertainty spaces. When fair solutions are hard to compute, ``approximately fair'' solutions have been considered~\cite{kleinberg1999fairness}. {In our work,} we adopt an approximation scheme. As such, {our approach} falls under the ``approximately fair'' category. {Recently, several} authors have emphasized the importance of fairness when conducting interventions in socially sensitive settings, see e.g.,~\cite{azizi2018designing,kube2019allocating,tsang2019group}. Our work most closely relates to~\cite{tsang2019group}, wherein the authors propose an algorithmic framework for fair influence maximization. We note that, in their work, nodes are not subject to failure and therefore their approach does not apply in our context.

\textit{Submodular Optimization.} One can view the group-fair maximum coverage problem as a multi-objective optimization problem, with the coverage of each community being a {separate objective}. In the deterministic case, this problem reduces to the multi-objective submodular optimization problem~\cite{chekuri2010dependent}, as coverage has the submodularity (diminishing returns) property. In addition, moderately sized problems of this kind can be solved optimally using integer programming technology. However, when considering uncertainty in node performance/availability, the objective function loses the submodularity property while exact techniques fail to scale to even moderate problem sizes. Thus, {existing (exact or approximate) approaches do not apply}. Our work more closely relates to the robust submodular optimization literature.  In~\cite{bogunovic2017robust,orlin2016robust}, the authors study the problem of choosing a set of up to~$I$ items, out of which~$J$ fail (which encompasses as a special case the robust graph covering problem \emph{without} fairness constraints). They propose a greedy algorithm with a constant (0.387) approximation factor, valid for $\textstyle J = o(\sqrt{I})$, and $\textstyle J = o(I)$, respectively. 
Finally, in~\cite{tzoumas2017resilient}, the authors propose another greedy algorithm with a general bound based on the curvature of the submodular function. These heuristics, although computationally efficient, are coverage-centered and do not take fairness into consideration. Thus, they may lead to discriminatory outcomes, see Table~\ref{table:DC-Greedy-Discrimination}.

\textit{Robust Optimization.}
Our solution approach closely relates to the robust optimization paradigm which is a computationally attractive framework for obtaining equivalent or conservative approximations based on duality theory, see e.g.,~\cite{ben2009robust,bertsimas2011theory,yanikouglu2019survey}. Indeed, we show that the robust graph covering problem can be written as a two-stage {robust problem} with binary second-stage decisions which is highly intractable in general~\cite{bertsimas2015design}. One stream of work proposes to restrict the functional form of the recourse decisions to functions of benign complexity~\cite{bertsimas2016multistage,bertsimas2018binary}. {Other} works rely on partitioning the uncertainty set into finite sets and applying constant decision rules on each partition~\cite{bertsimas2018binary,bertsimas2017data,hanasusanto2015k,postek2016multistage,vayanos2011decision}. The last stream of work investigates the so-called $K$-adaptability counterpart~\cite{bertsimas2010finite,chassein2018min,hanasusanto2015k,rahmattalabi2018robust,phebehanangelos}, in which $K$~candidate policies are chosen in the first stage and the best of these policies is selected \emph{after} the uncertain parameters are revealed. Our paper most closely relates {to~\cite{hanasusanto2015k,rahmattalabi2018robust}}. In~\cite{hanasusanto2015k}, the authors show that for bounded polyhedral uncertainty sets, {linear} two-stage robust optimization problem{s} can be approximately reformulated as MILPs. Paper~\cite{rahmattalabi2018robust} extends this result to a special case of discrete uncertainty sets. We prove that we can leverage this approximation to reformulate robust graph covering problem with fairness constraints \emph{exactly} for a much larger class of discrete uncertainty sets.



\section{Fair and Robust Graph Covering Problem}
\label{sec:problem_formulation}

We model a social network as a directed graph $\mathcal G = (\mathcal N, \mathcal E)$, in which $\mathcal N:=\{1,\ldots,N\}$ is the set of all nodes (individuals) and  $\mathcal E$ is the set of all edges (social ties). A directed edge from $\nu$ to $n$ exists, i.e., $(\nu, n) \in \mathcal E$, if node $n$ can be covered by $\nu$.  
We use $\delta(n):= \{\nu \in \mathcal N: (\nu, n)\in \mathcal{E}\}$ to denote the set of neighbors (friends) of $n$ in $\mathcal G$, i.e., the set of nodes that can cover node $n$. Each node $n \in \mathcal N$ is characterized by a set of attributes (protected characteristics) such as age, race, gender, etc., for which fair treatment is important. Based on these node characteristics, we partition $\mathcal N$ into $C$ disjoint groups $\mathcal N_c$, $c \in \mathcal C:=\{1,\ldots,C\}$, such that $\cup_{c\in \mathcal C} \mathcal N_c = \mathcal N$.

We consider the problem of selecting a set of $I$ {nodes} from $\mathcal N$ to act as `peer-monitors' for their neighbors, given that the availability of each node is unknown a-priori and at most $J$ nodes may fail (be unavailable). We encode the choice of monitors using a binary vector ${\bm x}$ of dimension $N$ whose $n$th element is one iff the $n$th node is chosen. We require ${\bm x} \in \mathcal X  := \{ {\bm x}\in \{0,1\}^N : {\textbf{e}^{\top}}{\bm x} \leq I \}$, where ${\textbf{{e}}}$ is a vector of all ones of appropriate dimension. Accordingly, we encode the (uncertain) node availability using a binary vector ${\bm \xi}$ of dimension $N$ whose $n$th element equals one iff node $n$ does not fail (is available). Given that data available to inform the distribution of ${\bm \xi}$ is typically scarce, we avoid making distributional assumptions on ${\bm \xi}$. Instead, we view uncertainty as deterministic and set based, in the spirit of robust optimization~\cite{ben2009robust}. Thus, we assume that ${\bm \xi}$ can take-on any value from the set $\Xi$ which is often referred to as the \emph{uncertainty set} in robust optimization. The set $\Xi$ may for example conveniently capture failure rate information. Thus, we require ${\bm \xi} \in \Xi := \{ {\bm \xi}\in \{0,1\}^N : {\textbf{e}}^\top {(\textbf{e} - \bm \xi)} \leq J\}$. A node $n$ is counted as `covered' if at least one of its neighbors is a monitor and does not fail (is available). We let ${\bm y}_n({\bm x},{\bm \xi})$ denote if $n$ is covered for the monitor choice ${\bm x}$ and node availability ${\bm \xi}$.
$$
{\bm y}_n({\bm x},{\bm \xi}):=  \textstyle \mathbb I \left( \sum_{\nu \in \delta (n)} {\bm \xi}_\nu {\bm x}_\nu \geq 1 \right).
$$
The coverage is then expressible as $F_\mathcal{G}( {\bm x}, {\bm \xi} ) := {\textbf{e}}^\top {\bm y}({\bm x},{\bm \xi})$. The \emph{robust covering problem} which aims to maximize the worst-case (minimum) coverage under node failures can be written as
\begin{equation}
\tag{$\mathcal{RC}$}
\max_{\bm x \in \mathcal X} \; \; \min_{\bm \xi \in \Xi} \; F_\mathcal{G}( {\bm x}, {\bm \xi} ).
\label{eq:robust_covering}
\end{equation}

Problem~\eqref{eq:robust_covering} ignores fairness and may result in discriminatory coverage with respect to (protected) node attributes {, see} Table~\ref{table:DC-Greedy-Discrimination}. We thus propose to augment the robust covering problem with fairness constraints. Specifically, we propose to achieve max-min fairness by imposing fairness constraints on each group's coverage: we require that {at least} a fraction~$W$ of {nodes from} each group be covered. In~\cite{tsang2019group}, the authors show that by conducting a binary search for the largest~$W$ for which fairness constraints are satisfied for all groups, the max-min fairness optimization problem is equivalent to the one with fairness constraints. Thus, we write the \emph{robust covering problem with fairness constraints} as
\begin{equation}
\tag{$\mathcal{RC}_{\text{fair}}$}
   \left\{ \max_{\bm x \in \mathcal X} \; \; \min_{\bm \xi \in \Xi} \; \sum_{c \in \mathcal C} F_{\mathcal{G},c}( {\bm x}, {\bm \xi} ) \; : \; F_{\mathcal{G},c}( {\bm x}, {\bm \xi} ) \geq W | \mathcal N_c | \quad \forall c \in \mathcal C, \; \forall {\bm \xi} \in \Xi \right\},
   \label{prob:single_stage_final}
\end{equation}
where $\textstyle F_{\mathcal{G},c}({\bm x}, {\bm \xi}) := \sum_{n \in \mathcal N_c} {\bm y}_n({\bm x}, {\bm \xi})$ is the coverage of group $c \in \mathcal C$. Note that if $|\mathcal C|=1$, Problem~\eqref{prob:single_stage_final} reduces to Problem~\eqref{eq:robust_covering}, and if $\Xi=\{{\textbf{e}} \}$, Problem~\eqref{prob:single_stage_final} reduces to the deterministic covering problem with fairness constraints. We emphasize that our approach can handle fairness with respect to more than one protected attribute by either: \emph{(a)} partitioning the network based on joint values of the protected {attributes and} imposing a max-min fairness constraint for each group; or \emph{(b)} imposing max-min fairness constraints for each protected attribute separately. Problem~\eqref{prob:single_stage_final} is computationally hard due to the combinatorial nature of both the uncertainty and decision spaces. Lemma~\ref{lem:np-hard} characterizes its complexity. Proofs of all results are in the supplementary document.
\begin{lemma}
Problem~\eqref{prob:single_stage_final} is $\mathcal{NP}$-hard.
\label{lem:np-hard}
\end{lemma}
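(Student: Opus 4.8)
The plan is to prove $\mathcal{NP}$-hardness by reduction from a known $\mathcal{NP}$-hard problem. The natural candidate is the classical \emph{maximum coverage} problem (or equivalently \emph{set cover}/\emph{dominating set}), since Problem~\eqref{prob:single_stage_final} is fundamentally a covering problem. My first step would be to reduce to the deterministic, single-group special case already identified in the excerpt: when $|\mathcal{C}|=1$ and $\Xi = \{\textbf{e}\}$, Problem~\eqref{prob:single_stage_final} collapses to selecting $I$ monitors to maximize the number of covered nodes with no failures and a vacuous fairness constraint (taking $W=0$). This is precisely maximum coverage on a graph, where choosing a node ``covers'' its neighborhood. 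Hardness of the special case immediately yields hardness of the general problem.

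Concretely, I would reduce from \textbf{dominating set} or \textbf{maximum coverage}. Given an instance of maximum coverage (a ground set $U$, a family of subsets $S_1,\ldots,S_m$, and budget $k$), I would build a bipartite-style directed graph: create one node per subset and one node per ground element, placing a directed edge $(\nu_{S_j}, n_u)\in\mathcal{E}$ whenever $u\in S_j$, set $I=k$, and ask whether $I$ monitors can cover a target number of element-nodes. A candidate monitor set in the covering problem corresponds exactly to a choice of subsets, and covered element-nodes correspond to covered ground elements. The decision version of Problem~\eqref{eq:robust_covering} (and hence \eqref{prob:single_stage_final}) is then a ``yes'' instance iff the maximum coverage instance admits $k$ sets covering the target, establishing the reduction in polynomial time.

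The main subtlety to handle carefully is the role of the fairness constraints and the uncertainty set in the reduction. Since I am reducing \emph{to} the general problem, I want the special-case instance to be a legitimate member of the problem family, so I must confirm that choosing $\Xi=\{\textbf{e}\}$ (i.e.\ $J=0$), $|\mathcal{C}|=1$, and $W=0$ are all admissible parameter settings; the excerpt already remarks that these degeneracies recover known subproblems, so this is clean. A cosmetically different and perhaps cleaner alternative is to reduce from \textbf{minimum dominating set}: setting $\Xi=\{\textbf{e}\}$ and asking whether $I$ monitors can cover all $N$ nodes asks exactly whether the graph has a dominating set of size $I$, which is $\mathcal{NP}$-complete. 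I would likely present the dominating-set reduction since it avoids any auxiliary gadget construction and directly matches the graph structure of our problem.

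The step I expect to be the only real obstacle is \emph{rigor in the embedding}, namely verifying that the reduced instance faithfully preserves both feasibility and objective value, and in particular that the fairness constraint does not accidentally change the answer. With the single-group, $W=0$ setting the constraint $F_{\mathcal{G},c}(\bm{x},\bm{\xi})\geq W|\mathcal{N}_c|=0$ is trivially satisfied, so it imposes nothing and the problem reduces to pure worst-case coverage maximization; since $\Xi$ is a singleton, the inner minimization is vacuous and we are left with deterministic coverage. Everything else is routine: the reduction is clearly polynomial in the size of the source instance, and the equivalence of optimal values follows directly from the bijection between monitor selections and set/subset selections. I would therefore conclude that even the simplest special case is $\mathcal{NP}$-hard, whence Problem~\eqref{prob:single_stage_final} is $\mathcal{NP}$-hard in general.
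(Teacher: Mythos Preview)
Your proposal is correct and takes essentially the same approach as the paper: the paper's proof simply observes that the special case $\Xi=\{\textbf{e}\}$, $C=1$, $W=0$ reduces Problem~\eqref{prob:single_stage_final} to the maximum coverage problem, which is known to be $\mathcal{NP}$-hard. Your write-up is in fact more detailed than the paper's one-line argument, but the underlying reduction is identical.
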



\section{Price of Group Fairness}\label{sec:PoF}

In Section~\ref{sec:problem_formulation}, we proposed a novel formulation of the robust covering problem incorporating fairness constraints, Problem~\eqref{prob:single_stage_final}. Unfortunately, adding fairness constraints to Problem~\eqref{eq:robust_covering} comes at a price to overall worst-case coverage. In this section, we study this {\emph{price of group fairness}.}

\begin{definition}
Given a graph $\mathcal G$, the Price of Group Fairness ${\text{\rm{PoF}}}(\mathcal G, I, J)$ {is} the ratio of the coverage loss due to fairness constraints to the maximum coverage in the absence of fairness constraints, i.e.,
\begin{equation}
{\text{\rm{PoF}}}(\mathcal G, I, J) := 1 - \frac{\text{\rm{OPT}}^{\text{\rm{fair}}}(\mathcal G, I, J)}{\text{\rm{OPT}}(\mathcal G, I, J)},
\label{equ:POF_deterministic}
\end{equation}
where ${\text{\rm{OPT}}}^{\text{\rm{fair}}}(\mathcal G, I, J)$ and ${\text{\rm{OPT}}}(\mathcal G, I, J)$ denote the optimal objective values of Problems~\eqref{prob:single_stage_final} and \eqref{eq:robust_covering}, respectively, when $I$ monitors can be chosen and at most $J$ of them may fail.
\end{definition}

In this work, we are motivated by applications related to social networks, where it has been observed that people with similar (protected) characteristics tend to interact more frequently with one another, forming friendship groups (communities). This phenomenon, known as \emph{homophily}~\cite{mcpherson2001birds}, has been observed for characteristics such as race, gender, education, etc.\cite{currarini2009economic}.
This motivates us to study the PoF in Stochastic Block Model (SBM) networks~\cite{fienberg1981categorical}, a widely accepted model for networks with community structure. In SBM networks, nodes are partitioned into $C$ disjoint communities $\mathcal N_c$, $c \in \mathcal C$. Within each community $c$, an edge between two nodes is present independently with probability $p_c^{\text{in}}$. Between a pair of communities $c$ and  $c' \in \mathcal C$, edges exist independently with probability $p_{cc'}^{\text{out}}$ and we typically have $p_c^{\text{in}} > p_{cc'}^{\text{out}}$ to capture homophily. Thus, SBM networks are very adequate models for our purpose. We assume w.l.o.g.\ that the communities are labeled such that: $|\mathcal N_1| \leq \ldots \leq |\mathcal N_C|$. 


\textbf{Deterministic Case.}
We first study the PoF in the deterministic case for which $J=0$. Lemma 2 shows that there are worst-case networks for which PoF can be arbitrarily bad.
\begin{lemma}
Given $\epsilon>0$, there exists a budget $I$ and a network $\mathcal G$ with $\textstyle N \geq \frac{4}{\epsilon} + 3$ nodes such that ${\rm{PoF}}(\mathcal G, I,0) \geq 1-\epsilon.$
\label{lem:worst-case}
\end{lemma}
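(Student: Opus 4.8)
The goal is to exhibit, for any $\epsilon > 0$, a graph on which fairness destroys nearly all the coverage. The plan is to build a worst-case instance where the two communities have drastically different covering efficiency: one group is a dense ``hub'' that a single monitor can cover almost entirely, while the other group is ``spread out'' so that covering even one of its nodes is very expensive in terms of the monitor budget. Then the unconstrained optimum concentrates all monitors on the efficient group to rack up a large coverage count, whereas the fairness constraint forces a fraction $W$ of the inefficient group to be covered, consuming so much of the budget that total coverage collapses.

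\textbf{Construction.} I would take $C = 2$ communities. Let the large community $\mathcal N_2$ be a star (or a small dense cluster) so that a single well-placed monitor covers $|\mathcal N_2|$ nodes at once, making $\mathrm{OPT}(\mathcal G, I, 0)$ of order $|\mathcal N_2|$, i.e.\ large. Let the small community $\mathcal N_1$ consist of a fixed number of nodes arranged so that each covered node of $\mathcal N_1$ requires its \emph{own} dedicated monitor (for instance, isolated edges / a matching, or nodes whose only neighbor is a distinct low-degree vertex), so that covering $W|\mathcal N_1|$ nodes of the small group consumes $W|\mathcal N_1|$ monitors. With $|\mathcal N_1|$ chosen as a small constant (on the order of $1/\epsilon$, to meet the $N \geq 4/\epsilon + 3$ bound) and with the budget $I$ set just large enough to satisfy the fairness constraint but no larger, the fairness-feasible solution is forced to spend essentially its entire budget on $\mathcal N_1$ and cannot afford to cover the cheap-but-valuable hub, so $\mathrm{OPT}^{\mathrm{fair}}(\mathcal G, I, 0)$ stays of constant order while $\mathrm{OPT}(\mathcal G, I, 0)$ is large.

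\textbf{Computing the bound.} Once the instance is fixed, I would (i) compute $\mathrm{OPT}(\mathcal G, I, 0)$ exactly by arguing the optimal unconstrained strategy places monitors on the hub-covering vertices, yielding coverage $\Theta(|\mathcal N_2|)$; (ii) compute $\mathrm{OPT}^{\mathrm{fair}}(\mathcal G, I, 0)$ by arguing that any fairness-feasible $\x$ must devote $W|\mathcal N_1|$ of its $I$ monitors to the spread-out group, leaving too few to touch the hub, so the fair coverage is bounded by a constant. Plugging into $\mathrm{PoF} = 1 - \mathrm{OPT}^{\mathrm{fair}}/\mathrm{OPT}$ and letting $|\mathcal N_2| \to \infty$ (while keeping $|\mathcal N_1|$ fixed) drives the ratio to $0$, hence $\mathrm{PoF} \to 1$; choosing the sizes so the ratio is at most $\epsilon$ gives the claimed bound, and tracking the smallest $N$ that works should yield the stated $N \geq 4/\epsilon + 3$.

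\textbf{Main obstacle.} The delicate part is not the asymptotics but the exact budget bookkeeping: I must pick $I$, $W$, and the community sizes simultaneously so that the fairness constraint is (a) feasible at all, and (b) tight enough that it genuinely starves the hub, while also hitting the precise node count $N \geq 4/\epsilon + 3$. In particular I need to verify that in the fair solution the monitors spent satisfying the $\mathcal N_1$ quota cannot do ``double duty'' by also covering hub nodes — this is why the two communities must be structurally disconnected (no cross edges, i.e.\ $p^{\mathrm{out}} = 0$ in the SBM picture), so that the homophily extreme makes the groups compete disjointly for the budget. Getting the constant in the $N$-bound exactly right, rather than merely $\Theta(1/\epsilon)$, will require a careful choice of $W$ (e.g.\ $W$ close to $1$) and an explicit small-group gadget, and that matching of the constant is where I expect most of the effort to go.
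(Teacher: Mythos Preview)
Your construction has a genuine gap that makes it give $\mathrm{PoF}\to 0$ rather than $\mathrm{PoF}\to 1$. The issue is that the fairness constraint applies to \emph{every} group, including your large hub group $\mathcal N_2$. For any $W>0$ the constraint $F_{\mathcal G,2}(\bm x,\textbf{e})\geq W|\mathcal N_2|$ forces at least one node of $\mathcal N_2$ to be covered, and since the two communities are disconnected this requires placing at least one monitor inside $\mathcal N_2$. But $\mathcal N_2$ is a star, so that single mandated monitor (placed at the center) already covers essentially all of $\mathcal N_2$. Hence any fairness-feasible solution satisfies $\mathrm{OPT}^{\mathrm{fair}}\geq |\mathcal N_2|-1$, while trivially $\mathrm{OPT}\leq N$, giving $\mathrm{PoF}\leq |\mathcal N_1|/N\to 0$ as $|\mathcal N_2|\to\infty$. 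Your sentence ``the fairness-feasible solution\ldots cannot afford to cover the cheap-but-valuable hub'' is therefore exactly backwards: fairness \emph{forces} it to cover the hub, and covering the hub is essentially free.

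The paper's construction sidesteps this by decoupling groups from clusters. It uses \emph{three} groups and a small $4$-node gadget that contains a representative of every group, including the group to which the large $(N-4)$-clique belongs. With budget $I=2$, the fairness constraint on each group (at the maximum feasible $W=1/(N-3)$) can be met by placing both monitors inside the $4$-node gadget---the clique's group gets its single required covered node from its representative in the gadget, not from the clique itself. Thus the fair optimum covers only $4$ nodes, while the unconstrained optimum (one monitor in each cluster) covers $N-3$, yielding $\mathrm{PoF}=1-4/(N-3)\geq 1-\epsilon$ once $N\geq 4/\epsilon+3$. The missing idea in your proposal is precisely this: the high-value cluster must belong to a group that also has a cheaply-coverable representative in the low-value gadget, so that fairness can be satisfied \emph{without} touching the high-value cluster.
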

Fortunately, as we will see, this pessimistic result is not representative of the networks that are seen in practice. We thus investigate the loss in expected coverage due to fairness constraints, given by
\begin{equation}
\overline{ \text{\rm{PoF}}}(I, J) := 1 - \frac{\EX_{\mathcal G\sim \text{SBM}}[\text{OPT}^{\text{fair}}(\mathcal G, I, J)]}{\EX_{\mathcal G\sim \text{SBM} }[\text{OPT}(\mathcal G, I, J)]}.
\label{equ:POF_average}
\end{equation}
We emphasize that we investigate the loss in the expected coverage rather than the expected {PoF} for analytical tractability reasons. We make the following assumptions about SBM network.
\begin{assumption}
    For all communities $c\in \mathcal C$, the probability of an edge between two individuals in the community is inversely proportional to the size of the community, 
    i.e., $p_{c}^{\text{\rm{in}}} = \Theta(|\mathcal{N}_c|^{-1})$.
    \label{ass:pin}
\end{assumption} 
\begin{assumption}
    For any two communities $c, c' \in \mathcal C$, the probability of an edge between two nodes $n \in \mathcal N_c$ and $\nu \in \mathcal N_{c'}$ is $p_{cc'}^{\text{\rm out}} = O( (|\mathcal N_c| \log^2 |\mathcal N_c|)^{-1})$.
    \label{ass:pout}
\end{assumption}
Assumption 1 is based on the observation that social networks are usually sparse. This means that most individuals do not form too many links, even if the size of the network is very large. Sparsity is characterized in the literature by the number of edges being proportional to the number of nodes which is the direct result of Assumption~\ref{ass:pin}. Assumption~\ref{ass:pout} is necessary for meaningful community structure in the network. We now present results for the upper bound on {$\overline{\text{PoF}}$} in SBM networks.
\begin{proposition}
Consider an SBM network model with parameters $p_{c}^{\text{\rm{in}}}$ and $p_{cc'}^{\text{\rm{out}}}$, $c,c' \in \mathcal{C}$, satisfying Assumptions~\ref{ass:pin} and~\ref{ass:pout}. If $I = O(\log N)$, then
$$
\overline{\rm{PoF}}(I, 0)  =  1 - \frac{\sum_{c\in\mathcal C}{|\mathcal N_c|}}{{\sum_{c\in\mathcal C}{|\mathcal N_c|d(C)/d(c)}}} - o(1), \text{ where  }\textstyle d(c) := \log |\mathcal N_c|({\log\log |\mathcal N_c|})^{-1}.
$$
\label{prop:PoF}
\end{proposition}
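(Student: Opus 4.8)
The plan is to reduce both optimization problems to a single calculation about the maximum degree of a sparse random graph, and then compare the resulting coverages. Since $J=0$, the availability vector is fixed at $\bm\xi=\e$, and the coverage of a monitor set $S$ equals $|\{n\in\mathcal N : \delta(n)\cap S\neq\emptyset\}|$, which lies between $\max_{v\in S}\deg(v)$ and $\sum_{v\in S}\deg(v)$, where $\deg(v):=|\{n:(v,n)\in\mathcal E\}|$. In a sparse SBM the covered sets of a few high-degree vertices are essentially disjoint, so coverage is $(1+o(1))\sum_{v\in S}\deg(v)$; hence in the deterministic regime the covering problem is governed entirely by the degree sequence, and I would make this reduction precise first.

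The core of the argument is a random-graph degree estimate. Under Assumption~\ref{ass:pin}, community $c$ behaves like an Erd\H{o}s--R\'enyi graph $G(|\mathcal N_c|,\Theta(|\mathcal N_c|^{-1}))$ of bounded average degree, whose maximum degree concentrates at $(1+o(1))d(c)$ with $d(c)=\log|\mathcal N_c|/\log\log|\mathcal N_c|$; moreover a first/second-moment computation shows there are $|\mathcal N_c|^{\Omega(1)}$ vertices of degree $(1-o(1))d(c)$, which is $\omega(I)$ since $I=O(\log N)$, and that $\EX[\Delta_c]=(1+o(1))d(c)$ once the upper tail of $\Delta_c$ is controlled. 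Assumption~\ref{ass:pout} is then used to show that inter-community edges add only a lower-order term to every vertex degree and to the pairwise overlap of covered sets, so the within-community analysis is undisturbed. I expect this to be the main obstacle: one must pass from high-probability statements to the \emph{expectations} appearing in $\overline{\text{PoF}}$, which requires taming atypically large degrees (so that the rare-event contribution to $\EX[\text{OPT}]\le I\,\EX[\Delta_{\max}]$ is $o(I\,d(C))$) and ruling out large neighborhood overlaps across the polynomially many near-extremal vertices that are eligible to be chosen as monitors.

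With the degree estimate in hand the two optima follow from an elementary allocation argument. For $\text{OPT}(\mathcal G,I,0)$: since $d(\cdot)$ is increasing and $\mathcal N_C$ is the largest community, the globally largest degrees lie in $\mathcal N_C$ (inter-community edges being negligible), so placing all $I$ monitors on near-maximal vertices of $\mathcal N_C$ covers $(1-o(1))\,I\,d(C)$ nodes, while $\text{OPT}\le I\,\Delta_{\max}$ with $\EX[\Delta_{\max}]=(1+o(1))d(C)$ gives the matching upper bound, whence $\EX[\text{OPT}]=(1+o(1))\,I\,d(C)$. For $\text{OPT}^{\text{fair}}$: allocating $I_c$ monitors to community $c$ covers $(1+o(1))\,I_c\,d(c)$ of it, so the max--min fair fraction is obtained by equalizing $I_c d(c)/|\mathcal N_c|$ across $c$, yielding $W^\star=(1+o(1))\,I\big/\sum_{c}|\mathcal N_c|/d(c)$; this allocation exhausts the budget and, because $W^\star\to 0$ and each $I_c\le I=O(\log N)$, keeps every community in the unsaturated linear regime with enough near-maximal vertices available. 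Since meeting the constraint already uses all $I$ monitors, none remain to raise total coverage, so $\EX[\text{OPT}^{\text{fair}}]=(1+o(1))\,W^\star N$ with $N=\sum_c|\mathcal N_c|$.

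Finally I would take the ratio of the two expectations, obtaining $\EX[\text{OPT}^{\text{fair}}]/\EX[\text{OPT}]=(1+o(1))\,N\big/\big(d(C)\sum_{c}|\mathcal N_c|/d(c)\big)=(1+o(1))\,\sum_{c}|\mathcal N_c|\big/\sum_{c}|\mathcal N_c|\,d(C)/d(c)$, and substituting into the definition~\eqref{equ:POF_average} yields the claimed expression, with all approximation errors absorbed into the $-o(1)$ term. The crux throughout is the degree analysis of Step two; once the degree sequence of each community and the negligibility of inter-community edges and neighborhood overlaps are established under Assumptions~\ref{ass:pin}--\ref{ass:pout}, the remainder is the resource-allocation computation above.
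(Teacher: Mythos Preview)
Your proposal is correct and follows essentially the same route as the paper's proof: both reduce the problem to the degree sequence via the facts that (i) coverage of $I$ monitors is $(1+o(1))$ times the sum of their degrees in the sparse regime, (ii) the maximum degree in community $c$ concentrates at $d(c)$ with $\omega(I)$ near-extremal vertices available, and (iii) inter-community edges contribute only lower-order terms under Assumption~\ref{ass:pout}; then both place all $I$ monitors in $\mathcal N_C$ for $\text{OPT}$ and equalize $I_c d(c)/|\mathcal N_c|$ across communities for $\text{OPT}^{\text{fair}}$, arriving at the same ratio. If anything, you are more explicit than the paper about the passage from high-probability statements to expectations (tail control of $\Delta_{\max}$, neighborhood overlaps), which the paper handles by freely exchanging limits, expectations, and maxima.
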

\textit{Proof Sketch.} First, we show that under Assumption~\ref{ass:pin}, the coverage within each community is the sum of the degrees of the monitoring nodes. Then, using the assumption on~$I$ in the premise of the proposition (which can be interpreted as a ``small budget assumption''), we evaluate the maximum coverage within each community. Next, we show that between-community coverage is negligible compared to within-community coverage. Thus, we determine the distribution of the monitors, in the presence and absence of fairness constraints. PoF is computed based on the these two quantities. $\blacksquare$


\textbf{Uncertain Case.} Here, imposing fairness is more challenging as we do not know a-priori which nodes may fail. Thus, we must ensure that fairness constraints are satisfied under all failure scenarios.
\begin{proposition}
Consider an SBM network model with parameters $p_{c}^{\text{\rm{in}}}$ and $p_{cc'}^{\text{\rm{out}}}$, $c,c' \in \mathcal{C}$, satisfying Assumptions~\ref{ass:pin} and~\ref{ass:pout}. If $I=O(\log N)$, then
$$
\overline{\rm{PoF}}(I, J) = 1 - \frac{\eta\sum_{c\in\mathcal C}{|\mathcal N_c|}}{(I-J)\times{{d(C)}}} - \frac{J \sum_{c\in\mathcal C\setminus {\{C\}}}{d(c)}}{(I-J) \times d(C)} - o(1),
$$
where $d(c)$ is as in Proposition~\ref{prop:PoF} and $\textstyle \eta := {(I-CJ)}\left({\sum_{c\in\mathcal C}{|\mathcal N_c|/d(c)}}\right)^{-1}$.
\label{prop:price_of_fairness_uncertain}
\end{proposition}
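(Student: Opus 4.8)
The plan is to mirror the proof of Proposition~\ref{prop:PoF}, re-deriving each quantity so that it accounts for the $J$ node failures. The starting point, inherited from the deterministic analysis, is three structural facts that hold with high probability under Assumptions~\ref{ass:pin} and~\ref{ass:pout}: first, the sparsity from Assumption~\ref{ass:pin} makes the neighborhoods of the selected high-degree monitors essentially disjoint, so the coverage accrued inside a community equals the sum of the degrees of its \emph{surviving} monitors; second, the top $O(\log N)$ degrees within community $c$ are all asymptotically equal to $d(c)=\log|\mathcal N_c|/\log\log|\mathcal N_c|$, the maximum degree of a sparse Erd\H{o}s--R\'enyi graph with $\Theta(1)$ average degree; and third, Assumption~\ref{ass:pout} renders between-community coverage negligible relative to within-community coverage. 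Since $\sum_c m_c = I = O(\log N)$, every per-community monitor count $m_c$ used below is $O(\log N)$, so the second fact applies to every monitor placed, and all approximation errors from these three facts are collected into the $-o(1)$ term.

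I would then evaluate the two optima. For $\EX[\text{OPT}(\mathcal G, I, J)]$ (no fairness), because $d(\cdot)$ is increasing in community size and the labels satisfy $|\mathcal N_1|\le\cdots\le|\mathcal N_C|$, the highest-degree nodes lie in the largest community $C$; a short exchange argument shows that concentrating all $I$ monitors in community $C$ simultaneously maximizes the all-available coverage and forces the adversary to delete $J$ monitors of degree $\approx d(C)$, yielding worst-case coverage $(I-J)d(C)$ to leading order.

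The crux is $\EX[\text{OPT}^{\text{fair}}(\mathcal G, I, J)]$. The key observation is that the constraint $F_{\mathcal G,c}(\mathbf x,\boldsymbol\xi)\ge W|\mathcal N_c|$ must hold for \emph{every} $\boldsymbol\xi\in\Xi$, and in particular for the scenario that fails the $J$ highest-degree monitors inside community $c$ alone; since between-community coverage is negligible, community $c$ is protected only from within, so the universal constraint decouples into a per-community robustness requirement. To still cover $W|\mathcal N_c|$ nodes after losing its top $J$ monitors, community $c$ must receive $m_c \approx J + W|\mathcal N_c|/d(c)$ monitors. Summing the budget $\sum_c m_c\le I$ and solving for the largest feasible $W$ (the max-min level returned by the binary search) gives exactly $W=\eta=(I-CJ)(\sum_c|\mathcal N_c|/d(c))^{-1}$, where the $CJ$ term is the budget consumed in making all $C$ communities individually failure-tolerant. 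At $W=\eta$ the budget is tight, so the allocation $m_c=J+\eta|\mathcal N_c|/d(c)$ is forced. Evaluating the worst-case \emph{total} coverage of this allocation, the adversary minimizes the total by deleting the $J$ globally highest-degree monitors, which reside in community $C$, producing $\sum_c m_c d(c) - J d(C) = \eta\sum_c|\mathcal N_c| + J\sum_{c\in\mathcal C\setminus\{C\}} d(c)$.

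Substituting the two optima into the definition~\eqref{equ:POF_average} of $\overline{\text{PoF}}$ yields the stated expression, and setting $J=0$ recovers Proposition~\ref{prop:PoF}, a useful consistency check. I expect the main obstacle to be the rigorous justification of the decoupling in the fair case: formally arguing that the universally-quantified fairness constraint forces each community to absorb $J$ failures on its own (which is what generates both the $CJ$ correction inside $\eta$ and the additive $J\sum_{c\ne C}d(c)$ term), together with verifying that concentrating the adversary's deletions in community $C$ is genuinely worst-case for the total coverage of both the fair allocation and the unconstrained optimum. The remaining effort is bookkeeping, confirming that degree concentration of the top $O(\log N)$ monitors, neighborhood disjointness, and negligible between-community coverage each perturb the ratio by only $o(1)$.
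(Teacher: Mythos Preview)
Your proposal is correct and follows essentially the same route as the paper's proof: the paper likewise obtains $\EX[\text{OPT}]=(I-J)d(C)$ by concentrating all monitors in the largest community, argues that the per-community fairness constraint under failures becomes $(s_c-J)\,d(c)/|\mathcal N_c|\ge W$ (your decoupling observation), solves for $s_c=J+W|\mathcal N_c|/d(c)$ and $W=\eta$, and evaluates the fair worst-case coverage by letting the adversary remove $J$ monitors from community $C$. Your anticipated obstacle---justifying that each community must individually absorb $J$ failures---is exactly the step the paper handles (somewhat tersely) by passing from $s_c d(c)/|\mathcal N_c|\ge W$ to $(s_c-J)d(c)/|\mathcal N_c|\ge W$.
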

%
\textit{Proof Sketch.} The steps of the proof are similar to those in the proof of Proposition~\ref{prop:PoF} with the difference that, under uncertainty, monitors should be distributed such that the fairness constraints are satisfied even after~$J$ nodes fail. Thus, we quantify a minimum number of monitors that should be allocated to each community. We then determine the worst-case coverage both in the presence and absence of fairness constraints. PoF is computed based on these two quantities. $\blacksquare$

\begin{wrapfigure}{r}{0.5\textwidth}
  \begin{center}
  \includegraphics[width=0.45\textwidth]{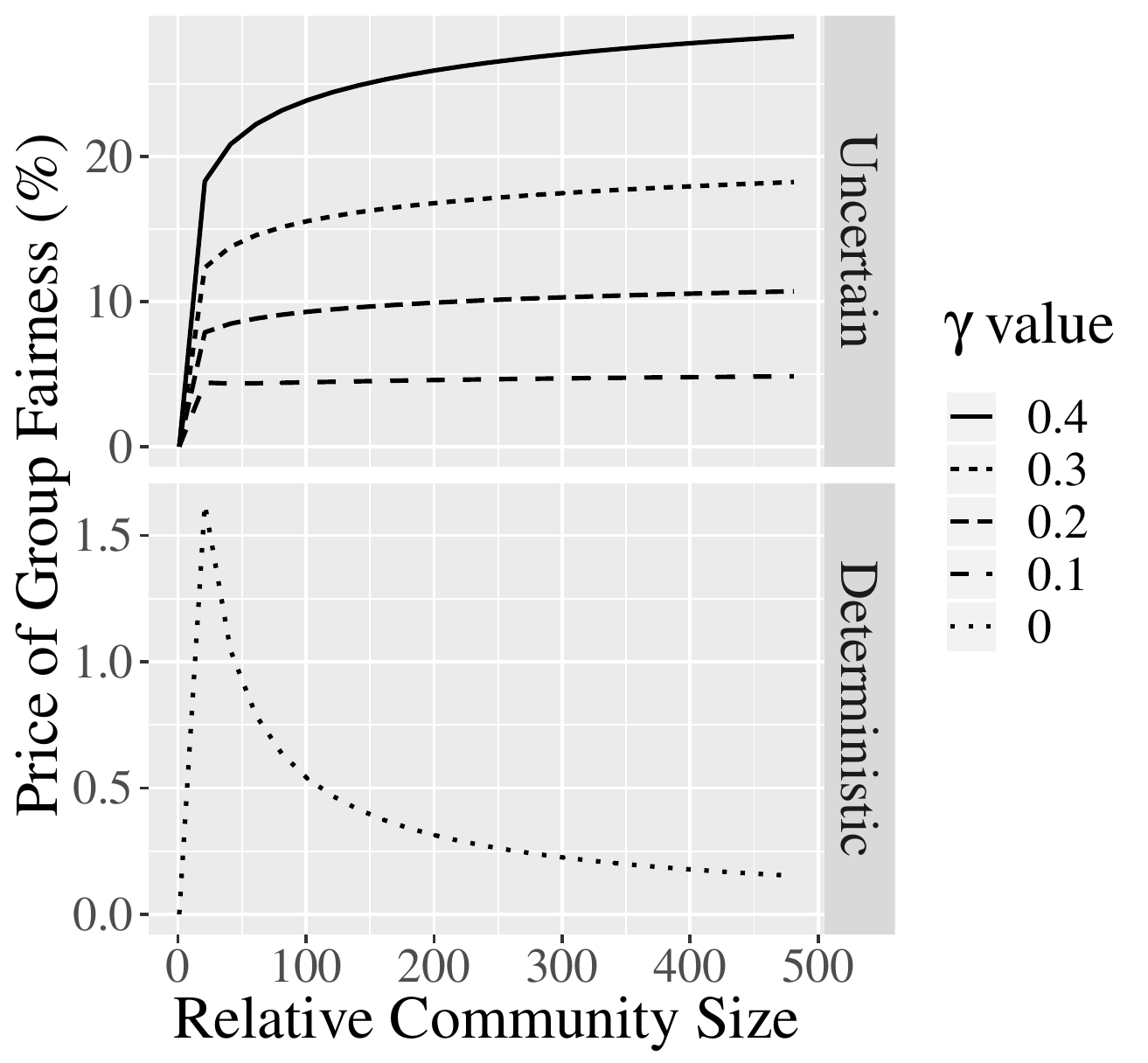}
  \end{center}
  \caption{PoF in the {uncertain {(top)} and deterministic {(bottom)}} settings for SBM networks consisting of two communities ($\mathcal C=\{1,2\}$) where the size of the first community is fixed at $|\mathcal N_1| = 20$ and the size of the other community is increased from $|\mathcal N_2 | = 20$ to $10,000$. In the uncertain setting, $\gamma$ denotes the fraction of nodes that fail.}
  \label{fig:price}
\end{wrapfigure}


Propositions 1 and 2 show how PoF changes with the relative sizes of the communities for the deterministic and uncertain cases, respectively. Our analysis shows that without fairness, one should place all the monitors in the biggest community. {Under a} fair allocation however {monitors are more evenly distributed (although larger communities still receive a bigger share).} {Figure~\ref{fig:price} illustrates} the PoF results in the case of two communities for different failure rates $\gamma$ ($J = \gamma I$), ignoring the $o(.)$ order terms. We keep the size of the first (smaller) community fixed and vary the size of the larger community. In both cases, if $|\mathcal N_1| = |\mathcal N_2|$, the PoF is zero since uniform distribution of monitors is optimal. 
As $|\mathcal N_2|$ increases, the PoF increases in both cases. Further increases in $|\mathcal N_2|$ {result} in a decrease in the PoF for the deterministic case{: under a fair allocation, the bigger community receives a higher share of monitors which is aligned with the total coverage objective.} Under uncertainty however, the PoF is non-decreasing: to guarantee fairness, additional monitors must be allocated to the smaller groups. This also explains why PoF increases with $\gamma$.

\section{Solution Approach}\label{sec:solution-approach}
Given the intractability of Problem~\eqref{prob:single_stage_final}, see Lemma~\ref{lem:np-hard}, we adopt a conservative approximation approach. To this end, we proceed in three steps. First, we note that a difficulty of Problem~\eqref{prob:single_stage_final} is the discontinuity of its objective function. Thus, we show that~\eqref{prob:single_stage_final} can be formulated equivalently as a \emph{two-stage} robust optimization problem by introducing a \emph{fictitious} counting phase \emph{after} ${\bm \xi}$ is revealed. Second, we propose to approximate this decision made in the counting phase (which decides, for each node, whether it is or not covered). Finally, we demonstrate that the resulting approximate problem can be formulated equivalently as a moderately sized MILP, {wherein the trade-off between suboptimality and tractability} can be controlled by a single design parameter.

\textbf{Equivalent Reformulation.}
For any given choice of ${\bm x}\in \mathcal X$ and ${\bm \xi} \in \Xi$, the objective $F_{\mathcal G}({\bm x},{\bm \xi})$ can be explicitly expressed as the optimal objective value of a covering problem. As a result, we can express~\eqref{prob:single_stage_final} equivalently as the two-stage \emph{linear} robust problem
\begin{equation}
\begin{array}{clll}
\displaystyle \max_{\bm x \in \mathcal{X}} \; \min_{\bm \xi \in \Xi} \; \max_{\bm y \in \mathcal Y} \left\{\sum_{n \in \mathcal N} {\bm y}_n \; :  {\bm y}_n \leq \sum_{\nu \in \delta(n)} {\bm \xi}_{\nu} {\bm x}_{\nu}, \; \forall n \in \mathcal N \right \},
\end{array}
\label{prob:two_stage_final}
\end{equation}
{see Proposition~\ref{prop:equivalent-formulation} below.} The second-stage binary decision variables $\textstyle {\bm y}  \in \mathcal Y:=\{ {\bm y} \in \{0,1\}^{N}: \sum_{n\in\mathcal{N}_{c}} {\bm y}_{n} \geq W|\mathcal {N}_{c}|, \; \forall c \in \mathcal C \}$ admit a very natural interpretation: at an optimal solution, ${\bm y}_n=1$ if and only if node $n$ is covered. Henceforth, we refer to ${\bm y}$ as a \emph{covering scheme}.

\begin{definition}[Upward Closed Set]
A set $\mathcal X$ given as a subset of the partially ordered set $[0,1]^{N}$ equipped with the element-wise inequality, is said to be upward closed if for all $\bm x \in \mathcal X$ and $\bar{\bm x} \in [0,1]^N$ such that $\bar{\bm x} \geq \bm x$, it holds that $\bar{\bm x}\in \mathcal X$.
\end{definition}
Intuitively, sets involving lower bound constraints on the (sums of) parameters satisfy this definition. For example, sets that require a minimum fraction of
nodes to be available. We can also consider group-based availability and require a minimum fraction of nodes to be available in every group.  
\begin{assumption}
We assume that: 
The set $\Xi$ is defined through $\Xi := \{0,1\}^N \cap \mathcal T$ for some upward closed set~$\mathcal T$ given by~$\mathcal T:=\{{\bm \xi} \in \mathbb R^N  :  {\bm A} {\bm \xi} \geq {\bm b}\}$, with ${\bm A} \in \mathbb R^{R \times N}$ and ${\bm b} \in \mathbb R^R$. \label{assumption:XiandY}
\end{assumption}
\begin{proposition}
Problems~\eqref{prob:single_stage_final} and \eqref{prob:two_stage_final} are equivalent.
\label{prop:equivalent-formulation}
\end{proposition}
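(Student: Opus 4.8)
The plan is to establish the equivalence by showing that for any fixed monitor choice $\bm x \in \mathcal X$ and any fixed availability realization $\bm \xi \in \Xi$, the inner maximization over the covering scheme $\bm y \in \mathcal Y$ in Problem~\eqref{prob:two_stage_final} recovers exactly the coverage $\sum_{c \in \mathcal C} F_{\mathcal{G},c}(\bm x, \bm \xi)$ appearing in Problem~\eqref{prob:single_stage_final}, and that the fairness constraints $F_{\mathcal{G},c}(\bm x, \bm \xi) \geq W|\mathcal N_c|$ are encoded precisely through feasibility of the set $\mathcal Y$. Since both problems share the same outer structure ($\max_{\bm x} \min_{\bm \xi}$ over identical sets $\mathcal X$ and $\Xi$), it suffices to argue equivalence pointwise in $(\bm x, \bm \xi)$ of the innermost objective together with the correct handling of the group constraints.

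First I would fix $\bm x \in \mathcal X$ and $\bm \xi \in \Xi$ and analyze the inner problem $\max_{\bm y \in \mathcal Y}\{\sum_{n \in \mathcal N} \bm y_n : \bm y_n \leq \sum_{\nu \in \delta(n)} \bm \xi_\nu \bm x_\nu \ \forall n \in \mathcal N\}$. The key observation is that the constraint $\bm y_n \leq \sum_{\nu \in \delta(n)} \bm \xi_\nu \bm x_\nu$, combined with $\bm y_n \in \{0,1\}$, forces $\bm y_n = 0$ whenever no available neighbor is a monitor (the right-hand side is $0$), and permits $\bm y_n \in \{0,1\}$ whenever at least one available monitoring neighbor exists (the right-hand side is a positive integer). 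Since the objective $\sum_n \bm y_n$ is increasing in each $\bm y_n$, the maximizer sets $\bm y_n = 1$ exactly when $\sum_{\nu \in \delta(n)} \bm \xi_\nu \bm x_\nu \geq 1$, which is precisely the definition of $\bm y_n(\bm x, \bm \xi) = \mathbb I(\sum_{\nu \in \delta(n)} \bm \xi_\nu \bm x_\nu \geq 1)$ from the problem setup. Hence the unconstrained-by-$\mathcal Y$ optimal covering scheme coincides with the indicator coverage of~\eqref{prob:single_stage_final}, and its objective value equals $\sum_{c \in \mathcal C} F_{\mathcal{G},c}(\bm x, \bm \xi)$.

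Next I would reconcile the role of the group-fairness constraints, which live in the feasible set $\mathcal Y$ rather than in the objective. The subtlety is that $\mathcal Y$ imposes $\sum_{n \in \mathcal N_c} \bm y_n \geq W|\mathcal N_c|$, so the inner maximum is $-\infty$ (infeasible) precisely when no covering scheme dominated by the true coverage indicator can meet the per-group quota, i.e.\ when $F_{\mathcal{G},c}(\bm x, \bm \xi) < W|\mathcal N_c|$ for some $c$. I would argue that because the objective drives every $\bm y_n$ up to its indicator value, feasibility of the inner problem holds if and only if the indicator coverage itself satisfies $F_{\mathcal{G},c}(\bm x, \bm \xi) \geq W|\mathcal N_c|$ for all $c$. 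Consequently, the $\min_{\bm \xi \in \Xi}$ in~\eqref{prob:two_stage_final} evaluates to $-\infty$ for exactly those $\bm x$ that violate the fairness constraint for some $\bm \xi \in \Xi$ — mirroring the ``$\forall \bm \xi \in \Xi$'' fairness requirement in~\eqref{prob:single_stage_final}, which similarly excludes such $\bm x$ from the feasible region of the outer maximization. Matching these two exclusion mechanisms shows the outer problems optimize the identical objective over the identical effective feasible set of $\bm x$.

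I expect the main obstacle to be the careful bookkeeping of infeasibility semantics: one must verify that an infeasible inner covering problem for a single $\bm \xi$ correctly renders the corresponding $\bm x$ infeasible in the max--min formulation (so that it contributes $-\infty$ to the $\min_{\bm \xi}$ and is thus never selected by $\max_{\bm x}$), and that this is exactly consistent with the hard constraint formulation in~\eqref{prob:single_stage_final} where the fairness condition must hold for all $\bm \xi \in \Xi$. A clean way to handle this is to treat the constrained inner maximum with the convention that maximizing over an empty feasible set yields $-\infty$, and then observe that monotonicity of the objective guarantees the inner problem is feasible iff the indicator coverage meets the quota; no genuine trade-off arises between maximizing coverage and satisfying the quota because both are served by pushing $\bm y$ to its coordinatewise maximum. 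The remaining verification — that the constraint $\bm y_n \leq \sum_{\nu \in \delta(n)} \bm \xi_\nu \bm x_\nu$ with binary $\bm y_n$ and binary $\bm \xi, \bm x$ exactly reproduces the indicator — is a short case analysis on whether the right-hand side is zero or a positive integer.
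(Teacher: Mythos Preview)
Your proposal is correct and follows essentially the same approach as the paper: both arguments fix $(\bm x,\bm\xi)$ and show that the optimal covering scheme in the inner maximization coincides with the indicator $\mathbb I(\sum_{\nu\in\delta(n)}\bm\xi_\nu\bm x_\nu\geq 1)$, so that the inner objective reproduces $\sum_{c}F_{\mathcal G,c}(\bm x,\bm\xi)$, while feasibility of the inner problem over $\mathcal Y$ encodes the per-group fairness constraints. If anything, your treatment of the infeasibility semantics (inner max over an empty set yielding $-\infty$ and thereby excluding the corresponding $\bm x$) is more explicit than the paper's proof, which argues the two directions separately but relies on the same monotonicity observation.
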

\textbf{$K$-adaptability Counterpart.} {Problem~\eqref{prob:two_stage_final} has the advantage of being linear. Yet, its max-min-max structure precludes us from solving it directly. We investigate a conservative approximation to Problem~\eqref{prob:two_stage_final} referred to as \emph{$K$-adaptability counterpart}, wherein $K$ \emph{candidate} covering schemes are chosen in the first stage and the best (feasible and most accurate) of those candidates is selected after ${\bm \xi}$ is revealed. Formally, the $K$-adaptability counterpart of Problem~\eqref{prob:two_stage_final} is
%
%
\begin{equation}
\begin{array}{clll}
\displaystyle \mathop  \text{maximize}_{ \begin{smallmatrix} {\bm x} \in \mathcal X \\ {\bm y}^k \in \mathcal Y, \; k\in \mathcal K
\end{smallmatrix}  } & \displaystyle \min_{\bm \xi \in \Xi} \; \displaystyle \max_{k \in \mathcal K} \; \left\{\sum_{n \in \mathcal N} {\bm y}^{k}_n \; : \; {\bm y}^{k}_n \leq \sum_{\nu \in \delta(n)} {\bm \xi}_{\nu} {\bm x}_{\nu} \; \; \forall n \in \mathcal N\right\}, 
\end{array}
\label{prob:two_stage_k_adapt}
\end{equation}
where ${\bm y}^k$ denotes the $k$th candidate covering scheme, $k \in \mathcal K$. 
We emphasize that the covering schemes are not inputs but rather \emph{decision variables} of the $K$-adaptability problem. Only the value~$K$ is an input. The optimization problem will identify the best $K$ covering schemes that satisfy all the constraints including fairness constraints. The trade-off between optimality and computational complexity of Problem~\eqref{prob:two_stage_k_adapt} can conveniently be tuned using the single parameter~$K$. } 

\textbf{Reformulation as an MILP.} We derive an exact reformulation for the $K$-adaptability counterpart~\eqref{prob:two_stage_k_adapt} of the \emph{robust covering problem} as a moderately sized MILP. Our method extends the results from \cite{rahmattalabi2018robust} to significantly more general uncertainty sets that are useful in practice, and to problems involving constraints on the set of covered nodes. Henceforth, we let $\mathcal{L} := \{0,\ldots,N\}^{K}$, and we define $\mathcal{L}_{+}:=\{\lb\in\mathcal L: \lb > \bm 0\} \text{ and } \mathcal{L}_{0} := \{\lb\in\mathcal L: \lb \ngtr \bm 0\}.$ We present a variant of the generic $K$-adaptability Problem~\eqref{prob:two_stage_k_adapt}, where the uncertainty set $\Xi$ is parameterized by vectors $\lb \in \mathcal L.$  Each $\lb $ is a $K$-dimensional vector, whose $k$th component encodes if the $k$th covering scheme satisfies the constraints of the second stage maximization problem. In this case, $\lb_{k} = 0$. Else, if the $k$th covering scheme is infeasible, $\lb_{k}$ is equal to the index of a constraint that is violated.

\begin{theorem}
Under Assumption~\ref{assumption:XiandY},  Problem~\eqref{prob:two_stage_k_adapt} is equivalent to the mixed-integer bilinear program 
\begin{equation}
\renewcommand{\arraystretch}{1.5}
\begin{array}{clll}
\max &\tau &\\
\text{\rm{s.t.}} &  \tau \in \mathbb R, \; {\bm x} \in \mathcal X, \; {\bm y}^k \in \mathcal Y \;\; \forall k \in \mathcal K & \\
& \!\!\! \left. \begin{array}{l}
{\bm \theta}({\bm \ell}), \; {\bm \beta}^k({\bm \ell}) \in \mathbb R^N_+, \;  {\bm \alpha}(\bm \ell) \in \mathbb R^R_+, \; {\bm \nu}({\bm \ell}) \in \mathbb R^K_+, \; {\bm \lambda}(\lb) \in \Delta_K(\bm \ell) \\
\displaystyle \tau \; \leq \;  - \displaystyle \emph{\textbf{e}}^{\top} {\bm\theta}(\bm \ell) + {\bm \alpha}({\bm \ell})^{\top} {\bm b} - \sum_{
\begin{smallmatrix} k\in\mathcal K : \\ {\bm \ell}_{k} \neq 0 \end{smallmatrix}} \left( {\bm y}^{k}_{{\bm \ell}_k} -1 \right) {\bm \nu}_k(\bm \ell) + \ldots  \\
\qquad  \qquad \qquad \ldots + \sum_{ \begin{smallmatrix} k \in \mathcal K : \\ {\bm \ell}_k = 0 \end{smallmatrix} } \sum_{n\in\mathcal N}{\bm y}^k_n {\bm \beta}_n^k({\bm \ell}) + \sum_{k\in\mathcal K} {\bm \lambda}_k({\bm \ell}) \sum_{n\in \mathcal N} {\bm y}^k_n \\
{\bm \theta}_n (\bm \ell)  \; \leq \; {\bm A}^\top {\bm \alpha}(\bm \ell) + \displaystyle\sum_{\begin{smallmatrix} k\in\mathcal {K} : \\ {\bm \ell}_k \neq 0 \end{smallmatrix}} \sum_{ \nu \in\delta({\bm \ell}_k)} {\bm x}_\nu {\bm \nu}_k ({\bm \ell}) - \displaystyle \sum_{\begin{smallmatrix} k\in\mathcal K : \\ {\bm \ell}_k = 0
\end{smallmatrix}} \sum_{\nu\in\delta(n)}{\bm x}_{\nu} {\bm \beta}_n^k({\bm \ell}) \;\; \forall n \in \mathcal N 
\end{array}  \right\} \forall \lb \in  \mathcal{L}_{0}  \\
& \!\!\! \left. \begin{array}{l}
{\bm \theta}({\lb})\in \mathbb R_+^N,\; {\bm \alpha}(\bm \ell) \in \mathbb R_+^R, \; {\bm \nu}(\bm \ell) \in \mathbb R_+^K \\
1 \leq - \textbf{\emph{e}}^\top {\bm\theta} (\bm \ell) + {\bm \alpha}(\bm \ell)^\top {\bm b}  - \sum_{\begin{smallmatrix} k\in\mathcal {K} : \\ {\bm \ell}_k \neq 0 \end{smallmatrix}} \left( {\bm y}^k_{{\bm \ell}_k} - 1\right) {\bm \nu}_k({\bm \ell}) \\
{\bm \theta}_n({\bm \ell}) \; \leq \; {\bm A}^\top {\bm \alpha}({\bm \ell}) + \displaystyle \sum_{\begin{smallmatrix}
k\in\mathcal K : \\ {\bm \ell}_k \neq 0 \end{smallmatrix}}
\sum_{ \nu \in \delta({\bm \ell}_k)} {\bm x}_\nu {\bm \nu}_k({\bm \ell})  \;\; \forall n \in \mathcal N  \qquad  
\end{array} \right\} \forall \lb \in \mathcal L_+,
\end{array}
\label{prob:K-adaptability_MILP}
\end{equation}
which can be reformulated equivalently as an MILP using standard ``Big-$M$'' techniques  
since all bilinear terms are products continuous and binary variables. The size of this MILP scales with $\textstyle |\mathcal L| = (N + 1)^{K}$; it is polynomial in all problem inputs for any fixed $K$.
\label{thm:main}

\end{theorem}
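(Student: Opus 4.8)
The plan is to first linearize the outermost maximization by introducing an epigraph variable $\tau$, so that Problem~\eqref{prob:two_stage_k_adapt} becomes $\max\{\tau : \bm x\in\mathcal X,\ \bm y^k\in\mathcal Y\ \forall k,\ \Phi(\bm x,\bm y)\geq\tau\}$, where $\Phi(\bm x,\bm y):=\min_{\bm\xi\in\Xi}\max_{k\in\mathcal K}\{\sum_{n}\bm y^k_n : \bm y^k_n\leq\sum_{\nu\in\delta(n)}\bm\xi_\nu\bm x_\nu\ \forall n\}$. The crux is to convert the semi-infinite constraint $\Phi(\bm x,\bm y)\geq\tau$ into finitely many (ultimately bilinear) constraints. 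Since the candidate schemes $\bm y^k$ are first-stage variables, each coverage value $\sum_n\bm y^k_n$ is a constant once $\bm y$ is fixed; hence, for every realization $\bm\xi$, the inner maximum equals the largest coverage over those schemes that happen to be \emph{feasible} at $\bm\xi$. This motivates partitioning $\Xi$ according to which schemes are feasible and, for the infeasible ones, which constraint witnesses their infeasibility --- precisely the role of the index vectors $\bm\ell\in\mathcal L$.

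Next I would make this partition explicit. For a fixed $\bm\ell$, let $\Xi(\bm\ell)$ be the set of $\bm\xi\in\Xi$ such that scheme $k$ is feasible whenever $\bm\ell_k=0$ and constraint $\bm\ell_k$ is violated whenever $\bm\ell_k\neq0$. Using Assumption~\ref{assumption:XiandY} and the binary structure, $\Xi(\bm\ell)$ is described by the linear inequalities $\bm A\bm\xi\geq\bm b$, together with $\sum_{\nu\in\delta(n)}\bm x_\nu\bm\xi_\nu\geq\bm y^k_n$ for all $n$ and all $k$ with $\bm\ell_k=0$, and $\sum_{\nu\in\delta(\bm\ell_k)}\bm x_\nu\bm\xi_\nu\leq\bm y^k_{\bm\ell_k}-1$ for all $k$ with $\bm\ell_k\neq0$. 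Because every $\bm\xi\in\Xi$ realizes at least one pattern, the constraint $\Phi(\bm x,\bm y)\geq\tau$ is equivalent to the conjunction over all $\bm\ell\in\mathcal L$ of: (i) for $\bm\ell\in\mathcal L_0$, whenever $\Xi(\bm\ell)$ is nonempty the best feasible coverage $\max_{k:\bm\ell_k=0}\sum_n\bm y^k_n$ is at least $\tau$; and (ii) for $\bm\ell\in\mathcal L_+$ (no scheme feasible), $\Xi(\bm\ell)$ is empty. I would encode the selection of the best feasible scheme in (i) through a simplex multiplier $\bm\lambda(\bm\ell)\in\Delta_K(\bm\ell)$ supported on $\{k:\bm\ell_k=0\}$, using $\max_k a_k\geq\tau\Leftrightarrow\exists\bm\lambda:\ \sum_k\lambda_k a_k\geq\tau$, and then dualize the linear description of $\Xi(\bm\ell)$ with multipliers $\bm\alpha(\bm\ell)$ (for $\bm A\bm\xi\geq\bm b$), $\bm\beta^k(\bm\ell)$ (for the feasibility rows), $\bm\nu(\bm\ell)$ (for the violation rows) and $\bm\theta(\bm\ell)$ (for $\bm\xi\leq\e$). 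LP duality turns the two cases into exactly the dual-feasibility systems stated in~\eqref{prob:K-adaptability_MILP}, with the emptiness certificate for case (ii) appearing as the strict inequality $1\leq\ldots$ .

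The step I expect to be the main obstacle is establishing \emph{exactness}: the dualization above is carried out over the continuous relaxation $\mathcal T=\{\bm\xi:\bm A\bm\xi\geq\bm b\}\cap[0,1]^N$ rather than over the genuinely discrete set $\Xi=\{0,1\}^N\cap\mathcal T$, and I must argue that this relaxation loses nothing. This is where the upward-closed hypothesis in Assumption~\ref{assumption:XiandY} is essential. The idea is that the adversary's goal (rendering a high-coverage scheme infeasible), together with the monotonicity of the coverage map in $\bm\xi$, implies that a worst-case fractional $\bm\xi$ can always be moved to a binary point of $\Xi$ without helping the covering player: raising any component of $\bm\xi$ keeps it in the upward-closed $\mathcal T$ and can only weakly increase each right-hand side $\sum_{\nu\in\delta(n)}\bm x_\nu\bm\xi_\nu$, so the separating worst-case realization may be taken at an extreme (binary) point. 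I would formalize this by showing the relevant primal LPs admit binary optimizers, thereby extending the discrete-uncertainty reformulation of~\cite{rahmattalabi2018robust} to all upward-closed sets of the assumed form. Finally, every product appearing in~\eqref{prob:K-adaptability_MILP} is between a continuous dual variable and a binary first-stage variable ($\bm x_\nu$ or $\bm y^k_n$); these are linearized exactly by the standard ``Big-$M$'' substitution, yielding an MILP. Since the construction instantiates one constraint block per $\bm\ell\in\mathcal L$ and $|\mathcal L|=(N+1)^K$, the program is polynomially sized for every fixed $K$, completing the proof.
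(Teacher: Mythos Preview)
Your proposal follows the same architecture as the paper: partition $\Xi$ by infeasibility-witness vectors $\bm\ell$, linearize the inner $\max_k$ via simplex weights $\bm\lambda(\bm\ell)\in\Delta_K(\bm\ell)$, write the problem in epigraph form, and dualize the resulting feasibility LPs over the continuous relaxation $\mathcal T\cap[0,1]^N$. The paper isolates the relaxation-exactness step as a separate proposition asserting $\Xi(\bm x,\bm y,\bm\ell)=\emptyset\Longleftrightarrow\overline\Xi(\bm x,\bm y,\bm\ell)=\emptyset$, proved via the ceiling map $\hat{\bm\xi}_n:=\lceil\tilde{\bm\xi}_n\rceil$.

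One point in your exactness sketch needs sharpening. Your monotonicity argument---raising $\bm\xi$ stays in the upward-closed $\mathcal T$ and weakly increases each $\sum_{\nu\in\delta(n)}\bm x_\nu\bm\xi_\nu$---correctly handles membership in $\mathcal T$ and the \emph{feasibility} rows ($\bm\ell_k=0$), but it points the wrong way for the \emph{violation} rows: when $\bm\ell_k\neq0$ the constraint reads $\sum_{\nu\in\delta(\bm\ell_k)}\bm x_\nu\bm\xi_\nu\leq\bm y^k_{\bm\ell_k}-1$, and raising $\bm\xi$ increases the left side, potentially destroying the witness. The paper closes this by noting that binarity of $\bm y$ gives $\bm y^k_{\bm\ell_k}-1\leq 0$, so the violation row already forces $\tilde{\bm\xi}_\nu=0$ for every $\nu\in\delta(\bm\ell_k)$ with $\bm x_\nu=1$; hence $\hat{\bm\xi}_\nu=\lceil 0\rceil=0$ and the row is preserved under the ceiling. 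With this observation added, your plan goes through and coincides with the paper's.
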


\begin{proof}[Proof Sketch]
The reformulation relies on three key steps: First, we partition the uncertainty set {by using the parameter ${\bm \ell}$.} 
Next, we show that by relaxing the integrality constraint on the uncertain parameters $\bm \xi,$ the problem remains unchanged. {This is the key result that enables us to provide an equivalent formulation for Problem~\eqref{prob:two_stage_k_adapt}.} Finally, we employ linear programming duality theory, to reformulate the robust optimization formulation over each subset. As a result, the formulation {has} two sets of decision variable: \emph{(a)} The decision variables of the original problem; \emph{(b)} Dual variables {parameterized} by~${\bm \ell}$ which emerge from the dualization. 
\end{proof}

\textbf{Bender's Decomposition.} {In Problem~\eqref{prob:K-adaptability_MILP},} once binary variables ${\bm x}$ and $\{{\bm y}^k\}_{k \in \mathcal K}$ are fixed, the problem decomposes across $\bm \ell$, i.e., all remaining variables are real valued and can be found by solving a linear program for each $\bm \ell$. Bender's decomposition is an \emph{exact} solution technique that leverages such decomposable structure for more efficient solution~\cite{benders1962partitioning,bertsimas1997introduction}. 
Each iteration of the algorithm starts with {the} solution of {a} relaxed master problem, which is fed into the subproblems to identify violated constraints to add to the master problem. The process repeats until no more violated constraints can be identified. The formulations of master and subproblems are provided in~Section~\ref{sec:Benders-problem-master-sub}.

\textbf{Symmetry Breaking Constraints. } Problem \eqref{prob:K-adaptability_MILP} presents a large amount of symmetry. Indeed, given~$K$ candidate covering schemes $\bm y^{1}, \ldots, \bm y^{K}$, their indices can be permuted to yield another, distinct, feasible solution with identical cost. The symmetry results in significant slow down of the Brand-and-Bound procedure~\cite{bertsimas2005optimization}. Thus, we introduce symmetry breaking constraints in the formulation~\eqref{prob:K-adaptability_MILP} that stipulate the candidate covering schemes be lexicographically decreasing. We refer to~\cite{phebehanangelos} for details.



\section{Computational Study on Social Networks of Homeless Youth}
\label{sec:computational_study}

We evaluate our approach on the five social networks from Table~\ref{table:DC-Greedy-Discrimination}. Details on the data are provided in Section~\ref{sec:final-experiment}. We investigate the robust graph covering problem with maximin racial fairness constraints. All experiments were ran on a Linux 16GB RAM machine with Gurobi~v6.5.0.


First, we compare the performance of our approach against the greedy algorithm of~\cite{tzoumas2017resilient} and the degree centrality heuristic (DC). The results are summarized in Figure~\ref{fig:PoF} (left). From the figure, we observe that an increase in $K$ results in an increase in performance along both axes, with a significant jump from $K=1$ to $K=2, 3$ (recall that $K$ controls complexity/optimality trade-off of our approximation). We note that the gain starts diminishing from $K=2$ to $K=3$. Thus, we only run up to $K=3$. In addition the computational complexity of the problem increases exponentially with $K$, limiting us to increase $K$ beyond 3 for the considered instances. As demonstrated by our results, $K \sim 3$ was sufficient to considerably improve fairness of the covering at moderate price to efficiency. Compared to the baselines, with $K=3$, we significantly improve the coverage of the worse-off group over greedy (resp.\ DC) by 11\% (resp.\ 23\%) on average across the five instances. 

\begin{figure}
\includegraphics[width=\textwidth]{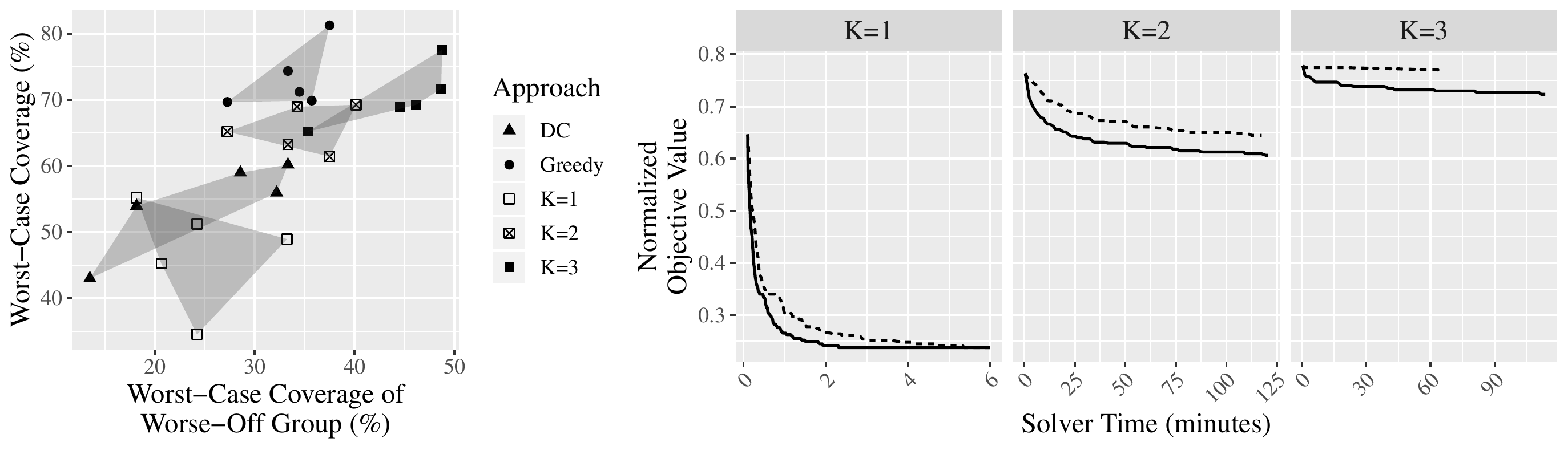}
\caption{Left figure: Solution quality (overall worst-case coverage versus worst-case coverage of the group that is worse-off) for each approach (DC, Greedy, and $K$-adaptability for $K=1,2,3$); The points represent the results of each approach applied to each of the five real-world social networks from Table~\ref{table:DC-Greedy-Discrimination}; Each shaded area corresponds to the convex hull of the results associated with each approach; Approaches that are more fair (resp.\ efficient) are situated in the right- (resp.\ top-)most part of the graph. Right figure: Average of the ratio of the objective value of the master problem to the network size (across the five instances) in dependence of solver time for the Bender's decomposition approach (dotted line) and the Bender's decomposition approach augmented with symmetry breaking constraints (solid line). For both sets of experiments, the setting was $I = N/3$ and $J = 3$.}
\label{fig:PoF}
\end{figure}

\begin{table}
  \small
  \centering
  \begin{tabular}{r*{13}{c}}
    \toprule
    \multirow{4}{*}{Name} & \multirow{4}{*}{Size $N$} &  \multicolumn{6}{c}{ Improvement in Min. Percentage Covered (\%)} & \multicolumn{6}{c}{PoF (\%)}
    \\
    \cmidrule(lr){3-14}
    &  &  \multicolumn{6}{c}{Uncertainty Level $J$} & \multicolumn{6}{c}{Uncertainty Level $J$}\\
    \cmidrule(lr){3-8}\cmidrule(lr){9-14}
    & & 0 & 1 & 2 & 3 & 4 & 5 & 0 & 1 & 2 & 3 & 4 & 5  \\
    \hline
    \texttt{SPY1} & 95 & 15 & 16 & 14 & 10 & 10 & 9 & 1.4 & 1.0 & 2.1 & 1.3 & 3.3 & 4.2  \\
    \texttt{SPY2} & 117 & 20 & 14 & 9 & 10 & 8 & 10 & 0.0 & 1.2 & 3.7 & 3.3 & 3.6 & 3.7  \\
    \texttt{SPY3} & 118 & 20 & 16 & 16 & 15 & 11 & 10 & 0.0 & 3.4 & 4.8 & 6.4 & 3.2 & 4.0\\
    \texttt{MFP1} & 165 & 17 & 15 & 7 & 11 & 14 & 9 & 0.0 & 3.1 & 5.4 & 2.4 & 6.3 & 4.4    \\
    \texttt{MFP2} & 182 & 11 & 12 & 10 & 9 & 12 & 12 & 0.0 & 1.0 & 1.0 & 2.2 & 2.4 & 3.6\\
    \hline
    \multicolumn{2}{c}{{Avg.} ($I = N/3$)}  & 16.6  & 14.6 & 11.2 & 11.0 & 11.0 & 10.0 & 0.3 & 1.9 & 3.4 & 3.1 & 3.8 & 4.0 \\
    \hline 
    \hline \multicolumn{2}{c}{{Avg. ($I = N/5)$}} & {15.0} & {13.8} & {14.0} & {10.0} & {9.0} & {6.7} & {0.6} & {2.1} & {3.2} & {3.2} & {3.9} & {3.8} \\
    \hline
    \hline \multicolumn{2}{c}{{Avg. ($I = N/7)$}} & {12.2} & {11.4} & {11.2} & {11.4} &  {8.2} & {6.4} & {0.1} & {2.5} & {3.5} & {3.2} & {3.5} & {4.0} \\
    \bottomrule
  \end{tabular}
  \vspace{0.1cm}
  \caption{Improvement on the worst-case coverage of the worse-off group and associated PoF for each of the five real-world social networks from Table~\ref{table:DC-Greedy-Discrimination}. The first five rows correspond to the setting $I=N/3$. In the interest of space, we only show averages for the settings $I=N/5$ and $I=N/7$. In the deterministic case ($J=0$), the PoF is measured relative the coverage of the true optimal solution (obtained by solving the integer programming formulation of the graph covering problem). In the uncertain case ($J>0$), the PoF is measured relative to the coverage of the greedy heuristic of~\cite{tzoumas2017resilient}.}
\label{table:K-adaptability-Discrimination}
\end{table}


Second, we investigate the effect of uncertainty on the coverage of the worse-off group and on the PoF, for both the deterministic ($J=0$) and uncertain ($J>0$) cases as the number of monitors~$I$ is varied in the set $\{N/3, N/5,N/7\}$. These settings are motivated by numbers seen in practice (typically, the number of people that can be invited is 15-20\% of network size). Our results are summarized in Table~\ref{table:K-adaptability-Discrimination}. Indeed, from the table, we see for example that for $I=N/3$ and $J=0$ our approach is able to improve the coverage of the worse-off group by 11-20\% and for $J>0$ the improvement in the worse-case coverage of the worse-off group is 7-16\%. On the other hand, the PoF is very small: 0.3\% on average for the deterministic case and at most 6.4\% for the uncertain case. These results are consistent across the range of parameters studied.  We note that the PoF numbers also match our analytical results on PoF in that uncertainty generally induces higher PoF.


Third, we perform a head-to-head comparison of our approach for $K=3$ with the results in Table~\ref{table:DC-Greedy-Discrimination}. Our findings are summarized in Table~\ref{table:HEA-TO-HEAD} in {Section~\ref{sec:final-experiment}}. As an illustration, in \texttt{SPY3}, the worst-case coverage by racial group under our approach is: White 90\%, Hispanic 44\%, Mixed 85\% and Other 87\%. These numbers suggest that coverage of Hispanics (the worse-off group) has increased from 33\% to 44\%, a significant improvement in fairness. To quantify the overall loss due to fairness, we also compute PoF values. The maximum PoF across all instances was at most 4.2\%, see Table~\ref{table:HEA-TO-HEAD}. 


Finally, we investigate the benefits of augmenting our formulation with symmetry breaking constraints. Thus, we solve all five instances of our problem with the Bender's decomposition approach with and without symmetry breaking constraints. The results are summarized in Figure~\ref{fig:PoF} (right). Across our experiments, we set a time limit of 2 hours since little improvement was seen beyond that. In all cases, and in particular for $K=2$ and~$3$, symmetry breaking results in significant speed-ups. For $K=3$ (and contrary to Bender's decomposition augmented with symmetry breaking), Bender's decomposition alone fails to solve the master problem to optimality within the time limit. We would like to remark that employing $K$-adaptability is necessary: indeed, Problem~\eqref{prob:single_stage_final} would not fit in memory. Similarly, using Bender's decomposition is needed: even for moderate values of $K$ (2 to 3), the $K$-adaptability MILP~\eqref{prob:K-adaptability_MILP} could not be loaded in memory.


{\textbf{Conclusion.} We believe that the robust graph covering problem with fairness constraints is worthwhile to investigate. It poses a huge number of challenges and holds great promise in terms of the realm of possible real-world applications with important potential societal benefits, e.g., to prevent suicidal ideation and death and to protect individuals during disasters such as landslides.}


\newpage

\section*{Acknowledgements}
We are grateful to three anonymous referees whose comments helped substantially improve the quality of this paper. This work was supported by the Smart \& Connected Communities program of the National Science Foundation under NSF award No. 1831770 and by the US Army Research Office under grant number W911NF1710445.

\bibliographystyle{plain}
\bibliography{nips2019.bib}

\newpage

\appendix



\section{Supplemental Material: Experimental Results in Section~\ref{sec:computational_study}}
\label{sec:final-experiment}

\textbf{Data and Data Preprocessing.} The original datasets used throughout our paper are described in detail in~\cite{barman2016sociometric}. They present 8 racial groups, with each individual belonging to a single group. To avoid misinterpretation of the results, we collect racial groups with a population $<10\%$ of the network size~$N$ under the ``Other'' category. The racial composition of the networks after the preprocessing is provided in Table~\ref{table:racial_composition}. For instance, network \texttt{SPY1} consists of~$54\%$ White, $11\%$ Black, $15\%$ Mixed and $20\%$ Others. The empty entry for Hispanic indicates that their population was less than $10\%$; as a result, they are categorized under ``Other''.

\begin{table}[ht] 
  \centering
  \small
  \begin{tabular}{cccccc}
    \toprule
    {Network Name} & White & Black & Hispanic & Mixed & Other \\
    \hline
    \texttt{SPY1} & 54 & 11  & -- & 15 & 20  \\
    \texttt{SPY2} & 55 & -- & 11 &  21 & 13  \\
    \texttt{SPY3} & 58 & -- & 10 & 18 & 14 \\
    \texttt{MFP1} & 16 & 38 & 22 & 16 & 8 \\
    \texttt{MFP2} & 16 & 32  & 22 & 20 & 10 \\  
    \toprule
    \end{tabular}
  \caption{Racial composition (\%) of the social networks considered after preprocessing}
  \label{table:racial_composition}
  \vspace*{-\baselineskip} 
\end{table}


\textbf{Setting of Parameter $W$.} We now describe in detail the procedure we use to select~$W$ in our experiments. As noted in Section~\ref{sec:problem_formulation}, to achieve maximin fairness, $W$ must take the maximum value for which the problem is feasible (fairness constraints satisfied). Its value thus depends on other parameters, including~$I$, $J$, and $K$. In our experiments, we conduct a search to identify the best value of~$W$ for each setting. Specifically, we vary~$W$ from 0 to 1, in increments of 0.04; we employ the largest~$W$ for which the problem is feasible. By construction, this choice of~$W$ guarantees that all of the fairness constraints are satisfied. In Table~\ref{table:W}, we provide the values of~$W$ associated with the results in Table~\ref{table:K-adaptability-Discrimination} for $I = N/3$ and $K=3$ and for each of the values of~$J$. 
\begin{table}[ht] 
  \centering
  \small
  \begin{tabular}{cccccc}
    \toprule
    {Network Name} & $J=1$ & $J=2$ & $J=3$ & $J=4$ & $J=5$ \\
    \hline
    \texttt{SPY1} & 0.44 & 0.40 & 0.36 & 0.32 & 0.32  \\
    \texttt{SPY2} & 0.56 & 0.52  & 0.48 & 0.44 &  0.36 \\
    \texttt{SPY3} & 0.44 & 0.36 & 0.32 & 0.28 & 0.24 \\
    \texttt{MFP1} & 0.52 & 0.48 & 0.44 & 0.40 & 0.32 \\
    \texttt{MFP2} & 0.56 & 0.52  & 0.44 & 0.40 & 0.32 \\
    \toprule
    \end{tabular}
  \caption{Values of $W$ output by our search procedure and used in the experiments associated with Table~\ref{table:K-adaptability-Discrimination}.}
  \label{table:W}
\end{table}


\textbf{Head-to-Head Comparison with Table~\ref{table:DC-Greedy-Discrimination}.} We conduct a head-to-head comparison of our approach with the results from Table~\ref{table:DC-Greedy-Discrimination} which motivated our work. The results are summarized in Table~\ref{table:HEA-TO-HEAD}. From the table we observe a consistent increase of 8-14\% in worst-case coverage of the worse-off group. For example, in \texttt{SPY3}, the coverage of Hispanics has increased from 33\% to 44\%. We can also see that the PoF is moderate, ranging from 1-4.2\%.  The result for the \texttt{MFP1} network suggests a $36\%$ increase in the coverage of the ``Other'' group. We note that, by construction, this group consists of racial minorities with a population less than 10\% of the network size. While this increase has impacted the coverage of ``majority'' groups, the worst-case coverage of the worse-off group has increased by 14\% with a negligible PoF of 2.6\%.

\begin{table}[ht!] 
  \small
  \centering
  \begin{tabular}{r*{8}{c}}
    \toprule
    \multirow{2}{*}{Network Name} & \multirow{2}{*}{Network Size ($N$)} &  \multicolumn{5}{c}{Worst-case coverage of individuals by racial group (\%)} & \multirow{2}{*}{PoF (\%)}\\
    \cmidrule(lr){3-7}
     &  & White & Black & Hispanic & Mixed  & Other  &  \\
    \midrule
    \texttt{SPY1} & 95 & 65 (70) & \textbf{45} (36) & -- & 79 (86) & 88 (94) & 3.3 \\
    \texttt{SPY2} & 117 &{81} {(78)} & -- & \textbf{50} (42) & {72} {(76)} & {73} {(67)} & {1.0} \\
    \texttt{SPY3} & 118 & 90 (88) & -- & \textbf{44} (33) & 85 (95) & 87 (69) & 4.2\\
    \texttt{MFP1} & 165  & 85 (96) & 69 (77) & 42 (69) & 73 (73) & \textbf{64} (28) &  2.6 \\
    \texttt{MFP2} & 182 & \textbf{56} (44) & 80 (85) & 70 (70) & 71 (77)& 72 (72) & 3.4 \\
  \bottomrule \\
  \end{tabular}
  \caption{Reduction in racial discrimination in node coverage resulting from applying our proposed algorithm relative to that of~\cite{tzoumas2017resilient} on the five real-world social networks from Table~\ref{table:racial_composition}, when 1/3 of nodes (individuals) can be selected as monitors, out of which at most 10\% may fail. The numbers correspond to the worst-case percentage of covered nodes across all monitor availability scenarios. The numbers in the parentheses are solutions to the state-of-the-art algorithm~\cite{tzoumas2017resilient} (same numbers as in Table~\ref{table:DC-Greedy-Discrimination}.}
  \label{table:HEA-TO-HEAD}
\end{table}

\section{Supplemental Material: Proof of Statements in Section~\ref{sec:problem_formulation}}\label{apd:np-hardness}
\begin{proof}[Proof of Lemma~\ref{lem:np-hard}]
For the special case when all monitors are available ($\Xi = \{{\textbf{e}}\}$), there is a single community ($C = 1$), and no fairness constraints are imposed ($W=0$), Problem~\eqref{prob:single_stage_final} reduces to the maximum coverage problem, which is known to be $\mathcal{NP}$-hard~\cite{feige1998threshold}.  
\end{proof}


\section{Supplemental Material: Proofs of Statements in Section~\ref{sec:PoF}}
\newar{In all of our analysis, we assume the graphs are undirected. This can be done without loss of generality and the results hold for directed graphs.}
\begin{figure}[ht!]
\centering
\subfloat[Original Graph~\label{fig:a}]{\includegraphics[width= 4.2cm]{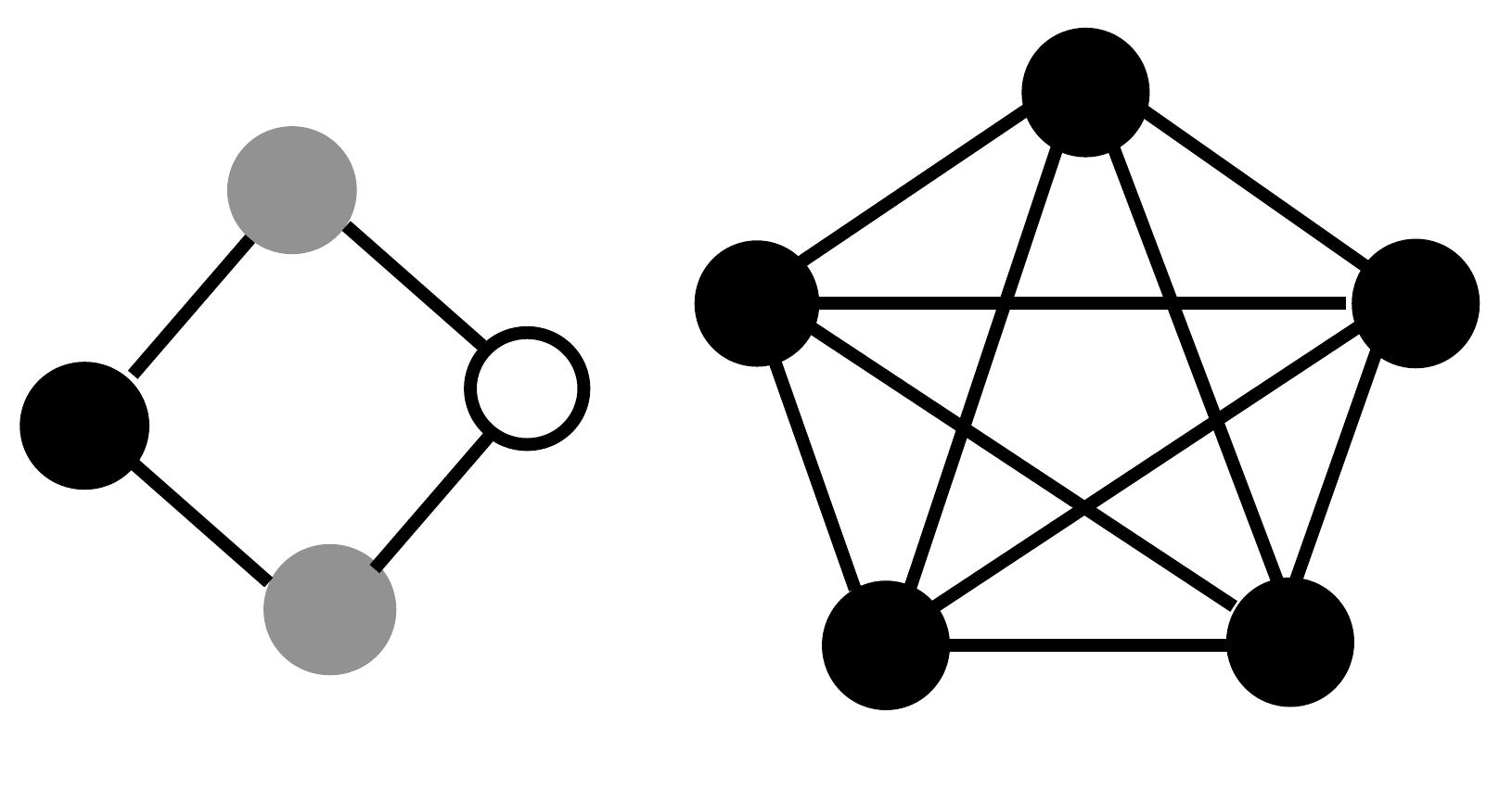}}
\hfill
\subfloat[With fairness~\label{fig:b}]{\includegraphics[width= 4.2cm]{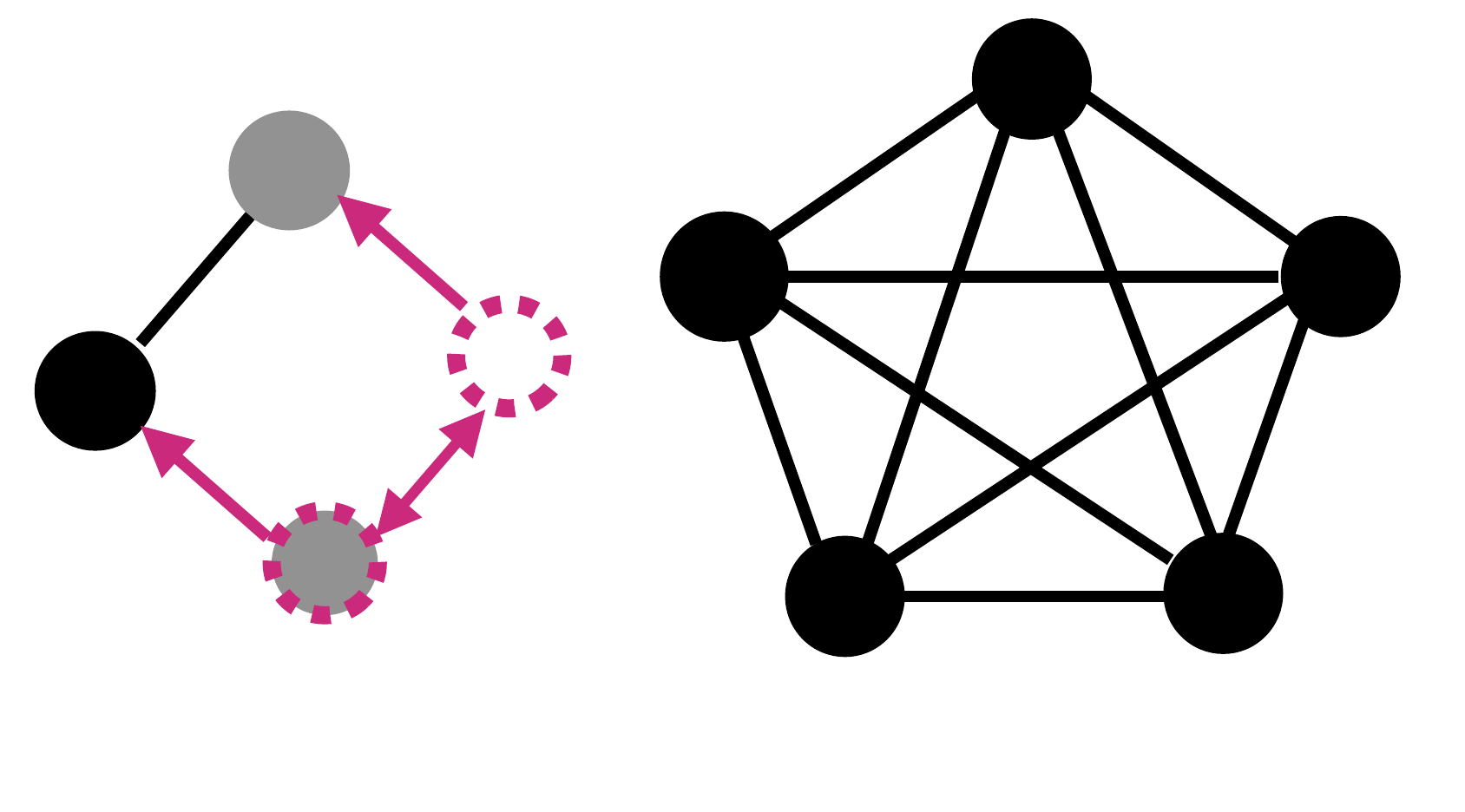}}
\hfill
\subfloat[Without fairness~\label{fig:c}]{\includegraphics[width= 4.2cm]{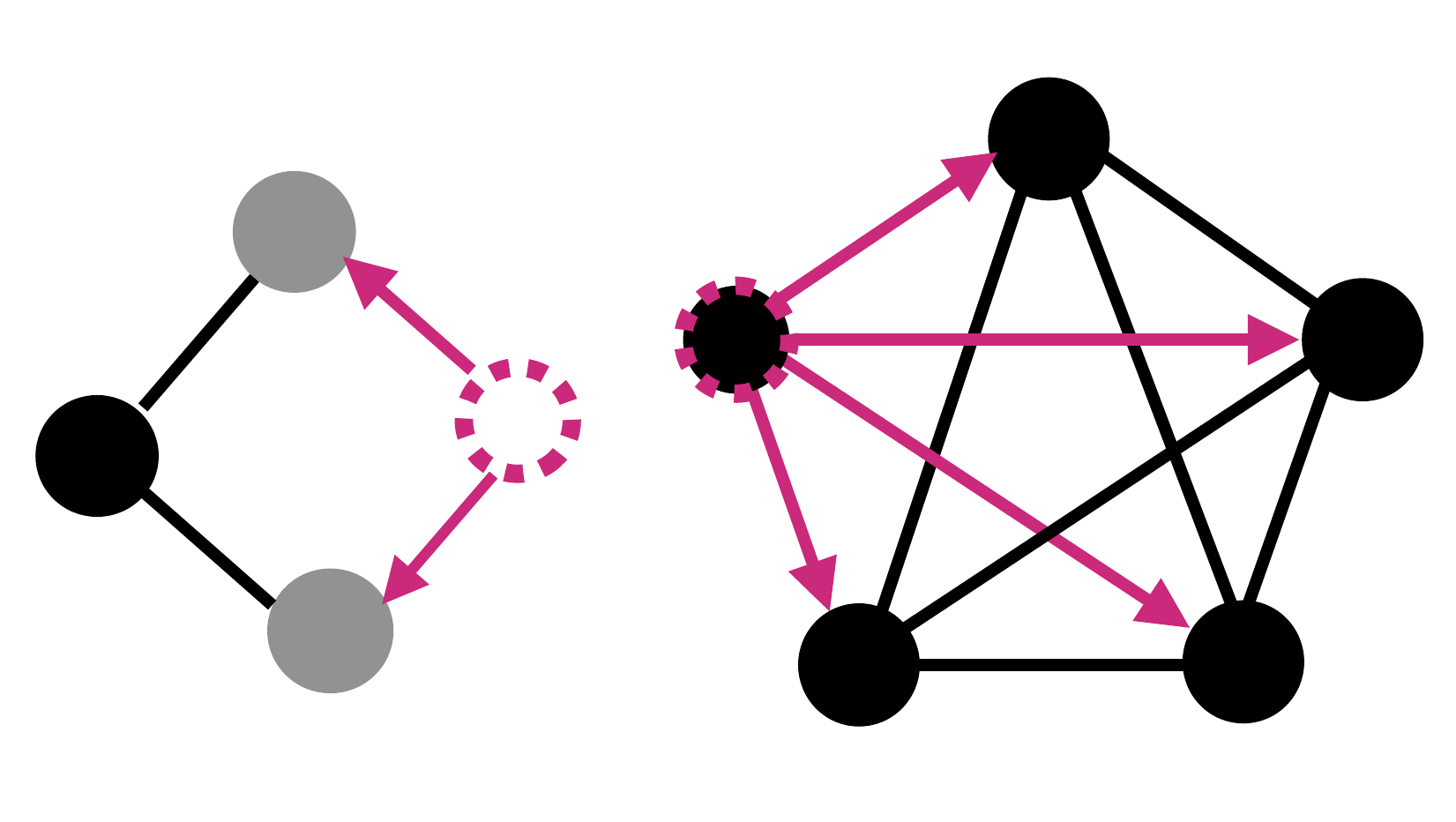}}
\caption{Companion figure to Lemma~\ref{lem:worst-case}. The figures illustrate a network sequence $\{\mathcal G_N\}_{N=5}^\infty$ parameterized by~$N$ and consisting of two disconnected clusters: a small and a large one, with 4 and $N-4$ nodes, respectively. The small cluster remains intact as $N$ grows. The nodes in the large cluster form a clique. In the figures, each color (white, grey, black) represents a different group and we investigate the price of imposing fairness across these groups. The subfigures show the original graph (a) and an optimal solution when $I=2$ monitors can be selected in the cases (b) when fairness constraints are not imposed and (c) when fairness constraints are imposed, respectively. It holds that $\text{\rm{OPT}}^{\text{\rm{fair}}}(\mathcal G_N, 2, 0) = 4$ and $\text{\rm{OPT}}(\mathcal G_N, 2, 0) = N-3$ so that the PoF in $\mathcal G_N$ converges to one as $N$ tends to infinity.}
\label{fig:pof-general-graphs}
\end{figure}

\subsection{Worst-Case PoF}

\begin{proof}[Proof of Lemma~\ref{lem:worst-case}]
Let $\{ \mathcal G_N \}_{N=5}^\infty$ denote the graph sequence shown in Figure~\ref{fig:pof-general-graphs}(a) (wherein all edges are bidirectional). The network consists of three groups (e.g., racial groups) for which fair treatment is important. Network $\mathcal G_N$ {consists} of two disjoint clusters: one {involving} four nodes and a bigger clique containing the remaining $(N-4)$ nodes. Suppose that we can choose $I = 2$ nodes as monitors and that all of them are available ($J=0$). Observe that Problem~\eqref{prob:single_stage_final} is feasible only if $0\leq W \leq (N-3)^{-1}$. For $\textstyle W = (N-3)^{-1},$ the optimal solution places both nodes in the smaller cluster, see Figure~\ref{fig:pof-general-graphs}(b). This way, at least one node from each group is covered. The total coverage for the fair solution is then equal to $\text{\rm{OPT}}^{\text{\rm{fair}}}(\mathcal G_N, 2, 0) = 4$. The maximum achievable coverage under no fairness constraints, however, is obtained by placing one monitor in each cluster, see Figure~\ref{fig:pof-general-graphs}(c). Thus, the total coverage is equal to ${\text{\rm{OPT}}(\mathcal G_N, 2, 0) = N-3}$. As a result, $\textstyle {\rm{PoF}}(\mathcal G_N, 2,0) = 1-4(N-3)^{-1}$ and for $N \geq 4/\epsilon + 3$, it holds that $\textstyle {\rm{PoF}}(\mathcal G_N, 2,0) \geq 1-\epsilon$. The proof is complete.
\end{proof}

\subsection{Supporting Results for the PoF Derivation}

In this section, we provide the preliminary results needed in the derivation of the PoF for both the deterministic and robust graph covering problems. First, we provide two results (Lemmas~\ref{lem:max-degree} and~\ref{lem:I}) from the literature which characterize the maximum degree, as well as the expected number of {maximum}-degree nodes in sparse \newar{Erd\H{o}s R\'{e}ny graphs~\cite{erd6s1960evolution,gilbert1959random}}. We note that in SBM graphs which are used in our PoF analysis, each community $c \in \mathcal C$, when viewed in isolation, is an instance of the Erd\H{o}s R\'{e}nyi graph, in which each edge exists independently with probability $p_c^{\text{in}}$. These results are useful to evaluate the coverage of each community $c\in \mathcal C$ under the sparsity Assumption~\ref{ass:pin}. Specifically, they enable us to show in Lemma~\ref{lem:coverage-in} that, in sparse Erd\H{o}s R\'{e}nyi graphs, the coverage can be evaluated approximately as the sum of the degrees of the monitoring nodes. Thus, the maximum coverage within each community in an SBM network can obtained by selecting the maximum degree nodes. Lastly, we prove Lemma~\ref{lem:coverage-out} which will be useful to show that coverage from monitoring nodes in other communities in SBM networks is negligible.

In what follows, we use $\mathbb G_{N,p}$ to denote \newar{a} random \newar{instance} of Erd\H{o}s R\'{e}ny graphs on vertex set $\mathcal N (= \{1,\ldots,N\})$, where each edge occurs independently with probability $p$.
Following the notational conventions in~\cite{frieze-karonski-2015}, we will say that a sequence of events $\{ \mathbb A_{n} \}_{n=1}^N$ occurs with high probability if $\textstyle \lim_{n\rightarrow \infty} \mathbb P(\mathbb A_{n}) = 1$ and, given a graph $\mathcal G$, we let $\Delta(\mathcal G)$, the maximum degree of vertices of $\mathcal G$.

\begin{theorem}[{\cite[Theorem 3.4]{frieze-karonski-2015}}]
Let $\{\mathbb G_{N,p} \}_{N=1}^\infty$ a sequence of graphs.  
If $p = \newar{\textstyle \Theta(N^{-1})}$, then with high probability
$$
\lim_{N \rightarrow \infty} \Delta(\mathbb G_{N,p}) =  \newar{\frac{\log N}{\log\log N}}.
$$
\label{lem:max-degree}
\end{theorem}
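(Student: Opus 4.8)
The plan is to pin down $\Delta(\mathbb{G}_{N,p})$ via the first- and second-moment method applied to the number of high-degree vertices. Write $p = c/N$ with $c = \Theta(1)$ (permitted by the hypothesis $p = \Theta(N^{-1})$), so that the degree of any fixed vertex is distributed as $\mathrm{Bin}(N-1,p)$, which is well approximated by a Poisson variable of mean $\lambda := (N-1)p \to c$. For an integer threshold $d$, let $X_d$ denote the number of vertices of degree at least $d$. By linearity, $\mathbb{E}[X_d] = N\,\mathbb{P}[\mathrm{Bin}(N-1,p) \geq d]$, and in the relevant regime $d \to \infty$, $d = o(N)$ the tail is dominated by its first term, giving $\mathbb{E}[X_d] = (1+o(1))\, N \binom{N-1}{d} p^d (1-p)^{N-1-d}$. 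Applying Stirling's formula to $d!$ then yields $\log \mathbb{E}[X_d] = \log N - d\log d + d(1 + \log c) - \tfrac{1}{2}\log d + O(1)$.

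The location of the threshold follows by setting $\log \mathbb{E}[X_d] = 0$: the dominant balance is $d\log d \sim \log N$, whose solution is $d^\star = (1+o(1))\frac{\log N}{\log\log N}$, since for $d$ of this form $\log d = (1+o(1))\log\log N$. First I would establish the upper bound: fix $\epsilon > 0$ and set $d = \lceil (1+\epsilon)d^\star \rceil$. Substituting into the expression above shows $\log \mathbb{E}[X_d] \to -\infty$, so $\mathbb{E}[X_d] \to 0$, and Markov's inequality gives $\mathbb{P}[\Delta(\mathbb{G}_{N,p}) \geq d] \leq \mathbb{E}[X_d] \to 0$; hence with high probability $\Delta \leq (1+\epsilon)d^\star$.

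For the matching lower bound I would use the second-moment method. Set $d = \lfloor (1-\epsilon)d^\star \rfloor$, so that $\mathbb{E}[X_d] \to \infty$. Writing $X_d = \sum_{v} \mathbb{I}[\deg(v) \geq d]$, the variance splits into the per-vertex variances (of order $\mathbb{E}[X_d]$) and the covariances over pairs $u \neq v$. The only source of dependence between $\deg(u)$ and $\deg(v)$ is the single shared potential edge $\{u,v\}$; conditioning on its presence or absence makes the two degrees independent binomials on disjoint edge sets, and a short computation shows $\mathrm{Cov}(\mathbb{I}[\deg(u)\geq d],\,\mathbb{I}[\deg(v)\geq d]) = o\big(\mathbb{P}[\deg(u)\geq d]^2\big)$. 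Thus $\mathrm{Var}(X_d) = o(\mathbb{E}[X_d]^2) + O(\mathbb{E}[X_d])$, and Chebyshev's inequality gives $\mathbb{P}[X_d = 0] \to 0$, so with high probability $\Delta \geq (1-\epsilon)d^\star$. Combining the two bounds and letting $\epsilon \downarrow 0$ yields $\Delta(\mathbb{G}_{N,p}) = (1+o(1))\frac{\log N}{\log\log N}$ with high probability, as claimed.

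The main obstacle is the covariance estimate in the second-moment step: although the dependence between two vertex degrees is weak (one shared edge among $\Theta(N)$ possible edges), one must verify quantitatively that it does not destroy concentration near the threshold, where the individual tail probabilities are themselves tiny ($\Theta(1/N)$). The Stirling book-keeping needed to locate $d^\star$ and to confirm that the $(1\pm\epsilon)$ windows genuinely straddle the transition of $\mathbb{E}[X_d]$ is routine but must be carried out with care.
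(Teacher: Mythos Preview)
The paper does not supply its own proof of this statement: it is quoted verbatim as Theorem~3.4 of the Frieze--Karo\'nski textbook and used as a black box in the subsequent PoF analysis. So there is no ``paper's proof'' to compare against; the relevant benchmark is the textbook argument itself.

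Your outline is essentially that textbook argument. The first-moment upper bound and the second-moment lower bound via Chebyshev are exactly the standard route, and your identification of the single shared edge as the only source of dependence is correct. The one place worth tightening is the covariance bound: writing $q_j := \mathbb{P}[\mathrm{Bin}(N-2,p)\geq j]$ and conditioning on the edge $\{u,v\}$ gives
\[
\mathrm{Cov}\bigl(\mathbb{I}[\deg u\geq d],\,\mathbb{I}[\deg v\geq d]\bigr) \;=\; p(1-p)\,(q_{d-1}-q_d)^2,
\]
and since $q_{d-1}-q_d = \mathbb{P}[\mathrm{Bin}(N-2,p)=d-1] = \Theta((d/\lambda)\,q_d)$ in the Poisson regime, the total pairwise covariance contribution is $O\bigl(N\,d^2\,p_d^2\bigr) = o\bigl((Np_d)^2\bigr)$ because $d^2/N\to 0$. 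Making this explicit removes the only soft spot in your sketch; the rest is routine.
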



\begin{lemma}
Let $\{\mathbb G_{N,p} \}_{N=1}^\infty$ a sequence of graphs with $p = \newar{\textstyle \Theta(N^{-1})}$. Let \newar{$\textstyle \sigma(N) := \log N({\log\log N})^{-1}$. Then, it holds that}
\begin{equation*}
\newar{\EX[X_{\sigma(N)} (\mathbb G_{N,p})] \geq \displaystyle N^{\frac{\log \log \log N - o(1)}{\log \log N}},}
\label{equ:order-I}
\end{equation*}
\label{lem:I}
\end{lemma}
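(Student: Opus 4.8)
The plan is to compute $\EX[X_{\sigma(N)}(\mathbb G_{N,p})]$ exactly by linearity of expectation and then read off its growth rate. Here $X_{k}(\mathbb G_{N,p})$ counts the vertices of degree exactly $k$, and in $\mathbb G_{N,p}$ the degree of any fixed vertex is distributed as $\mathrm{Bin}(N-1,p)$, so with $k := \sigma(N) = \log N/\log\log N$ we have
\[
\EX[X_{k}(\mathbb G_{N,p})] = N\binom{N-1}{k}p^{k}(1-p)^{N-1-k}.
\]
Since $p = \Theta(N^{-1})$, the mean degree $\lambda := (N-1)p$ is a fixed constant $\Theta(1)$, and I would pass to the Poisson regime: using $\binom{N-1}{k}p^{k} = \tfrac{\lambda^{k}}{k!}\prod_{i=0}^{k-1}(1-\tfrac{i}{N-1})$ and $(1-p)^{N-1-k} = e^{-\lambda}(1+o(1))$, the product and survival corrections are $1+o(1)$ because $k^{2}/N\to 0$ and $pk\to 0$. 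This yields $\EX[X_{k}] = N\,\tfrac{\lambda^{k}}{k!}e^{-\lambda}(1+o(1))$. (For a pure lower bound one can avoid the two-sided estimate altogether, using $\binom{N-1}{k}\geq (N-k)^{k}/k!$ and $k!\leq k^{k}$.)

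Taking logarithms reduces everything to an elementary asymptotic computation. I would write
\[
\log\EX[X_{k}] = \log N + k\log\lambda - \lambda - \log k! + o(1),
\]
and apply Stirling, $\log k! = k\log k - k + O(\log k)$. The decisive step is the cancellation: since $\log k = \log\log N - \log\log\log N$, one gets $k\log k = \log N - \tfrac{\log N\,\log\log\log N}{\log\log N}$, so the two $\log N$ contributions cancel and the surviving leading term is exactly $\tfrac{\log N\,\log\log\log N}{\log\log N}$. Every other term — $k\log\lambda$, $+k$, $O(\log k)$, and the constant $-\lambda$ — is $O(\log N/\log\log N)$, hence a factor $\log\log\log N$ smaller than the leading term. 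Factoring out $\log N/\log\log N$ gives
\[
\log\EX[X_{k}] = \frac{\log N}{\log\log N}\bigl(\log\log\log N + O(1)\bigr),
\]
so the exponent equals $\tfrac{\log\log\log N}{\log\log N}(1+o(1)) = \tfrac{\log\log\log N - o(\log\log\log N)}{\log\log N}$, which is the claimed lower bound after exponentiating.

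The only place requiring genuine care — and the main obstacle — is the bookkeeping of the lower-order terms: I must confirm that each correction (the Stirling $O(\log k)$, the binomial-versus-Poisson discrepancy, the $(1-p)^{N}$ factor, and the constant $\log\lambda+1$ coming from $\lambda=\Theta(1)$) perturbs $\log\EX[X_{k}]$ only by $O(\log N/\log\log N)$, equivalently perturbs the exponent only by an additive $O(1/\log\log N)$, which is $o(\log\log\log N/\log\log N)$ and is thus absorbed into the $o(\cdot)$ correction in the statement. Conceptually, this lemma complements Theorem~\ref{lem:max-degree}: $\sigma(N)$ is the high-probability maximum degree, and the estimate above shows that the number of vertices attaining (near-)maximum degree grows super-polynomially in $\log N$, guaranteeing that there are abundantly many high-degree nodes available to serve as monitors in the subsequent PoF analysis.
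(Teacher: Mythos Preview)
Your proof is correct and essentially reproduces from first principles the computation that the paper obtains by citation. The paper's proof is a two-line argument: it invokes \cite[Theorem~3.4]{frieze-karonski-2015}, which already gives
\[
\EX[X_{\sigma(N)}(\mathbb G_{N,p})] \;=\; \exp\!\left(\frac{\log N}{\log\log N}\bigl(\log\log\log N - o(1)\bigr) \;+\; O\!\left(\frac{\log N}{\log\log N + 2\log\log\log N}\right)\right),
\]
and then drops the $O(\cdot)$ correction to read off the stated lower bound.

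Your route is different in presentation but not in substance: by writing $\EX[X_k]=N\binom{N-1}{k}p^k(1-p)^{N-1-k}$, passing to the Poisson form $N\lambda^k e^{-\lambda}/k!$ (valid since $k^2/N\to 0$ and $kp\to 0$), and applying Stirling, you are effectively re-deriving the relevant piece of the Frieze--Karo\'nski theorem. The key cancellation $\log N - k\log k = \tfrac{\log N\,\log\log\log N}{\log\log N}$ is exactly what drives their asymptotic, and your bookkeeping of the lower-order terms ($k\log\lambda$, $+k$, $O(\log k)$) matches the $O(\log N/\log\log N)$ correction in the cited formula. What you gain is self-containment---a reader need not consult the reference---at the cost of a few more lines. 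What the paper gains is brevity. Both arguments land on the same expression $\tfrac{\log N}{\log\log N}(\log\log\log N + O(1))$ for $\log\EX[X_k]$, and both absorb the $O(1)$ into the $o(1)$ of the stated exponent in the same (slightly informal) way.
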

where \newar{$X_{\sigma(N)}(\mathbb G_{N,p})$ is the number of vertices of degree $\sigma(N)$} in $\mathbb G_{N,p}$. 
\begin{proof}
We borrow results from~\cite[Theorem 3.4]{frieze-karonski-2015}, where the authors show that 
\begin{equation*}
\newar{\EX[X_{\sigma(N)} (\mathbb G_{N,p})] = \exp\left(\frac{\log N}{\log \log N} \left(\log \log \log N - o(1)\right)+O\left(\frac{\log N}{\log\log N + 2\log \log\log N}\right)\right)},
\end{equation*}

We further simplify the expression in Lemma~\ref{lem:I} by eliminating the $O(.)$ term and we obtain
\begin{equation*}
\newar{\EX[X_{\sigma(N)} (\mathbb G_{N,p})] \geq N^{\frac{\log \log \log N - o(1)}{\log \log N}},}
\label{equ:order-I}
\end{equation*}
\end{proof}

Lemma~\ref{lem:I} ensures that our budget for selecting monitors $I=O(\log N)$, is (asymptotically) smaller than number of nodes with degree $\Delta(\mathbb{G}_{N,p}).$
\begin{lemma}
Let $\{ \mathbb G_{N,p} \}_{N=1}^\infty$ be a sequence of graphs with $\textstyle p = \Theta(N^{-1})$. Suppose \newar{that the number of monitors is $I=O(\log N)$.}
Then, \newar{for all $\nu$, there exists a graph $\mathbb G_{N,p}$ such that }the \newar{difference between the expected maximum coverage in $\mathbb G_{N,p}$  
and the expected number of neighbors of the monitoring nodes is bounded. Precisely, if ${\bm x}(\mathbb G_{N,p})$ is the indicator vector of the highest degree nodes in~$\mathbb G_{N,p}$, we have}
$$
\displaystyle \sum_{n \in \mathcal N} \EX \left[ \newar{\bm x_n(\mathbb G_{N,p})} | \delta_{{\mathbb G}_{N,p}}(n)| \right] -  \EX \left[F_{{\mathbb G}_{N,p}}(\bm x(\mathbb G_{N,p}),\emph{\textbf{{{e}}})} \right] \; \leq \; \nu,
$$
where $\delta_{\mathbb G_{N,p}}(n)$ is the set of neighbors of $n$ in $\mathbb G_{N,p}$ and \newar{$\nu$ is the error term and it is $\textstyle \nu = o(1)$.} 
\label{lem:coverage-in}
\end{lemma}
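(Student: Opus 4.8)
The plan is to reduce the claimed difference to an explicit \emph{overcounting} quantity and then bound its expectation by a first-moment argument that exploits the vertex-exchangeability of $\mathbb{G}_{N,p}$. Write $S := \{n \in \mathcal{N} : {\bm x}_n(\mathbb{G}_{N,p}) = 1\}$ for the set of $I$ monitoring (highest-degree) nodes and, for each $n$, let $k_n := |S \cap \delta(n)|$ count the monitors adjacent to $n$. Since the graphs are undirected, $\sum_n {\bm x}_n |\delta(n)| = \sum_{\nu \in S} |\delta(\nu)| = \sum_n |S \cap \delta(n)| = \sum_n k_n$, whereas $F_{\mathbb{G}_{N,p}}({\bm x},\textbf{e}) = \sum_n \mathbb{I}(k_n \geq 1)$. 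Hence the left-hand side of the lemma equals $\sum_n (k_n - 1)^{+}$, the number of times nodes are ``covered more than once.'' Using $(k-1)^{+} \leq \binom{k}{2}$ for every integer $k \geq 0$, I would bound this by $\sum_n \binom{k_n}{2} = \sum_{\{u,v\} \subseteq S} |\delta(u) \cap \delta(v)|$, i.e.\ the number of paths $u\!-\!w\!-\!v$ whose two endpoints are monitors.

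Next, I would take expectations term by term. Expanding over distinct triples consisting of an unordered monitor pair $\{u,v\}$ and a candidate common neighbour $w$, and invoking exchangeability of the vertices, all such terms coincide, which gives
\[
\EX\Big[\sum_n \binom{k_n}{2}\Big] = \binom{N}{2}(N-2)\, p^2\, \mathbb{P}\big(1,2 \in S \,\big|\, 3 \sim 1,\ 3 \sim 2\big).
\]
To control the conditional probability I would pass to a degree threshold. By Lemma~\ref{lem:I} there are, with high probability, far more than $I$ vertices of degree $\sigma(N) = \log N / \log\log N$, so $S \subseteq \{v : \deg(v) \geq \sigma(N)\}$; conditioning on the two edges merely lowers the effective threshold by one, and the two resulting degree events are, up to the single shared edge $\{1,2\}$, independent. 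Each then has probability $\rho := \mathbb{P}(\mathrm{Bin}(N-2,p) \geq \sigma(N)-1)$, which equals the number of near-maximal-degree vertices divided by $N$; since that count is subpolynomial, $\rho = N^{o(1)-1}$.

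Plugging $p = \Theta(N^{-1})$ into the display and using $\mathbb{P}(1,2 \in S \mid \cdots) \lesssim \rho^2$ then yields
\[
\EX\Big[\sum_n \binom{k_n}{2}\Big] = O(N^3) \cdot \Theta(N^{-2}) \cdot N^{2(o(1)-1)} = N^{-1+o(1)} = o(1),
\]
which is the asserted error term $\nu$. The main obstacle is precisely this third step: the membership event $\{1,2 \in S\}$ is data-dependent and \emph{positively} correlated with the edges to the shared neighbour $w$ (those edges raise the degrees of $1$ and $2$), so the correlation runs in the wrong direction for a naive upper bound. I would resolve this through the threshold reduction above, using the max-degree concentration of Theorem~\ref{lem:max-degree} to absorb the low-probability event on which $S \nsubseteq \{v : \deg(v) \geq \sigma(N)\}$; there $\sum_n \binom{k_n}{2} \leq I(N-1)$ holds deterministically and is multiplied by a vanishing probability. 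The remaining estimates are routine binomial-tail computations.
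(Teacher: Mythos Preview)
Your reduction to the pairwise overcount $\sum_n\binom{k_n}{2}$ is exactly the paper's strategy: the paper phrases it through the Bonferroni inequalities on the events $Z_n^i=\{n\text{ is adjacent to the }i\text{th monitor}\}$ and lands on the identical error term $\nu=\sum_{i<j}\sum_n\mathbb{E}[Z_n^iZ_n^j]$. The two arguments part ways only at the estimation of~$\nu$. The paper simply asserts $\mathbb{E}[Z_n^iZ_n^j]=\mathbb{E}[Z_n^i]\,\mathbb{E}[Z_n^j]$ ``since in the Erd\H{o}s--R\'enyi model edges are drawn independently,'' then plugs in $\mathbb{E}[Z_n^i]=\Theta(\sigma(N)/N)$ from Theorem~\ref{lem:max-degree} and Lemma~\ref{lem:I} to obtain $\nu=\Theta\!\big(I^2\sigma(N)^2/N\big)=o(1)$ in two lines. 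You correctly flag that this independence is not literal---the identity of the $i$th-highest-degree vertex depends on the entire graph, so the events $Z_n^i$ are data-dependent---and instead attack $\mathbb{E}\big[\sum_n\binom{k_n}{2}\big]$ via vertex-exchangeability plus a degree-threshold bound on $\mathbb{P}(1,2\in S\mid 3\sim1,\,3\sim2)$. What you gain is a rigorous treatment of the correlation issue the paper waves past; what you pay is the extra machinery (concentration for the number of near-maximum-degree vertices to control the ``bad'' event, the decoupling of $\deg(1)$ and $\deg(2)$ through the shared edge $\{1,2\}$, and the binomial-tail estimates you defer). Both routes reach an $o(1)$ bound of comparable strength, with the paper's being the quicker heuristic and yours the more defensible one.
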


\begin{proof}
Let $Y_{n}$ be the event that node $n$ is covered. Also, let $Z^{i}_{n}$ the event that node $n$ is covered by the \newar{$i$th} highest degree node 
(and by potentially other nodes too). Without loss of generality, assume that the nodes with lower indexes have higher degrees, i.e., $\textstyle |\delta(1)|\geq \cdots \geq |\delta(N)|$. The probability that node $n$ is covered can be written as 
\begin{equation}
\displaystyle \mathbb P(Y_{n}) = \mathbb P \left(\cup^{I}_{i = 1}Z^{i}_{n} \right).
\label{equ:union-probability}
\end{equation}

From the Bonferroni inequalities, we have 
\begin{equation}
\mathbb P(\cup^{I}_{i = 1}Z^{i}_{n}) \displaystyle \; \geq \;  \sum_{i=1}^{I}  \left(\mathbb P(Z^{i}_{n}) - \sum_{j = i}^{ I}\mathbb P(Z^{i}_{n} \cap Z^{j}_{n})\right)
\label{equ:prob-lower-bound}
\end{equation}
and
\begin{equation}
\mathbb P(\cup^{I}_{i = 1}Z^{i}_{n}) \; \leq \; \sum_{i=1}^{I} \mathbb P(Z^{i}_{n}).
\label{equ:prob-upper-bound}
\end{equation}

Define $\textstyle Y:=\sum_{i=1}^N Y_n$ as the (random) total coverage. With a slight abuse of notation, we view $Y_n$ and $Z^{i}_{n}$ as Bernoulli random binary variables that are equal to 1 if and only if the associated event occurs. 
As a result, we can substitute the probability terms with their expected values. 
%
Combining Equations~\eqref{equ:union-probability},~\eqref{equ:prob-lower-bound} and~\eqref{equ:prob-upper-bound}, \newpv{we obtain}
\begin{equation*}
\displaystyle \sum_{i=1}^{I} \left(\EX[Z^{i}_{n}] - \sum_{j = i}^{I}\EX[Z^{i}_{n} Z^{j}_{n}] \displaystyle \right) \; \leq \; \EX[Y_{n}] \; \leq \; \sum_{i=1}^{I} \EX[Z^{i}_{n}], \quad \forall n\in\mathcal N,
\end{equation*}
where we used the fact that $\mathbb P(Z^{i}_{n} \cap Z^{j}_{n}) = \mathbb P(Z^{i}_{n}) \mathbb P(Z^{j}_{n}) = \mathbb E(Z^{i}_{n}) \mathbb E(Z^{j}_{n}) = \mathbb E(Z^{i}_{n} Z^{j}_{n})$ by independence of the events $Z^{i}_{n}$ and $Z^{j}_{n}$.
%
Summing over all $n$ yields
\begin{equation*}
\displaystyle \sum_{n \in \mathcal N}\left(\sum_{i=1}^{I} \EX[Z^{i}_{n}] - \sum_{j=i}^{I}\EX[Z^{i}_{n}Z^{j}_{n}]\right) \displaystyle \; \leq  \;  \sum_{n \in \mathcal N}\EX[Y_n] \; \leq \; \sum_{n \in \mathcal N}\sum_{i=1}^{I} \EX[Z^{i}_{n}].
\end{equation*}
Changing the order of the summations, it follows that
\begin{equation*}
\displaystyle \sum^{I}_{i=1}\left(\sum_{n \in \mathcal N} \EX[Z^{i}_{n}] - \sum_{j=i}^{I}\sum_{n\in\mathcal N}\EX[Z^{i}_{n}Z^{j}_{n}]\right) \displaystyle \; \leq \; \EX[Y] \; \leq \; \sum_{i=1}^{I} \sum_{n \in \mathcal N}\EX[Z^{i}_{n}],
\end{equation*}
where we have used $\textstyle \mathbb E [ Y ]=\sum_{i=1}^N \mathbb E [ Y_n ]$. By definition of $\delta_{\mathbb G_{N,p}}(i)$, since $x_i(\mathbb G_{N,p})=1$ for $i=1,\ldots,I$, it holds that the number of nodes covered by node $i$, $\textstyle \sum_{n\in\mathcal N}\EX[Z^{i}_{n}] = \EX[|\delta_{\mathbb G_{N,p}}(i)|]$. 
Also, we remark that $\textstyle \EX[Y]= \EX[ F_{\mathbb G_{N,p}}(\bm x(\mathbb G_{N,p}), \textbf{{\rm{e}}})]$. Thus, the above sequence of inequalities is equivalent to
\begin{equation*}
\displaystyle \sum_{i = 1}^{I}\left( \EX[|\delta_{\mathbb G_{N,p}}(i)|] - \sum_{j=i}^{I}\sum_{n\in\mathcal N}\EX[Z^{i}_{n}Z^{j}_{n}] \right)\displaystyle \; \leq \;  \EX[ F_{\mathbb G_{N,p}}(\bm x(\mathbb G_{N,p}), \textbf{{{e}}})] \; \leq \; \sum_{i=1}^{I} \EX[|\delta_{\mathbb G_{N,p}}(i)|],
\end{equation*}
where, by reordering terms, we obtain
\begin{equation*}
0  \; \leq  \; \displaystyle \sum_{i = 1}^{I} \EX[|\delta_{\mathbb G_{N,p}}(i)|] - \EX[ F_{\mathbb G_{N,p}}(\bm x(\mathbb G_{N,p}), \textbf{{{e}}})] \; \leq \; \sum_{i=1}^{I}\sum_{j=i}^{I}\sum_{n\in\mathcal N}\EX[Z^{i}_{n}Z^{j}_{n}]. 
\end{equation*}
Note that $\newar{\EX \left[\bm x_n(\mathbb G_{N,p})\right] = 1, \forall n\leq I}$ since by assumption the nodes are ordered by decreasing order of their degree, so the nodes indexed from 1 to $I$ are selected in each realization of the graph. Thus,
$$
\begin{array}{rcl}
\displaystyle \sum_{i = 1}^{I} \EX[|\delta_{\mathbb G_{N,p}}(i)|] 
& = & \displaystyle \sum_{n \in \mathcal N} \EX \left[ \newar{\bm x_n(\mathbb G_{N,p})}\right] \EX \left[ | \delta_{{\mathbb G}_{N,p}}(n)| \right] \\
& = & \displaystyle \sum_{n \in \mathcal N} \EX \left[\newar{\bm x_n(\mathbb G_{N,p})}| \delta_{{\mathbb G}_{N,p}}(n)| \right],
\end{array}
$$
which yields
\begin{equation}
\displaystyle \displaystyle \sum_{n \in \mathcal N} \EX \left[\newar{\bm x_n(\mathbb G_{N,p})}| \delta_{{\mathbb G}_{N,p}}(n)| \right] - \EX[ F_{\mathbb G_{N,p}}(\bm x(\mathbb G_{N,p}), \textbf{{{e}}})] \; \leq \; \sum_{i=1}^{I}\sum_{j=i}^{I}\sum_{n\in\mathcal N}\EX[Z^{i}_{n}Z^{j}_{n}]. 
\label{equ:error-bound}
\end{equation}
%
%
The right-hand side of Equation~\eqref{equ:error-bound} is the error term and we denote it by $\textstyle \nu = \sum_{i=1}^{I}\sum_{j=i}^{I}\sum_{n\in\mathcal N}\EX[Z^{i}_{n}Z^{j}_{n}]$. This error term determines the difference between the true value of the coverage and the expected sum of the degrees of the monitoring nodes. Given that $\textstyle p = \Theta({N}^{-1})$, we can precisely evaluate the error term. First, we note that since in the Erd\H{o}s-R\'{e}nyi model edges are drawn independently, we can write $\EX[Z^{i}_{n}Z^{j}_{n}] = \EX[Z^{i}_{n}]\EX[Z^{j}_{n}]$. Using \newar{Theorem~\ref{lem:max-degree} and Lemma~\ref{lem:I}}, and given that the monitors are the highest degree nodes in any realization of the graph, we can write 
$$
\EX[Z^{i}_{n}] = \EX[Z^{j}_{n}]  = \Theta\left(\frac{1}{N}\frac{\log N}{\log\log N}\right).
$$
We thus obtain 
$$\textstyle \nu = \textstyle \Theta\left(\frac{I^2}{N}\left(\frac{\log N}{\log\log N}\right)^2\right).$$ 
By the assumption on the order of $I$, it follows that $\textstyle\lim_{N \rightarrow \infty} \nu = 0$, which concludes the proof.
\end{proof}




We now prove the following lemma which will be used in proof of the subsequent results.
\newar{\begin{lemma}
Let $X_i$ for $i=1,\dots,Q$ be $Q$ i.i.d samples from normal distribution with mean $\mu$ and standard deviation $\sigma$. Also, let $\textstyle Z = \max_{i\in\{1,\cdots, Q\}} X_i.$ It holds that
$$
\EX[Z] \leq \mu + \sigma \sqrt{2 \log Q}.
$$
\label{lem:normal-approx}
\end{lemma}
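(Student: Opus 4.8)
The plan is to bound the expected maximum of $Q$ i.i.d.\ normal random variables via a standard moment-generating-function (Chernoff-style) argument combined with Jensen's inequality. First I would reduce to the standard normal case: writing $X_i = \mu + \sigma \tilde X_i$ where $\tilde X_i$ are i.i.d.\ $\mathcal N(0,1)$, linearity gives $\EX[Z] = \mu + \sigma\, \EX[\max_i \tilde X_i]$, so it suffices to show $\EX[\max_i \tilde X_i] \leq \sqrt{2 \log Q}$.

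The core step is to exploit the exponential moment of the Gaussian. For any $t > 0$, Jensen's inequality applied to the convex function $z \mapsto e^{tz}$ yields
\begin{equation*}
\exp\!\left( t\, \EX\Big[\max_{i} \tilde X_i\Big] \right) \; \leq \; \EX\!\left[ \exp\!\Big( t \max_{i} \tilde X_i \Big) \right] \; = \; \EX\!\left[ \max_{i} e^{t \tilde X_i} \right] \; \leq \; \sum_{i=1}^{Q} \EX\!\left[ e^{t \tilde X_i} \right].
\end{equation*}
Using the standard Gaussian moment-generating function $\EX[e^{t\tilde X_i}] = e^{t^2/2}$, the right-hand side equals $Q\, e^{t^2/2}$. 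Taking logarithms and dividing by $t$ gives the bound
\begin{equation*}
\EX\Big[\max_{i} \tilde X_i\Big] \; \leq \; \frac{\log Q}{t} + \frac{t}{2}
\end{equation*}
valid for every $t > 0$.

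Finally I would optimize over $t$: the right-hand side is minimized at $t = \sqrt{2 \log Q}$, which yields $\EX[\max_i \tilde X_i] \leq \sqrt{2 \log Q}$. Substituting back the affine transformation gives $\EX[Z] \leq \mu + \sigma\sqrt{2\log Q}$, as desired. I do not anticipate a genuine obstacle here, since this is a classical inequality; the only points requiring a little care are the correct application of Jensen's inequality (needing $t>0$ so that $e^{tz}$ is convex and increasing), the interchange $\max_i e^{t\tilde X_i} = e^{t\max_i \tilde X_i}$ (valid since $e^{t(\cdot)}$ is monotone), and noting that the final optimization in $t$ is legitimate because the bound holds simultaneously for all positive $t$.
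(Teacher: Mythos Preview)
Your proposal is correct and follows essentially the same approach as the paper: both apply Jensen's inequality to $e^{tZ}$, bound the maximum by the sum, invoke the Gaussian moment-generating function, take logarithms, and optimize over $t>0$. The only cosmetic difference is that you first reduce to the standard normal case via $X_i=\mu+\sigma\tilde X_i$, whereas the paper works directly with the $\mathcal N(\mu,\sigma^2)$ MGF $\EX[e^{tX_i}]=e^{\mu t+t^2\sigma^2/2}$ and optimizes at $t=\sqrt{2\log Q}/\sigma$; the two are equivalent.
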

\begin{proof}
By Jensen's inequality,
\begin{equation*}
    \begin{array}{ccl}
    \exp(t\EX[Z]) \leq \EX[\exp(tZ)] &  =  &  \EX[\exp(t\max_{i=1,\dots, Q} X_i)] \\
    & \leq & \sum^{Q}_{i=1} \EX[\exp(t X_{i})] \\
    & = & Q \exp(\mu t + t^2\sigma^2 /2),
    \end{array}
\end{equation*}
where the last equality follows from the definition of the Gaussian moment generating function. 
Taking the logarithm of both sides of this inequality, we can obtain
$$
\EX[Z] \leq \mu + \frac{\log Q}{t} + \frac{t\sigma^2 }{2}.
$$
For the tightest upper-bound, we set $t= \sqrt{2\log Q} / \sigma$. Thus, we obtain
$$
\EX[Z] \leq \mu + \sigma \sqrt{2 \log Q}.
$$
\end{proof}
}

\begin{lemma}
Consider $\mathbb{B}_{N,M,p}$ to be a random instance of a bipartite graph on the vertex set $\mathcal N = \mathcal L \cup \mathcal R$, where \newar{$N = |\mathcal R \cup \mathcal L|$ and $M := |\mathcal R|$ and $p= O\left((M \log^2 M)^{-1}\right)$ is the probability that each edge exists (independently).} Suppose that monitoring nodes can only be chosen from the set $\mathcal L$ and that at most $I$ monitors can be selected. Then, it holds that 
\begin{equation*}
 \EX \left[ \max_{ \begin{smallmatrix}
  \bm x \in \mathcal \{0,1\}^{|\mathcal L|}: \\ \sum_{n\in\mathcal L}{\bm x_{n} = I}
 \end{smallmatrix}}F_{\mathbb B_{N,M,p}}(\bm x, \emph{\textbf{e}}) \right] = 
 I O\left(\frac{1}{{\log^{2}{M}}}\right).
\end{equation*}
\label{lem:coverage-out}
\end{lemma}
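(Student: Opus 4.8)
The goal is to bound the expected maximum coverage achievable by $I$ monitors placed on the left vertex set $\mathcal L$ of a random bipartite graph $\mathbb{B}_{N,M,p}$, where the right side $\mathcal R$ has $M$ nodes and edges appear independently with probability $p = O((M\log^2 M)^{-1})$. Since coverage is counted on $\mathcal R$ (each right node covered if at least one selected left neighbor exists), the natural first step is to bound coverage from above by the \emph{sum of degrees} of the chosen monitors into $\mathcal R$: for any $\bm x$, $F_{\mathbb B_{N,M,p}}(\bm x, \textbf{e}) \le \sum_{n \in \mathcal L} \bm x_n \, d_{\mathcal R}(n)$, where $d_{\mathcal R}(n)$ is the number of neighbors of $n$ in $\mathcal R$. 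This union-bound-style inequality is tight up to lower-order terms in the sparse regime (analogously to Lemma~\ref{lem:coverage-in}), so the problem reduces to controlling the expected sum of the $I$ largest left-degrees.

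\textbf{Plan.} First I would observe that each left-degree $d_{\mathcal R}(n)$ is a sum of $M$ independent Bernoulli$(p)$ variables, hence $\mathrm{Binomial}(M,p)$ with mean $Mp = O(\log^{-2} M)$. Because choosing the best $\bm x$ means picking the $I$ largest of these $|\mathcal L| = N - M$ degrees, I would bound the expected maximum coverage by $I$ times the expected maximum left-degree (or a small multiple thereof). The key estimate is therefore an upper bound on $\EX[\max_{n\in\mathcal L} d_{\mathcal R}(n)]$. To get this, I would pass to a Gaussian/sub-Gaussian comparison: since each $d_{\mathcal R}(n)$ is a bounded sum of independents, I can invoke Lemma~\ref{lem:normal-approx} (the maximum-of-Gaussians bound $\EX[Z]\le \mu + \sigma\sqrt{2\log Q}$) with $Q = |\mathcal L| \le N$, mean $\mu = Mp$ and variance $\sigma^2 = Mp(1-p) \le Mp$. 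This yields $\EX[\max_n d_{\mathcal R}(n)] \lesssim Mp + \sqrt{Mp}\,\sqrt{2\log N}$.

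The remaining work is to show this bound collapses to the claimed order $O(\log^{-2}M)$ per monitor. Plugging $Mp = O(\log^{-2}M)$ gives a first term of order $\log^{-2}M$ directly. For the second term, $\sqrt{Mp \cdot \log N} = O\!\big(\sqrt{\log^{-2}M \cdot \log N}\big)$; here I would use the relationship between $N$ and $M$ implied by the SBM context (Assumption~\ref{ass:pout} ties $\mathcal L$ and $\mathcal R$ to communities of comparable polynomial-in-size scale, so $\log N = \Theta(\log M)$), making the second term $O(\sqrt{\log^{-2}M \cdot \log M}) = O(\log^{-1/2}M)$. I would then need to argue this $\sqrt{\cdot}$ deviation term is dominated or absorbed — the cleanest route is to sharpen the tail bound for the Binomial maximum using a Chernoff/Poisson-tail argument rather than the crude Gaussian bound, since for such a small mean $\mu = O(\log^{-2}M)$ the maximum degree is concentrated at a small constant with high probability, and the expected maximum is genuinely $O(\log^{-2}M)$ once one accounts correctly for the factor $Mp$ scaling. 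Multiplying the per-monitor bound by $I$ then gives $I\cdot O(\log^{-2}M)$.

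\textbf{Main obstacle.} The delicate point is the transition from $\EX[\max_n d_{\mathcal R}(n)]$ to the stated $O(\log^{-2}M)$ order: the off-the-shelf Gaussian maximal inequality (Lemma~\ref{lem:normal-approx}) produces an additive $\sqrt{Mp\log N}$ term that, naively, is larger than $Mp$ itself. Resolving this requires either (i) exploiting that in the very sparse Poisson-like regime $Mp \to 0$ the maximum left-degree into $\mathcal R$ stays bounded, so the total coverage is dominated by the $O(Mp)$ mean contribution summed over $I$ monitors; or (ii) showing the $\sqrt{\cdot}$ term is lower-order once the precise $p = O((M\log^2 M)^{-1})$ scaling and the relation $\log N = \Theta(\log M)$ are inserted. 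I would handle this by a direct tail computation: bound $\mathbb P(d_{\mathcal R}(n) \ge t)$ via the Chernoff bound $(eMp/t)^t$, sum over the $|\mathcal L|$ left nodes, and integrate the tail to bound $\EX[\max_n d_{\mathcal R}(n)]$; the factor $\log^2 M$ in the denominator of $p$ is precisely what is needed to beat the $\log N$ entropy term and pin the per-monitor coverage at order $\log^{-2}M$.
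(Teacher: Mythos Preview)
Your approach is essentially the paper's: bound the coverage by the sum of the degrees (into $\mathcal R$) of the selected monitors, hence by $I$ times the maximum left-degree; note that each left-degree is $\mathrm{Binomial}(M,p)$; pass to a Gaussian approximation and invoke Lemma~\ref{lem:normal-approx} to obtain $\EX[\Delta_{\mathbb B_{N,M,p}}]\le Mp+\sqrt{Mp(1-p)}\sqrt{2\log(N-M)}$; then declare this to be $O(Mp)=O(\log^{-2}M)$ and multiply by $I$. That is exactly the paper's argument, line for line.

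Where you diverge is in your ``main obstacle'' paragraph, and you are right to pause there. The paper simply writes
\[
\EX[\Delta_{\mathbb B_{N,M,p}}]\;=\;O\!\left(Mp+\sqrt{Mp(1-p)}\sqrt{2\log(N-M)}\right)\;=\;O(Mp)
\]
without justifying the second equality; under $\log N=\Theta(\log M)$ the fluctuation term is $\Theta(\log^{-1/2}M)$, which dominates $Mp=O(\log^{-2}M)$, so the step you flag is precisely the one the paper glosses over. Your instinct to replace the Gaussian bound by a direct Chernoff/Poisson tail computation is more honest than the paper's treatment.

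That said, be aware that your proposed fix runs into the same wall. With $|\mathcal L|$ polynomial in $M$ and mean $\mu=Mp=O(\log^{-2}M)$, the union bound $|\mathcal L|\cdot(e\mu/t)^{t}<1$ first holds at $t=\Theta(\log M/\log\log M)$, not at $t=O(\log^{-2}M)$; so a maximum-degree argument alone will not deliver the per-monitor bound $O(\log^{-2}M)$. If you want to rescue the downstream use in Proposition~\ref{prop:PoF} (namely that between-community coverage is $o(1)$ when $I=O(\log N)$), you will need an argument that does not route through $\EX[\max_n d_{\mathcal R}(n)]$ --- the paper's proof does not supply one either.
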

\begin{proof}
We note that the degree of node $i$, $\delta_{ \newar{\mathbb B_{N,M,p}}}(i)$, follows a binomial distribution with mean $Mp$. Given we are interested in $N, M\rightarrow \infty$, we can approximate the binomial distribution with a normal distribution~\cite{walrand2004lecture} with mean $Mp$ and standard deviation $\sqrt{Mp(1-p)}$. Using the result of Lemma~\ref{lem:normal-approx}, we obtain
$$
\EX[\Delta_{\mathbb B_{N,M,p}}] = \newar{O}\left(Mp + \sqrt{Mp(1-p)}\sqrt{2\log{(\newar{N}-M)}}\right) = \newar{O}(M p).
$$
Using the above result combined with the assumption on $p$, we can bound the expected maximum degree of $\newar{\mathcal B}$. 
\begin{equation*}
    \EX[\Delta_{\mathbb B_{N,M,p}}] =
    \newar{O\left(\frac{1}{{\log^{2}{M}}}\right)}.
\end{equation*}
As a result, the maximum expected coverage of the $I$ monitoring nodes is upper-bounded as
\begin{equation*}
    \EX \left[ \max_{ \begin{smallmatrix}
  \bm x \in \{0,1\}^{N}: \\ \sum_{n\in\mathcal L}{x_{n} = I}
 \end{smallmatrix}}F_{\mathbb B_{N,M,p}}(\bm x,\textbf{{\rm{e}}}) \right] \; \leq \; I\EX[\Delta_{\mathbb B_{N,M,p}}]  \; = \; 
 \newar{
 IO\left(\frac{1}{\log^{2}{M}}\right)
 }.
\end{equation*}
and the proof is complete.
\end{proof}



\subsection{PoF in the Deterministic Case}

Next, we prove the main result which is the derivation of the PoF for the deterministic graph covering problem. The idea of the proof is as follows: by Lemmas~\ref{lem:I}  and~\ref{lem:coverage-in}, we are able to evaluate the coverage of each community. By Lemma~\ref{lem:coverage-out}, we upper bound the between-community coverage. \newar{In other words, based on Lemma~\ref{lem:coverage-out}, we conclude that in every instance of the coverage problem, the between-community coverage is zero (asymptotically) with high probability. Thus, the allocation of monitoring nodes is only dependant on the within-community coverage.} Using this observation, we can determine the allocation of the monitors both in the presence and absence of fairness constraints. Subsequently, we are able to evaluate the coverage in both cases. PoF can be then computed based on these two quantities, see Equation~\eqref{equ:POF_average}.

\begin{proof}[Proof of Proposition~\ref{prop:PoF}]
\newar{Let $\mathbb S_N$ be a random instance of the SBM network with size $N$. }
Consider $\textstyle \bm s(\mathbb S_N) \in \mathbb{Z}^{C}$ to be the number of allocated monitoring nodes to each of the $C$ communities, i.e., $\textstyle \bm s_{c}(\mathbb S_N) = \sum_{n\in\mathcal N_{c}}\bm x_{n}(\mathbb S_N).$
Using the result of Lemmas~\ref{lem:coverage-in} and ~\ref{lem:coverage-out}, we can measure the expected maximum coverage as
\newar
{
\begin{equation*}
\lim_{N \rightarrow \infty} \EX[\text{OPT}(\mathbb S_N, I, 0)] = \lim_{N\rightarrow \infty} \EX \left[\max_{
  \bm x(\mathbb S_N) \in \mathcal X}
 F_{\mathbb S_N}(\bm x,{\textbf{e}}) \right] =  \EX \left[\lim_{N\rightarrow \infty} \max_{
  \bm x(\mathbb S_N) \in \mathcal X}
 F_{\mathbb S_N}(\bm x,{\textbf{e}}) \right],
\end{equation*}
where the last equality is obtained by exchanging the expectation and limit. 
Using Lemma~\ref{lem:max-degree} and since the maximum degree is convergent to $d(c)$, we can exchange the limit and maximization term. Thus, we will have
\begin{equation*}
\begin{array}{ccl}
\EX \left[\lim_{N\rightarrow \infty} \max_{
  \bm x(\mathbb S_N) \in \mathcal X}
 F_{\mathbb S_N}(\bm x, {\textbf{e}}) \right] & = & \EX \left[ \max_{\bm x(\mathbb S_N) \in \mathcal X}\lim_{N\rightarrow \infty}
 F_{\mathbb S_N}(\bm x, {\textbf{e}}) \right] 
 \\ 
 & = & \EX \left [\max_{\bm {s}(\mathbb S_N) \in \mathbb Z^C} \sum_{c\in\mathcal C}{{\bm s}_{c}(\mathbb S_N)d(c) + o(1)}\right],
\end{array}
\end{equation*}
}
\newar{which given that $d(c)$ is only \newpv{dependent} on the size of the communities in $\mathbb S_{N}$ is equivalent to}
\begin{equation}
 \lim_{N\rightarrow \infty}\EX[\text{OPT}(\mathbb S_N, I, 0)] =   \max_{\bm {s}(\mathbb S_N)} \sum_{c\in \mathcal C}{\bm{s}_{c}(\mathbb S_N)d(c) + o(1)}. 
\label{equ:OPT-TOTAL}   
\end{equation}
\newar{Equation~\eqref{equ:OPT-TOTAL} suggests that for large enough $N$, the maximum coverage is only dependent on the \emph{number} of the monitoring nodes allocated to each community. Also, the allocation is the same for all random instances so we can drop the dependence of $\bm s$ on $\mathbb S_{N}$.}  In right-hand side of Equation~\eqref{equ:OPT-TOTAL}, the first term is the within-community (Lemma~\ref{lem:coverage-in}), and the second term is the between-community (Lemma~\ref{lem:coverage-out}) coverage. 

\newar{In the analysis below, all the evaluations are for large enough $N$. Therefore, we drop the $\textstyle \lim_{N\rightarrow \infty}$ for ease of notation.} According to Equation~\eqref{equ:OPT-TOTAL} the between-community coverage is negligible, compared to the within-community coverage. 
This suggests that the maximum achievable coverage will be obtained by placing all the monitoring nodes in the largest community, with the largest value of $d(c)$, where the assumption on $I$, as given in the premise of the proposition, combined with Lemma~\ref{lem:I} guarantee that such a selection is possible. Thus, we obtain
$$
\EX[\text{OPT}(\mathbb S_N, I, 0)] = I d(C) + o(1).
$$
Next, we measure $\EX[\text{OPT}^{\text{fair}}(.)]$, where in addition to optimization problem in Equation~\eqref{equ:OPT-TOTAL}, the allocation is further restricted to satisfy all the fairness constraints. 
\begin{equation}
     \frac{s_{c}}{|\mathcal N_c|}d(c)+ o(1) \geq W \quad \forall c \in \mathcal C,
    \label{equ:u-norm-absolute}
\end{equation}
in which, $o(1)$ is the term that compensates for the coverage of the nodes in other communities, and is small due to the regimes of $\textstyle p_{\newar{cc'}}^{\text{out}}, \;\forall c,c'\in \mathcal C $ and the budget $I.$ At optimality and for the maximum value of $W$, we have 
$$
 \left|{s_{c}}{|\mathcal N_c|}^{-1}d(c) - {s_{c'}}{|\mathcal N_{c'}|}^{-1}d(c')\right| \leq \delta \;\; \forall c, c' \in \mathcal C, \delta \leq \left|{d(1)}{|\mathcal N_1|}^{-1} -{d(C)}{|\mathcal N_C|}^{-1} \right|.
$$
This holds because otherwise one can remove on node from the group with higher value of $\textstyle {s_{c}}{|\mathcal N_c|}^{-1}d(c)$ to a group with less value and thus increase the normalized coverage of the worse-off group and this contradicts the fact that $W$ is the maximum possible value. This suggests that in a fair solution, the normalized coverage is \textit{almost} equal across different groups, given that $\textstyle \lim_{N \rightarrow \infty} \delta = 0$. As a result, the monitoring nodes should be such that
\begin{equation*}
W \leq \frac{s_{c}}{|\mathcal N_c|}d(c) + o(1) \leq W + \delta, \; \forall c \in\mathcal C.
\label{equ:allocation-0}
\end{equation*}
From this, it follows that
\begin{equation}
W - o(1)\leq \frac{s_{c}}{|\mathcal N_c|}d(c) \leq W + o(1).
\label{equ:allocation-1}
\end{equation}
By assumption, there must be an integral $s_c$ that satisfies the above relation. Note that if we could relax the integrality assumption, $\textstyle s_c = {W|\mathcal N_c|}{d(c)^{-1}}$. Due to the integrality constraint, and according to Equation~\eqref{equ:allocation-1}, we set $\textstyle {s_{c}}{|\mathcal N_c|}^{-1}d(c) = W + o(1) $, where the o(1) term is to account for the discretizing error, which results in $\textstyle s_{c} = {W|\mathcal N_c|}{d(c)^{-1}} + O(1)$, where $\textstyle O(1) \leq 1$ (As we can not make a higher error in rounding). Also, since $\textstyle \sum_{c\in\mathcal C} s_c = I$, we can obtain the value of $W$ as
$$
\displaystyle W =  \frac{I}{\sum_{c\in\mathcal C}\frac{|\mathcal N_c|}{d(c)}}+o(1).
$$

As a result
\begin{equation*}{s}_{c} = \frac{I}{\sum_{c \in \mathcal C}{\frac{|\mathcal N_c|}{d(c)}}}  \frac{|\mathcal N_c|}{d(c)} + O(1) \quad \forall c \in \mathcal C.
\label{equ:allocation_6}
\end{equation*}
\newar{We now define~$\textstyle \kappa := {I}\left({\sum_{c\in\mathcal C}\frac{|\mathcal N_c|}{d(c)}}\right)^{-1}$ for a compact representation.}

So far, we obtained the allocation of the monitoring nodes to satisfy the fairness constraints. This is enough to evaluate the coverage under the fairness constraints. Now, we can evaluate the PoF as defined by Equation~\eqref{equ:POF_average}. 
\begin{equation*}
\renewcommand{\arraystretch}{1.8}
\begin{array}{crcl}
& \EX[\text{OPT}(\mathbb S_N, I, 0)] & = &I d(C) \vspace{0.2cm}  \\ 
\Rightarrow &  \displaystyle - \frac{1}{\EX[\text{OPT}(\mathbb S_N, I, 0)]} & =& -\frac{1}{I \,d(C)}  \vspace{0.2cm} \\  
\Rightarrow   & \displaystyle -\frac{\EX[\text{OPT}^{\text{fair}}(\mathbb S_N, I, 0)]}{\newar{\EX[\text{OPT}(\mathbb S_N, I, 0)}]} & = &- \frac{\kappa  \sum_{c\in\mathcal C }{ \frac{|\mathcal N_c|}{d(c)}} d(c)}{I \,d(C)} \newar{- o(1)} \vspace{0.2cm}  \\ 
\Rightarrow & \displaystyle 1-\frac{\EX[\text{OPT}^{\text{fair}}(\mathbb S_N, I, 0)]}{\EX[\text{OPT}(\mathbb S_N, I, 0)]} & = &1 - \frac{\kappa \sum_{c\in\mathcal C }{  \frac{|\mathcal N_c|}{d(c)}} d(c)}{I\,d(C)} - o(1) \vspace{0.2cm}  \\ 
\Rightarrow & \displaystyle \overline{\rm{PoF}}(I, 0)& = & 1 - \frac{\kappa \sum_{c\in\mathcal C }{  |\mathcal N_c| }}{I\, d(C)} - o(1) \\
\Rightarrow & \displaystyle \overline{\rm{PoF}}(I, 0)& = & 1 - \frac{\sum_{c\in\mathcal C}{|\mathcal N_c|}}{{\sum_{c\in\mathcal C}{|\mathcal N_c|d(C)/d(c)}}} - o(1). 
\end{array}
\end{equation*}
\end{proof}

\subsection{PoF in the Robust Case}
\begin{proof}[Proof of Proposition~\ref{prop:price_of_fairness_uncertain}]
The idea of the proof is similar to Proposition~\ref{prop:PoF}, with the exception that the fair allocation of the monitoring nodes will be affected by the uncertainty. 
Consider $\textstyle \bm s$ to be the number of allocated monitoring nodes to each of the $C$ communities, i.e., $\textstyle s_{c} = \sum_{n\in\mathcal N_{c}}x_{n}.$
Using the result of lemma~\ref{lem:coverage-in}, and ~\ref{lem:coverage-out}, we can measure the expected maximum coverage as
\begin{equation*}
  \EX[\text{OPT}(\mathbb S_N, I, J)] = (I-J)d(c) + o(1).  
\end{equation*}
That is because, in the worst-case $J$ nodes fail, thus only $\textstyle (I-J)$ nodes can cover the graph. Next, we measure $\EX[\text{OPT}^{\text{fair}}(.)]$, where in addition to optimization problem in Equation~\eqref{equ:OPT-TOTAL}, the allocation is further restricted to satisfy all the fairness constraints. Given that at most $J$ nodes may fail, we need to ensure after fairness constraints are satisfied after the removal of $J$ nodes. We momentarily revisit the fairness constraint in the deterministic case. 
\begin{equation*}
     \frac{s_{c}}{|\mathcal N_c|}d(c) + o(1) \geq W \quad \forall c \in \mathcal C,
\end{equation*}
in which, $o(1)$ is the term that compensates for the coverage of the nodes in other communities, and is small due to the regimes of $p^{\text{out}}, $ and the budget $I.$
Under the uncertainty, we need to ensure that these constraints are satisfied even after $J$ nodes are removed. In other words
\begin{equation*}
     \frac{(s_{c}-J)}{|\mathcal N_c|}d(c) + o(1) \geq W \quad \forall c \in \mathcal C.
\end{equation*}
At optimality and for the maximum value of ${W}$, we have
$$
 \left|{(s_{c}-J)}{|\mathcal N_c|}^{-1}d(c) - {(s_{c'}-J)}{|\mathcal N_{c'}|}^{-1}d(c')\right| \leq \delta \;\; \forall c, c' \in \mathcal C, \delta \leq \left|{d(1)}{|\mathcal N_1|}^{-1} -{d(C)}{|\mathcal N_C|}^{-1}\right|.
$$
This holds because otherwise one can remove on node from the group with higher value of $\textstyle {s_{c}}{|\mathcal N_c|}^{-1}d(c)$ to a group with less value and thus increase the normalized coverage of the worse-off group and this contradicts the fact that $W$ is the maximum possible value. 

This suggests that in a fair solution, the normalized coverage is \textit{almost} equal across different groups, given that $\delta \rightarrow 0, \text{as } \mathcal N_c \rightarrow \infty , \forall c\in\mathcal C$. 
Following the proof of Proposition~\ref{prop:PoF}, the discretizing error can be handled by setting $ (s_{c}-J){|\mathcal N_c|}^{-1}d(c) = W + o(1) $, where the o(1) term is to account for the discretizing error. As a result
$$s_{c} = \frac{|\mathcal N_c|W}{d(c)} + J + O(1),$$ where $O(1) \leq 1$ (As we can not make a higher error in rounding). 
This suggests that a fair allocation is the one that places $J$ nodes in each community, regardless of the community size. The remaining monitors are allocated with respect to the relative size of the communities. 

Summing over all $s_c$ and since $\textstyle \sum_{c\in\mathcal C} s_c = I$ we obtain 
$$
\displaystyle W =  \frac{(I-CJ)}{\sum_{c\in\mathcal C}\frac{|\mathcal N_c|}{d(c)}}+o(1).
$$
As a result
\begin{equation*}
s_{c} = \frac{(I-CJ)}{\sum_{c \in \mathcal C}{\frac{|\mathcal N_c|}{d(c)}}}  \frac{|\mathcal N_c|}{d(c)} + \newar{J} +O(1) \quad \forall c \in \mathcal C.
\label{equ:allocation_6}
\end{equation*}


\newar{As defined in the premise of the proposition, }  $\textstyle \eta = {(I-CJ)}\left({\sum_{c\in\mathcal C}\frac{|\mathcal N_c|}{d(c)}}\right)^{-1}$. 

So far, we obtained the allocation of the monitoring nodes, to satisfy the fairness constraints.


\newar{Now, we evaluate the coverage, i.e., objective value of Problem~\eqref{prob:single_stage_final}, under the obtained fair allocation. Since the fairness constraints are satisfied under all the scenarios, the worst-case scenario is the one that results in the maximum loss in the total coverage. This corresponds to the case that $J$ nodes from the largest community $(\mathcal{N}_{C})$ fail. As a result the expected coverage can be obtained by 
$$
\EX[{\rm{OPT}^{fair}}(\mathbb S_N, I, J)] =  \sum_{c\in\mathcal C }\left({\eta \frac{|\mathcal N_c|}{d(c)}} d(c) + Jd(c) + O(1)d(c)\right) - Jd(C).
$$
}

Now, we can evaluate the PoF as defined by Equation~\eqref{equ:POF_average}. 
\begin{equation*}
    \begin{array}{crcl}
& \EX[\text{OPT}(\mathbb S_N, I, J)] & = &(I-J) d(C) \\ \Rightarrow & 
\displaystyle - \frac{1}{\EX[\text{OPT}(\mathbb S_N, I, J)]}  & = & -\frac{1}{(I-J)d(C)} \\ \Rightarrow 
& \displaystyle -\frac{\EX[\text{OPT}^{\text{fair}}(\mathbb S_N, I, J)]}{\EX[\newar{\text{OPT}(\mathbb S_N, I, J)]}}  & = & - \frac{\sum_{c\in\mathcal C }\left(\eta |\mathcal N_c| + J d(c)\right) - J d(C)}{(I-J)d(C)} - \newar{o(1)}\\  \Rightarrow
& \displaystyle 1-\frac{\EX[\text{OPT}^{\text{fair}}(\mathbb S_N, I, J)]}{\EX[\text{OPT}(\mathbb S_N, I, J)]} & = &1 - \frac{\sum_{c\in\mathcal C }{\eta |\mathcal N_c|} \newar{+ \sum_{c\in\mathcal C\setminus\{C\}}J d(c)}}{(I-J)d(C)} - o(1) \\ \Rightarrow 
& \displaystyle \overline{\rm{PoF}}(I, J) & = & 1 - \frac{\sum_{c\in\mathcal C }{\eta |\mathcal N_c| }}{(I - J)d(C)} -\frac{\newar{J \sum_{c\in\mathcal C\setminus\{C\}}d(c)}}{(I - J)d(C)} - o(1). 
\end{array}
\end{equation*}
\end{proof}


\section{Supplemental Material: Proofs of Statements in Section~\ref{sec:solution-approach}}
\label{apd:max-min-max}

\subsection{Equivalent Reformulation as a Max-Min-Max Robust Optimization Problem}
\begin{proof}[Proof of Proposition~\ref{prop:equivalent-formulation}]
Let $\bm \bar{x}$ be feasible in Problem~\eqref{prob:single_stage_final}. It follows that it is also feasible in Problem~\ref{prob:two_stage_final}.
For a fixed $\bm \bar{\xi}$, we show that
\begin{equation*}
\renewcommand{\arraystretch}{1.8}
    \begin{array}{lccl}
    \sum_{c\in\mathcal C} F_{\mathcal G, c}(\bar{\bm {x}}, \bar{\bm {\xi}}) & = &\displaystyle \max_{\bm y}  & \displaystyle \sum_{c\in\mathcal C}\sum_{n\in\mathcal N_c}\bm y_{n}  \\
    & &\text{s.t.}  & \bm y_{n}\leq \sum_{\nu\in\delta(n)}\bar{\bm\xi}_{\nu}\bar{\bm x}_{\nu} \\
    & & & \displaystyle \sum_{n\in\mathcal C}\bm y_{n} \geq W|\mathcal N_c|, \; \forall c\in\mathcal C
    \end{array}
\end{equation*}

Since $\bar{\bm x}$ is feasible in Problem~\eqref{prob:single_stage_final}, it holds that 

\begin{equation*}
\renewcommand{\arraystretch}{1.8}
    \begin{array}{lcll}
    F_{\mathcal G, c}(\bar{\bm {x}}, \bar{\bm {\xi}}) & = & \displaystyle \sum_{n\in\mathcal N_c}\bm y_{n}(\bar{\bm x},\bar{\bm \xi}) & 
    \\
    & =  & \displaystyle \sum_{n\in\mathcal N_c} \mathbb I\left(\sum_{\nu\in\delta(n)}\bar{\bm\xi}_{\nu}\bar{\bm x}_{\nu} \geq 1\right) & 
    \\
    & \geq & W|\mathcal N_c| & 
    \\
    \end{array}
\end{equation*}

We define $\textstyle \bm y^{\star}_{n} = \mathbb I\left(\sum_{\nu\in\delta(n)}\bar{\bm\xi}_{\nu}\bar{\bm x}_{\nu} \geq 1\right)$ which is feasible in Problem~\eqref{prob:two_stage_final}.
Since the choice of $\bar{\bm \xi}$
was arbitrary, we showed that given a solution to Problem~\eqref{prob:single_stage_final}, we can always construct a feasible solution to Problem~\eqref{prob:two_stage_final}, thus the objective value of the latter is at least as high. 

We now prove the contrary, i.e., given a solution to Problem~\eqref{prob:two_stage_final}, we will construct a solution to Problem~\eqref{prob:single_stage_final}. Consider $\bar{\bm x}$ to be an optimal solution to Problem~\eqref{prob:single_stage_final}. Suppose there exists $\bar{\bm \xi}\in \Xi$ such that
\begin{equation*}
\renewcommand{\arraystretch}{1.8}
    \begin{array}{lcll}
    & F_{\mathcal G, c}(\bar{\bm {x}}, \bar{\bm {\xi}}) & < & \displaystyle {|\mathcal N_c|}W
    \\
    \Rightarrow &  \sum_{n\in\mathcal N_c} \mathbb I\left(\sum_{\nu\in\delta(n)}\bar{\bm\xi}_{\nu}\bar{\bm x}_{\nu} \geq 1\right) &  < & \displaystyle {|\mathcal N_c|}W.
    \end{array}
\end{equation*}
However, since $\bar{\bm x}$ is feasible in Problem~\eqref{prob:single_stage_final}, we have that
\begin{equation*}
\renewcommand{\arraystretch}{1.8}
    \begin{array}{lcl}
    & \forall \tilde{\bm \xi} \in \Xi, \; \exists \bm{y}_{n} :  & \; \bm{y}_{n} \leq \displaystyle \sum_{\nu\in\delta(n)}\tilde{\bm{\xi}_\nu} \bar{\bm {x}_\nu}
    \\
    & & \displaystyle \sum_{n\in\mathcal N_c}\bm y_{n} \geq |\mathcal N_c|W.
    \end{array}
\end{equation*}
By construction, $\textstyle \bm y_{n} \leq \mathbb{I}\left(\sum_{\nu \in \delta(n)}\tilde{\bm \xi_{\nu}}\bar{\bm x_{\nu}} \geq 1\right), \;\forall n\in\mathcal N .$ Thus
\begin{equation*}
\renewcommand{\arraystretch}{1.8}
    \begin{array}{lcl}
    \displaystyle \sum_{c\in\mathcal C}\sum_{n\in\mathcal N_c}\mathbb{I}\left(\sum_{\nu \in \delta(n)}\tilde{\bm \xi_{\nu}}\bar{\bm x_{\nu}} \geq 1 \right) & \geq  & \sum_{c\in\mathcal C}\sum_{n\in\mathcal N_c}\bm y_{n}
    \\
    &  \geq & |\mathcal N_c|W.
    \end{array}
\end{equation*}
According to the above result, we showed that the optimal objective value of Problem~\eqref{prob:single_stage_final} is at least as high as that of Problem~\eqref{prob:two_stage_final}. This completes the proof.
\end{proof}

\subsection{Exact MILP Formulation of the K-Adaptability Problem}\label{apd:MILP_formul}

In order to derive the equivalent MILP in Theorem~\ref{thm:main}, we start by a variant of the $K$-adaptability Problem~\eqref{prob:two_stage_k_adapt}, in which we move the constraints of the inner maximization problem to the definition of the uncertainty set in the spirit of~\cite{hanasusanto2015k}. Next, we prove, via Proposition~\ref{prop:relaxed}, that by relaxing the integrality constraint on the uncertain parameters~$\bm \xi$, the problem remains unchanged, and this is the key result that enables us to provide an \emph{equivalent} MILP reformulation for Problem~\eqref{prob:two_stage_k_adapt}.

\newar{We replace $\Xi$ with a collection of uncertainty sets parameterized by vectors $\bm \ell \in \mathcal L$ as in~\cite{hanasusanto2015k}. Specifically, it follows from Proposition 2 in~\cite{hanasusanto2015k} that Problem~\eqref{prob:two_stage_k_adapt} is equivalent to
\begin{equation}
\renewcommand{\arraystretch}{1.8}
\begin{array}{cl}
\displaystyle  \max & \displaystyle \min_{\lb \in \mathcal{L}} \quad \min_{\bm \xi \in \Xi(\bm x, \bm y, \lb)} \quad \max_{\begin{smallmatrix} k \in \mathcal K:\\ \bm \ell_{k}=0\end{smallmatrix}} \; \; \sum_{n\in \mathcal N} \bm y^{k}_{n} \\
\text{s.t.} & \bm x \in\mathcal X, \; \bm y^{1},\ldots,\bm y^{K} \in \mathcal Y,
\end{array}
\label{prob:two_stage_partitioned}
\end{equation}
where $\Xi(\bm x, \bm y, \lb)$ is defined through
\begin{equation*}
\displaystyle  \Xi(\bm x, \bm y, \lb) \; : = \; \left\{\bm \xi \in \Xi: 
\arraycolsep=1.4pt\def\arraystretch{2.2}
\begin{array}{cll} 
& \displaystyle \bm y^{k}_{\bm\ell_{k}} \; > \; \sum_{\nu \in \delta(\bm\ell_{k})}\bm\xi_{\nu}\bm x_{\nu}, \; \quad \qquad \; \forall k\in\mathcal K: \bm\ell_{k}>0\\
& \displaystyle \bm y^{k}_{n} \; \leq \; \sum_{\nu \in \delta(n)} \bm \xi_{\nu} \bm x_{\nu} \;\; \forall n\in \mathcal{N}, \; \; \forall k\in\mathcal K: \bm\ell_{k} = 0\\
\end{array}
\right\},
\end{equation*}
and, with a slight abuse of notation, we use $\bm y:=\{\bm y^1,\ldots,\bm y^{K}\}$. The vector $\lb \in \mathcal L$ encodes which of the $K$ candidate covering schemes are feasible. 
By introducing $\lb$, the constraints of the inner maximization problem are absorbed in the parameterized uncertainty sets $\Xi(\bm x, \bm y, \lb)$, and in the inner-most maximization problem, any covering scheme can be chosen for which $\lb_{k} = 0$. 

Note that, for any fixed ${\bm x} \in \mathcal X$, ${\bm y} \in \mathcal Y^K$, and ${\bm \ell} \in \mathcal L$, the strict inequalities in $\Xi(\bm x, \bm y, \lb)$ can be converted to (loose) inequalities as in
\begin{equation*}
\displaystyle  \Xi(\bm x, \bm y, \lb) = \left\{\bm \xi \in \Xi: 
\arraycolsep=1.4pt\def\arraystretch{2.2}
\begin{array}{cll} 
& \displaystyle \bm y^{k}_{\lb_{k}} \; \geq \; \sum_{\nu \in \delta(\lb_{k})}\bm \xi_{\nu}\bm x_{\nu} + 1, \; \quad \quad \; \forall k\in\mathcal K: \bm\ell_{k}>0\\
& \displaystyle {\bm y}^{k}_{n} \; \leq \; \sum_{\nu \in \delta(n)} \bm \xi_{\nu} \bm x_{\nu} \;\;\;\; \forall n\in \mathcal{N}, \; \; \forall k\in\mathcal K: \bm\ell_{k} = 0 \\
\end{array}
\right\}.
\end{equation*}
This idea was previously leveraged in~\cite{rahmattalabi2018robust}. 
It follows naturally since all decision variables and uncertain parameters are binary. Next, we show that we can obtain an equivalent problem by relaxing the integrality constraint on the set $\Xi$ in the definition of $\Xi({\bm x},{\bm y},{\bm l})$. Consider the following problem
\begin{equation}
\renewcommand{\arraystretch}{1.8}
\begin{array}{cl}
\displaystyle  \max &  \displaystyle  \min_{\lb \in \mathcal{L}} \quad  \min_{\bm \xi \in \overline\Xi(\bm x, \bm y, \lb)} \quad \max_{\begin{smallmatrix} k \in \mathcal K:\\ \lb_{k}=0\end{smallmatrix}} \; \;  \sum_{n\in \mathcal N} \bm{y}^{k}_{n} \\
\text{s.t.} & \bm x \in\mathcal X,\; \bm y \in \mathcal Y^{K}, 
\end{array}
\label{prob:two_stage_partitioned_relaxed_us}
\end{equation}
where the uncertainty set is obtained by relaxing the integrality constraints on ${\bm \xi}$, i.e.,
\begin{equation*}
\displaystyle  \overline\Xi(\bm x, \bm y, \lb) = \left\{\bm \xi \in \mathcal T: 
\arraycolsep=1.4pt\def\arraystretch{2.2}
\begin{array}{cll} 
& \displaystyle \bm y^{k}_{\lb_{k}} \; \geq \; \sum_{\nu \in \delta(\lb_{k})}\bm \xi_{\nu}\bm x_{\nu} + 1, \; \quad \quad \; \forall k\in\mathcal K: \bm\ell_{k}>0\\
& \displaystyle {\bm y}^{k}_{n} \; \leq \; \sum_{\nu \in \delta(n)} \bm \xi_{\nu} \bm x_{\nu} \;\;\;\; \forall n\in \mathcal{N}, \; \; \forall k\in\mathcal K: \bm\ell_{k} = 0 \\
\end{array}
\right\}.
\end{equation*}
}

\newar{
\begin{proposition}
Under Assumption~\ref{assumption:XiandY}, Problems~\eqref{prob:two_stage_partitioned} and~\eqref{prob:two_stage_partitioned_relaxed_us} are equivalent.
\label{prop:relaxed}
\end{proposition}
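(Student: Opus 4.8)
The plan is to exploit the fact that, in both Problem~\eqref{prob:two_stage_partitioned} and Problem~\eqref{prob:two_stage_partitioned_relaxed_us}, the innermost objective $\max_{k\in\mathcal K:\,\bm\ell_k=0}\sum_{n\in\mathcal N}\bm y^k_n$ does \emph{not} depend on the uncertain vector $\bm\xi$. Consequently, for any fixed $\bm x\in\mathcal X$ and $\bm y\in\mathcal Y^K$, the inner minimization over $\bm\xi$ is a pure feasibility test: its value equals $g(\bm y,\bm\ell):=\max_{k:\bm\ell_k=0}\sum_n\bm y^k_n$ whenever the corresponding uncertainty set is nonempty, and $+\infty$ (vacuously) otherwise. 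Both problems therefore reduce to
\[
\max_{\bm x\in\mathcal X,\ \bm y\in\mathcal Y^K}\ \min_{\bm\ell\in\mathcal L:\ \Xi(\bm x,\bm y,\bm\ell)\neq\emptyset}\ g(\bm y,\bm\ell),
\]
with $\Xi(\bm x,\bm y,\bm\ell)$ replaced by $\overline\Xi(\bm x,\bm y,\bm\ell)$ in the relaxed case. Since $g$ is identical in both, it suffices to prove that the two families of ``active'' indices coincide, i.e., that $\Xi(\bm x,\bm y,\bm\ell)\neq\emptyset \iff \overline\Xi(\bm x,\bm y,\bm\ell)\neq\emptyset$ for every $\bm x,\bm y,\bm\ell$. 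One inclusion is immediate: because $\Xi=\{0,1\}^N\cap\mathcal T\subseteq\mathcal T$ and the remaining defining inequalities are the same, we have $\Xi(\bm x,\bm y,\bm\ell)\subseteq\overline\Xi(\bm x,\bm y,\bm\ell)$, so nonemptiness of the former implies that of the latter.

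The substance of the proof is the converse, which I would establish by an explicit rounding of a fractional certificate to a binary one. Fix $\bm\ell$ and suppose $\bm\xi^\star\in\overline\Xi(\bm x,\bm y,\bm\ell)$, where (as is standard when relaxing the integrality of $\bm\xi$) we take $0\le\bm\xi^\star\le\bm e$. Let $S:=\{\nu:\bm x_\nu=1\}$ be the set of chosen monitors. For every $k$ with $\bm\ell_k>0$ the defining constraint reads $\sum_{\nu\in\delta(\bm\ell_k)}\bm x_\nu\bm\xi^\star_\nu\le \bm y^k_{\bm\ell_k}-1$; since $\bm y^k_{\bm\ell_k}\in\{0,1\}$ and the left-hand side is nonnegative, this forces $\bm y^k_{\bm\ell_k}=1$ and $\bm\xi^\star_\nu=0$ for every $\nu\in\delta(\bm\ell_k)\cap S$. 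Collecting these coordinates into $Z:=\bigcup_{k:\bm\ell_k>0}\big(\delta(\bm\ell_k)\cap S\big)$, I would define the candidate binary point $\hat{\bm\xi}$ by $\hat{\bm\xi}_\nu:=0$ for $\nu\in Z$ and $\hat{\bm\xi}_\nu:=1$ otherwise.

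It then remains to verify that $\hat{\bm\xi}\in\Xi(\bm x,\bm y,\bm\ell)$, which I would split into three checks. First, the ``violation'' inequalities ($\bm\ell_k>0$) hold because every $\nu\in\delta(\bm\ell_k)\cap S$ lies in $Z$, so the left-hand side is exactly $0=\bm y^k_{\bm\ell_k}-1$. Second, since $\bm\xi^\star=0$ on $Z$ and $\bm\xi^\star\le\bm e=\hat{\bm\xi}$ off $Z$, we have $\hat{\bm\xi}\ge\bm\xi^\star$ coordinatewise; together with $\bm\xi^\star\in\mathcal T$ and the fact that $\mathcal T$ is \emph{upward closed} (Assumption~\ref{assumption:XiandY}), this yields $\hat{\bm\xi}\in\mathcal T$, so $\hat{\bm\xi}\in\{0,1\}^N\cap\mathcal T=\Xi$. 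Third, the ``coverage'' inequalities ($\bm\ell_k=0$) are preserved under this upward rounding, since $\sum_{\nu\in\delta(n)}\bm x_\nu\hat{\bm\xi}_\nu\ge\sum_{\nu\in\delta(n)}\bm x_\nu\bm\xi^\star_\nu\ge\bm y^k_n$ for all $n$. Hence $\hat{\bm\xi}$ witnesses $\Xi(\bm x,\bm y,\bm\ell)\neq\emptyset$, which completes the equivalence of the two index families and therefore of the two problems.

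The main obstacle I anticipate is precisely the interaction between the two opposing families of constraints during rounding: the ``violation'' inequalities are upper bounds that forbid raising certain coordinates, whereas the ``coverage'' inequalities and membership in $\mathcal T$ are lower-bound/monotone requirements that reward raising coordinates. The construction reconciles them by zeroing out exactly the coordinates pinned down by the upper bounds and maximizing all others, and the upward-closedness of $\mathcal T$ is what guarantees that this aggressive rounding cannot leave the uncertainty set. A secondary point to state carefully is the reduction in the first step -- namely that the $\bm\xi$-independence of the objective legitimately collapses the inner $\min$ to a feasibility check -- together with the convention for $\bm\ell$ admitting no feasible candidate scheme, which is treated identically in both problems and hence does not affect the equivalence.
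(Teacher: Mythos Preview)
Your proposal is correct and follows essentially the same approach as the paper: reduce to showing that $\Xi(\bm x,\bm y,\bm\ell)\neq\emptyset\iff\overline\Xi(\bm x,\bm y,\bm\ell)\neq\emptyset$ (since the inner objective is $\bm\xi$-independent), and for the nontrivial direction round a fractional certificate upward, using upward-closedness of~$\mathcal T$ to stay feasible. The only cosmetic difference is the rounding rule---the paper sets $\hat{\bm\xi}_n:=\lceil\tilde{\bm\xi}_n\rceil$, whereas you set $\hat{\bm\xi}_\nu:=\mathbb I(\nu\notin Z)$---but both constructions yield a binary vector dominating $\bm\xi^\star$ that is zero exactly where the violation constraints force it, so the verifications are identical.
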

\begin{proof}
Let $\bm x \in \mathcal X$, $\bm y \in \mathcal Y^K$, and $\lb\in \mathcal L$. It suffices to show that
\begin{equation*}
\min_{\bm \xi \in \Xi(\bm x, \bm y, \lb)} \; \; \max_{\begin{smallmatrix} k \in \mathcal K:\\ \lb_{k}=0\end{smallmatrix}} \; \; \sum_{n\in \mathcal N} \bm{y}^{k}_{n}
\qquad \text{ and } \qquad 
\min_{\bm \xi \in \overline\Xi(\bm x, \bm y, \lb)} \; \; \max_{\begin{smallmatrix} k \in \mathcal K:\\ \lb_{k}=0\end{smallmatrix}} \; \; \sum_{n\in \mathcal N} \bm y^{k}_{n}
\end{equation*}
are equivalent. Observe that the these problems have the same objective function. Thus, the two problems have the same optimal objective value if and only if they are either both feasible or both infeasible. As a result, it suffices to show that $\Xi(\bm x, \bm y, \lb)$ is empty if and only if $\overline\Xi(\bm x, \bm y, \lb)$ is empty. Naturally, if $\overline\Xi(\bm x, \bm y, \lb)=\emptyset$ then $\Xi(\bm x, \bm y, \lb)=\emptyset$ since $\mathcal T$ is the linear programming relaxation of~$\Xi$. Thus, it suffices to show that the converse also holds, i.e., that if 
$\overline \Xi(\bm x, \bm y, \lb) \neq \emptyset$, then also $\Xi(\bm x, \bm y, \lb) \neq \emptyset$.

To this end, suppose that $\overline \Xi(\bm x, \bm y, \lb) \neq \emptyset$ and let $\tilde{\bm \xi} \in \overline \Xi(\bm x, \bm y, \lb)$. Then, $\tilde {\bm \xi}$ is such that
\begin{equation}
\renewcommand{\arraystretch}{1.8}
\begin{array}{l}
 \tilde {\bm \xi} \in \mathcal T, \\
 \displaystyle \bm y^{k}_{\lb_{k}} \; \geq \; \sum_{\nu \in \delta(\lb_{k})} \tilde{\bm \xi}_{\nu}\bm x_{\nu} + 1 \; \; \forall k \in \mathcal{K} : \lb_{k}>0, \\ 
 \displaystyle \bm y^{k}_{n} \; \leq \; \sum_{\nu \in \delta(n)} \tilde{\bm \xi}_{\nu}\bm x_{\nu} \; \; \forall n\in \mathcal N, \;  \forall k \in \mathcal{K} : \lb_{k}= 0. 
\end{array}
\label{eq:interm_implication0}
\end{equation}
Next, define $\hat{\bm\xi}_{n} := \lceil \tilde{\bm\xi}_{n} \rceil$ $\forall n\in \mathcal N$. We show that~$\hat{\bm \xi} \in \Xi(\bm x, \bm y,\lb)$. First, note that $\hat{\bm \xi} \geq \tilde{\bm \xi}$ and by Assumption~\ref{assumption:XiandY}, it follows that $\hat{\bm \xi} \in \mathcal T$. Moreover, by construction, $\hat{\bm \xi} \in \{0,1\}^N$. Thus, it follows that $\hat{\bm \xi}\in \Xi$. Next, we show that the constructed solution $\hat{\bm \xi}$ also satisfies the remaining constraints in $\Xi(\bm x, \bm y, \lb)$. Fix $k \in \mathcal K$ such that $\lb_{k} > 0$. Then, from~\eqref{eq:interm_implication0} it holds that
\begin{equation*}
\renewcommand{\arraystretch}{1.8}
\begin{array}{cll}
&&  \displaystyle \bm y^{k}_{\lb_{k}} \; \geq \; \sum_{\nu \in \delta(\lb_{k})} \tilde{\bm \xi}_{\nu}\bm x_{\nu} + 1 \\
 &\Rightarrow & \bm y_{\lb_{k}}^k=1 \; \text{ and } \; \tilde{\bm \xi}_{\nu}\bm x_{\nu} = 0 \quad \forall \nu \in \delta(\lb_{k})  \\
& \Rightarrow &  \bm y_{\lb_{k}}^k=1 \; \text{ and } \; \tilde{\bm \xi}_{\nu} = 0 \quad  \forall \nu \in \delta(\lb_{k}): \bm x_{\nu} = 1 \\
& \Rightarrow &  \bm y_{\lb_{k}}^k=1 \; \text{ and } \; \hat{\bm \xi}_{\nu} = 0 \quad \forall \nu \in \delta(\lb_{k}): \bm x_{\nu} = 1  \\
& \Rightarrow & \bm{y}^{k}_{\lb_{k}} \; \geq \; \sum_{\nu \in \delta(\lb_{k})} \hat{\bm \xi}_{\nu}\bm{x}_{\nu} + 1,
\end{array}
\end{equation*}
where the first and second implication follow since ${\bm y}$ and ${\bm x}$ are binary, respectively, and the third implication holds by definition of $\hat{\bm \xi}$, 

Next, fix $k \in \mathcal K$ such that $\lb_{k} = 0$. Then,~\eqref{eq:interm_implication0} yields
\begin{equation*}
\renewcommand{\arraystretch}{1.8}
\begin{array}{cll}
&&  \bm y^{k}_{n} \leq \sum_{\nu \in \delta(n)} \tilde{\bm{\xi}}_{\nu} \bm{x}_{\nu} \;\; \forall n \in \mathcal{N}  \\
& \Rightarrow & \bm{y}^{k}_{n} \leq \sum_{\nu \in \delta(n)} \hat{\bm\xi}_{\nu} \bm{x}_{\nu} \;\; \forall n \in \mathcal{N},
\end{array}
\end{equation*}
which follows by definition of $\hat{\bm \xi}$. We have thus constructed $\hat{\bm \xi} \in \Xi(\bm x, \bm y, \lb)$ and therefore conclude that $\Xi(\bm x, \bm y, \lb)\neq \emptyset$. Since the choice of $\bm x \in \mathcal X$, $\bm y \in \mathcal Y^K$, and $\lb\in \mathcal L$ was arbitrary, the claim follows.
\end{proof}

Proposition~\ref{prop:relaxed} is key to leverage existing literature to reformulate Problem~\eqref{prob:two_stage_k_adapt} as an MILP. The reformulation is based on~\cite{hanasusanto2015k,rahmattalabi2018robust}. 
}

\begin{proof}[Proof of Theorem~\ref{thm:main}]


Note that the objective function of the Problem~\eqref{prob:two_stage_partitioned} is identical to
\begin{equation*}
\renewcommand{\arraystretch}{1.8}
\begin{array}{cll}
\displaystyle \min_{\lb \in \mathcal{L}} & \min_{\bm \xi \in \overline{\Xi}(\bm x, \bm y ,\lb)} & \left[\max_{\bm \lambda \in \Delta_{K}(\lb)} \; \sum_{k\in \mathcal{K}}\bm \lambda_{k}\sum_{n\in \mathcal N} \bm y^{k}_{n} \right],
\end{array}
\end{equation*}
where $\Delta_{K}(\lb) := \{\bm{\lambda} \in \mathbb{R}^{K}_{+}: \textbf{e}^\top \bm\lambda  = 1, \; \bm\lambda_{k} = 0 \;\; \forall k\in \mathcal K : \lb_{k} \neq 0\}.$ 
We define 
$\partial \mathcal{L} := \{ \lb \in \mathcal{L}: \lb \ngtr \bm 0 \}$, and  $\mathcal{L}_{+} := \{\lb \in \mathcal{L} : {\bm \ell} > \bm 0\}$. 
We remark that $\Delta_{K}(\bm \ell) = \emptyset$ if and only if
$\lb > \bm 0$. If~$\overline{\Xi}(\bm x, \bm y, \lb) =\emptyset$ for all $\lb \in \mathcal{L}_{+}$, then the problem is equivalent to
\begin{equation*}
\renewcommand{\arraystretch}{1.8}
\begin{array}{cll}
\displaystyle \min_{\lb \in \partial\mathcal{L}} & \displaystyle \min_{\bm \xi \in \overline{\Xi}(\bm x, \bm y ,\lb)} & \displaystyle \left[\max_{\bm \lambda \in \Delta_{K}(\lb)} \; \sum_{k\in \mathcal{K}}\bm \lambda_{k}\sum_{n\in \mathcal N} \bm y^{k}_{n} \right].
\end{array}
\end{equation*}
By applying the classical min-max theorem, we obtain
\begin{equation*}
\renewcommand{\arraystretch}{1.8}
\begin{array}{cll}
\displaystyle \min_{\lb \in \partial\mathcal{L}} & \displaystyle \max_{\bm \lambda \in \Delta_{K}(\lb)} & \displaystyle\min_{\bm \xi \in \overline{\Xi}(\bm x, \bm y ,\lb)} \; \sum_{k\in \mathcal{K}}\bm \lambda_{k}\sum_{n\in \mathcal N} \bm y^{k}_{n}.
\end{array}
\end{equation*}
This problem is also equivalent to
\begin{equation*}
\renewcommand{\arraystretch}{1.6}
\begin{array}{cll}
\displaystyle \max_{\bm \lambda(\lb) \in \Delta_{K}(\lb)} & \displaystyle\min_{\lb \in \partial\mathcal{L}} & \displaystyle \min_{\bm \xi \in \overline{\Xi}(\bm x, \bm y , \lb)} \; \sum_{k\in \mathcal{K}}\bm \lambda_{k}(\lb)\sum_{n\in \mathcal N} \bm y^{k}_{n}.
\end{array}
\end{equation*}
If on the other hand $\overline{\Xi}(\bm x, \bm y ,\lb) \neq \emptyset$ for some $\lb \in \mathcal{L}_{+}$, the objective of Problem~\eqref{prob:two_stage_partitioned} evaluates to $-\infty$.

Using the above results, we can write Problem~\eqref{prob:two_stage_partitioned} in epigraph form as 
\begin{equation}
\renewcommand{\arraystretch}{1.6}
\begin{array}{cll}
\max \displaystyle & \tau & \\
\text{s.t.} & {\bm x \in \mathcal{X}}, \; \bm y \in \mathcal Y^{K},\; \tau \in \mathbb{R}, \; \bm \lambda(\lb) \in \Delta_{K}(\lb), \; \lb \in \partial \mathcal{L} &
\\
& \tau \; \leq \; \displaystyle \sum_{k \in \mathcal{K}} \bm\lambda_{k}(\lb)\sum_{n\in \mathcal N} \bm y^{k}_{n} \quad \forall \lb \in \partial \mathcal{L} : \; \Xi(\bm x, \bm y , \lb) \neq \emptyset &\\
&\overline{\Xi}(\bm x, \bm y, \lb) = \emptyset \quad \forall \lb \in \mathcal{L}_{+}.&
\end{array}
\label{prob:two_stage_epi}
\end{equation}
We begin by reformulating the semi-infinite constraint associated with $\lb\in\partial \mathcal L$ in Problem~\eqref{prob:two_stage_epi}. To this end, fix $\lb\in\partial \mathcal L$ and consider the linear program
\begin{equation*}
\renewcommand{\arraystretch}{1.8}
\begin{array}{cll}
\min & 0\\
\text{s.t.} &  0 \; \leq \; \bm \xi_{n} \leq 1 \;\; \forall n \in \mathcal N\\
& \bm{A}^{\top} \bm \xi \; \geq \;  \bm b\\
& \displaystyle \bm y^{k}_{\lb_{k}} \; \geq \; \sum_{\nu \in \delta(\lb_{k})}\bm \xi_{\nu}\bm x_{\nu} + 1 \quad \forall k \in \mathcal{K} \; : \; \lb_{k}>0 \\
& \displaystyle \bm y^{k}_{n} \; \leq \; \sum_{\nu \in \delta(n)} \bm \xi_{\nu}\bm x_{\nu} \quad \forall n\in \mathcal N, \; \forall k \in \mathcal{K} \; : \; \lb_{k} = 0,
\end{array}
\end{equation*}
whose dual reads
\begin{equation*}
\renewcommand{\arraystretch}{1.6}
\begin{array}{cll}
\max & \displaystyle  -\textbf{e}^{\top} \bm\theta(\lb)  + \bm{b}^{\top}\bm{\alpha}(\lb) - \sum_{\begin{smallmatrix}
k \in\mathcal {K}\\
\lb_{k} \neq 0
\end{smallmatrix}} \left(\bm y^{k}_{\lb_{k}} -1 \right)\bm \nu_{k}(\lb)+ \sum_{\begin{smallmatrix}
k\in\mathcal {K} \\
\lb_{k} = 0
\end{smallmatrix}
}
\sum_{n\in\mathcal N}{\bm y^{k}_{n}}\bm \beta_{n}^{k}(\lb) & \\
\text{s.t.} & \bm{\theta}(\lb) \in \mathbb{R}^{N}_{+}, \; \bm{\alpha}(\lb) \in \mathbb{R}^{R}_{+}, \; \bm \beta^{k}(\lb) \in \mathbb{R}^{N}_{+},\; \forall k \in \mathcal{K}, \; \bm \nu(\lb) \in \mathbb{R}^{K}_{+} & \\
& 
\displaystyle \bm \theta_{n}(\lb)  \; \leq \; \bm A^{\top}\bm{\alpha}(\lb) +  \sum_{\begin{smallmatrix}
k\in\mathcal {K}\\
\lb_{k} \neq 0
\end{smallmatrix}} 
\sum_{\nu\in\delta(\lb_{k})}{\bm x_{\nu}}\bm \nu_{k}(\lb) -  \displaystyle\sum_{\begin{smallmatrix}
k \in \mathcal {K}\\
\lb_{k} = 0
\end{smallmatrix}}\sum_{ \nu\in\delta(n)}{\bm x_{\nu}}\bm{\beta}_{n}^{k}(\lb)  \quad \forall n \in \mathcal N.
\end{array}
\end{equation*}
In Problem~\eqref{prob:two_stage_epi} the constraint associated with each $\bm \ell \in \partial \mathcal{L}$ is satisfied if and only if the objective value of the above dual problem is greater than $\tau - \textstyle \sum_{k\in\mathcal{K}}\bm \lambda_{k}(\lb)\sum_{n\in \mathcal N} \bm y^{k}_{n}$. This follows since the dual is always feasible. Therefore, either the dual is unbounded in which case the primal is infeasible, i.e., $\Xi(\bm x, \bm y, \lb) = \emptyset$, and the constraint is trivial. Else, by strong duality, the primal and dual must have the same objective value (zero). As a result, the constraints in Problem~\eqref{prob:two_stage_epi} associated with each $\bm \ell \in \partial \mathcal{L}$ can be written as
\begin{equation*}
\renewcommand{\arraystretch}{1.8}
\begin{array}{cll}
& \displaystyle \tau \leq - \textbf{{{e}}}^{\top}\bm\theta(\lb) + \bm{b}^{\top}\bm{\alpha}(\lb) - \sum_{\begin{smallmatrix}
k\in\mathcal {K}\\
\lb_{k} \neq 0
\end{smallmatrix}}\left(\bm{y}^{k}_{\lb_{k}} -1\right)\bm \nu_{k}(\lb)+ \sum_{\begin{smallmatrix}
k\in\mathcal {K} \\
\lb_{k} = 0
\end{smallmatrix}
}
\sum_{n\in\mathcal N}{\bm{y}^{k}_{n}}\bm\beta_{n}^{k}(\lb)+ \sum_{k\in\mathcal{K}}\bm\lambda_{k}(\lb)\sum_{n\in \mathcal N} \bm{y}^{k}_{n} & \\
& 
\displaystyle \bm \theta_{n}(\lb)  \; \leq \; \bm A^{\top}\bm{\alpha}(\lb) +  \sum_{\begin{smallmatrix}
k\in\mathcal {K}\\
\lb_{k} \neq 0
\end{smallmatrix}} 
\sum_{\nu\in\delta(\lb_{k})}{\bm x_{\nu}}\bm \nu_{k}(\lb) -  \displaystyle\sum_{\begin{smallmatrix}
k \in \mathcal {K}\\
\lb_{k} = 0
\end{smallmatrix}}\sum_{ \nu\in\delta(n)}{\bm x_{\nu}}\bm{\beta}_{n}^{k}(\lb)  \quad \forall n \in \mathcal N.
\end{array}
\end{equation*}

Finally, the last constraint in Problem~\eqref{prob:two_stage_epi} is satisfied if the linear program
\begin{equation*}
\renewcommand{\arraystretch}{1.8}
\begin{array}{cll}
\min & \; \displaystyle 0 & \\
\text{s.t.} & \; 0  \; \leq \; \bm \xi_{n} \leq 1 \quad \forall n\in \mathcal N \\
& \bm A \bm \xi \; \geq \; \bm{b} \\
& \; \displaystyle \bm y^{k}_{\lb_{k}} \; \geq \; \sum_{\nu \in \delta(\lb_{k})}\bm \xi_{\nu}\bm x_{\nu} + 1 \quad \forall k \in \mathcal{K} \; : \; \lb_{k} \neq 0
\end{array}
\end{equation*}
is infeasible. Using strong duality, this occurs if the dual problem
\begin{equation*}
\renewcommand{\arraystretch}{1.8}
\begin{array}{cll}
\max & \displaystyle 
- \textbf{{{e}}}^{\top}\bm\theta(\bm l) + \bm{\alpha}(\lb)^{\top}\bm{b}  - \sum_{\begin{smallmatrix}
k\in\mathcal {K}\\
\lb_{k} \neq 0
\end{smallmatrix}}\left(\bm y^{k}_{\lb_{k}} - 1 \right)\bm \nu_{k}(\lb)\\
\text{s.t.} & \bm{\theta}(\lb)\in \mathbb{R}_{+}^{N},\; {\bm \alpha}(\lb) \in \mathbb{R}^{R}_{+}, \; \bm \nu(\lb) \in \mathbb{R}_{+}^{K}\\ & 
\bm \theta_{n}(\lb)  \; \leq \; \bm A^{\top}\bm{\alpha}(\lb) + \displaystyle \sum_{\begin{smallmatrix}
k\in\mathcal {K}\\
\lb_{k} \neq 0
\end{smallmatrix}}\sum_{\nu\in\delta(\lb_{k})}{\bm x_{\nu}}\bm \nu_{k}(\lb) 
\quad \forall n \in \mathcal{N}
\end{array}
\end{equation*}
is unbounded. Since the feasible region of the dual problem constitutes a cone, the dual problem is
unbounded if and only if there is a feasible solution with an objective value of 1 or more. \end{proof}

\section{Supplemental Material: Bender's Decomposition}
\label{sec:Benders-problem-master-sub}

We do not detail all the steps of the Bender's decomposition algorithm. We merely provide the initial relaxed master problem and the subproblems used to generate the cuts. We refer the reader to e.g.,~\cite{bertsimas1997introduction} for more details. 

\textbf{Relaxed Master Problem.}
Initially, the relaxed master problem only involves the binary variables of the Problem~\eqref{prob:K-adaptability_MILP} and is expressible as
\begin{equation*}
\max \; \left\{  \tau \; : \; \tau \in \mathbb R, \; \bm x \in \mathcal X, \; \bm y^{1},\ldots,\bm y^{K} \in \mathcal Y \right\}.
\end{equation*}


\textbf{Subproblems. } As discussed in Section~\ref{sec:solution-approach}, Problem~\eqref{prob:K-adaptability_MILP} decomposes by ${\bm \ell}$. Depending on the index~${\bm \ell}$ of the subproblem, there are two types of subproblems to consider. If $\lb \in \mathcal L_0$, the subproblem is given by
%
\begin{equation*}
\renewcommand{\arraystretch}{1.6}
\begin{array}{cl}
\tag{$\mathcal Z_{0}(\lb)$}
\min & 0  \\
\text{s.t.} & \bm{\theta}(\lb),\; \bm \beta^{k}(\lb) \in \mathbb{R}^{N}_{+},\;  \bm{\alpha}(\lb) \in \mathbb{R}^{R}_{+},\;  \bm\nu(\lb) \in \mathbb{R}^{K}_{+},\; \bm \lambda(\lb) \in \Delta_{K}(\lb) 
\\
& \displaystyle \tau \; \leq \;  - \displaystyle {\textbf{{e}}}^{\top} {\bm\theta}(\bm \ell) + {\bm b}^{\top} {\bm \alpha}({\bm \ell}) - \sum_{
\begin{smallmatrix} k\in\mathcal K : \\ {\bm \ell}_{k} \neq 0 \end{smallmatrix}} \left( {\bm y}^{k}_{{\bm \ell}_k} -1 \right) {\bm \nu}_k(\bm \ell) + \ldots  \\
& \qquad \qquad \qquad \ldots + \sum_{ \begin{smallmatrix} k \in \mathcal K : \\ {\bm \ell}_k = 0 \end{smallmatrix} } \sum_{n\in\mathcal N}{\bm y}^k_n {\bm \beta}_n^k({\bm \ell}) + \sum_{k\in\mathcal K} {\bm \lambda}_k({\bm \ell}) \sum_{n\in \mathcal N} {\bm y}^k_n \\
%
\\
& \bm \theta_{n}(\lb) \; \leq \; \bm A^{\top}\bm{\alpha}(\lb) + \displaystyle\sum_{\begin{smallmatrix}
k\in\mathcal {K}\\
\lb_{k}\neq 0
\end{smallmatrix}}\sum_{\nu\in\delta(l_{k})}{\bm x_{\nu}}\bm \nu_{k}(\lb) - \displaystyle \sum_{\begin{smallmatrix}
k\in\mathcal {K}\\
\lb_{k} = 0
\end{smallmatrix}}\sum_{\nu\in\delta(n)}{\bm x_{\nu}}\bm \beta_{n}^{k}(\lb) \;\; \forall n \in \mathcal N. 
\end{array}
\end{equation*}
%
In a similar fashion, we define the subproblem associated with $\lb \in \mathcal L_{+}$, given by
\begin{equation*}
\renewcommand{\arraystretch}{1.6}
\begin{array}{cl}
\tag{$\mathcal Z_{+}(\lb)$}
\min & 0 \\
\text{s.t.} & \bm{\theta}(\lb)\in \mathbb{R}_{+}^{N},\; \bm{\alpha}(\bm l) \in \mathbb{R}_{+}^{R}, \; \bm \nu(\bm l) \in \mathbb{R}_{+}^{K} \\
& 1 \leq -\textbf{{{e}}}^{\top}\bm \theta(\bm l) + \bm {b}^{\top}\bm{\alpha}(\lb) - \sum_{\begin{smallmatrix}
k\in\mathcal {K}\\
\lb_{k} \neq 0
\end{smallmatrix}} \left(\bm y^{k}_{\lb_{k}} - 1 \right) \bm \nu_{k}(\lb) \\
& \bm \theta_{n}(\lb) \leq  \bm A^{\top}\bm{\alpha}(\lb) + \displaystyle \sum_{\begin{smallmatrix}
k\in\mathcal {K}     \\
\lb_{k} \neq 0
\end{smallmatrix}}
\sum_{\nu\in\delta(\lb_{k})}{\bm x_{\nu}}\bm \nu_{k}(\lb)  \quad \forall n \in \mathcal N. 
\end{array}
\end{equation*}

\end{document}